\newcommand{\arXiv}[1]{\href{http://arxiv.org/abs/#1}{\tt arXiv:\nolinkurl{#1}}}
\newcommand{\googlebooks}[1]{(preview at \href{http://books.google.com/books?id=#1}{google books})}
\theoremstyle{plain}
\newtheorem{thm}{Theorem}[section]
\newtheorem*{thm*}{Theorem}
\newtheorem{cor}[thm]{Corollary}
\newtheorem{lem}[thm]{Lemma}
\newtheorem{fact}[thm]{Fact}
\newtheorem{prop}[thm]{Proposition}
\newtheorem{quest}[thm]{Question}
\theoremstyle{definition}
\newtheorem{defn}[thm]{Definition}
\newtheorem{nota}[thm]{Notation}
\newtheorem{exs}[thm]{Examples}
\newtheorem{rem}[thm]{Remark}
\DeclareMathOperator{\conc}{conc}
\DeclareMathOperator{\conv}{conv}
\DeclareMathOperator{\id}{id}
\DeclareMathOperator{\spann}{span}
\DeclareMathOperator{\Tr}{Tr}
\DeclareMathOperator{\tr}{tr}
\tikzstyle{shaded}=[fill=red!10!blue!20!gray!30!white]
\tikzstyle{unshaded}=[fill=white]
\tikzstyle{empty box}=[circle, draw, thick, fill=white, opaque, inner sep=2mm]
\tikzstyle{annular}=[scale=.7, inner sep=1mm, baseline]
\tikzstyle{rectangular}=[scale=.75, inner sep=1mm, baseline]
\newcommand{\conditional}[1]{
\begin{tikzpicture}[rectangular]
	\clip (2,1) --(-2,1) -- (-2,-1) -- (2,-1);
	\filldraw[shaded] (-1.2,1.5)--(-1.2,-1.5)--(-.8,-1.5)--(-.8,1.5);
	\draw (.6,1.5)--(.6,-1.5);
	\draw (1,.5) arc (180:0:.3cm) -- (1.6,-.5);
	\draw (1,-.5) arc (-180:0:.3cm);
	\draw [ultra thick] (2,1) --(-2,1) -- (-2,-1) -- (2,-1)--(2,1);
	\filldraw[thick, unshaded] (1.3,.5) --(1.3,-.5) -- (-1.4,-.5) -- (-1.4,.5)--(1.3,.5);
	\node at (-.1,.75) {$\cdots$};
	\node at (-.1,-.75) {$\cdots$};
	\node at (-.05,0) {$#1$};
\end{tikzpicture}}
\newcommand{\commutant}[1]{
\begin{tikzpicture}[rectangular]
	\clip (2,1) --(2,-1) -- (-2,-1) -- (-2,1);
	\filldraw[shaded] (-2.5,1.5)--(-2.5,-1.5)--(-.6,-1.5)--(-.6,1.5);
	\draw (1.3,1.5)--(1.3,-1.5);
	\filldraw[unshaded] (-1,.5) arc (0:180:.3cm) -- (-1.6,-.5) arc (-180:0:.3cm) -- (-1,.5);
	\draw [ultra thick] (2,1) --(-2,1) -- (-2,-1) -- (2,-1)--(2,1);
	\filldraw[thick, unshaded] (1.5,.5) --(1.5,-.5) -- (-1.2,-.5) -- (-1.2,.5)--(1.5,.5);
	\node at (.4,.75) {$\cdots$};
	\node at (.4,-.75) {$\cdots$};
	\node at (.15,0) {$#1$};
\end{tikzpicture}}
\newcommand{\minuscommutant}[1]{
\begin{tikzpicture}[rectangular]
	\clip (2,1) --(2,-1) -- (-2,-1) -- (-2,1);
	\filldraw[unshaded] (-2.5,1.5)--(-2.5,-1.5)--(-.6,-1.5)--(-.6,1.5);
	\draw (1.3,1.5)--(1.3,-1.5);
	\filldraw[shaded] (-1,.5) arc (0:180:.3cm) -- (-1.6,-.5) arc (-180:0:.3cm) -- (-1,.5);
	\draw [ultra thick] (2,1) --(-2,1) -- (-2,-1) -- (2,-1)--(2,1);
	\filldraw[thick, unshaded] (1.5,.5) --(1.5,-.5) -- (-1.2,-.5) -- (-1.2,.5)--(1.5,.5);
	\node at (.4,.75) {$\cdots$};
	\node at (.4,-.75) {$\cdots$};
	\node at (.15,0) {$#1$};
\end{tikzpicture}}
\newcommand{\inclusion}[1]{
\begin{tikzpicture}[rectangular]
	\clip (2,1) --(-2,1) -- (-2,-1) -- (2,-1);
	\filldraw[shaded] (-1.2,1.5)--(-1.2,-1.5)--(-.8,-1.5)--(-.8,1.5);
	\draw (1.6,1.5)--(1.6,-1.5);
	\draw (1,1.5) --(1,-1.5);
	\draw [ultra thick] (2,1) --(-2,1) -- (-2,-1) -- (2,-1)--(2,1);
	\filldraw[thick, unshaded] (1.2,.5) --(1.2,-.5) -- (-1.4,-.5) -- (-1.4,.5)--(1.2,.5);
	\node at (0.1,.75) {$\cdots$};
	\node at (0.1,-.75) {$\cdots$};
	\node at (-0.1,0) {$#1$};
\end{tikzpicture}}
\newcommand{\minusinclusion}[1]{
\begin{tikzpicture}[rectangular]
	\clip (2,1) --(-2,1) -- (-2,-1) -- (2,-1);
	\filldraw[shaded] (-1.6,1.5)--(-1.6,-1.5)--(-1,-1.5)--(-1,1.5);
	\draw (1.2,1.5)--(1.2,-1.5);
	\draw [ultra thick] (2,1) --(-2,1) -- (-2,-1) -- (2,-1)--(2,1);
	\filldraw[thick, unshaded] (-1.2,.5) --(-1.2,-.5) -- (1.4,-.5) -- (1.4,.5)--(-1.2,.5);
	\node at (0.2,.75) {$\cdots$};
	\node at (0.2,-.75) {$\cdots$};
	\node at (0.1,0) {$#1$};
\end{tikzpicture}}
\newcommand{\commutantinclusion}[1]{
\begin{tikzpicture}[rectangular]
	\clip (2,1) --(-2,1) -- (-2,-1) -- (2,-1);
	\filldraw[shaded] (-1.6,1.5)--(-1.6,-1.5)--(-2.8,-1.5)--(-2.8,1.5);
	\draw (1.2,1.5)--(1.2,-1.5);
	\draw (-1,1.5) --(-1,-1.5);
	\draw [ultra thick] (2,1) --(-2,1) -- (-2,-1) -- (2,-1)--(2,1);
	\filldraw[thick, unshaded] (-1.2,.5) --(-1.2,-.5) -- (1.4,-.5) -- (1.4,.5)--(-1.2,.5);
	\node at (0.2,.75) {$\cdots$};
	\node at (0.2,-.75) {$\cdots$};
	\node at (0.1,0) {$#1$};
\end{tikzpicture}}
\newcommand{\rotation}[1]{
\begin{tikzpicture}[rectangular]
	\clip (2.5,1.5) --(-2.5,1.5) -- (-2.5,-1.5) -- (2.5,-1.5);
	\filldraw[shaded] (-1,1.5)--(-1,0)--(-.6,0)--(-.6,1.5);
	\draw (-.2,1.5)--(-.2,-1.5);
	\draw (.5,1.5) --(.5,-1.5);
	\draw (.8,0)--(.8,-1.5);
	\draw (1.2,0) --(1.2,-1.5);
	\filldraw[shaded] (-2,1.5)--(-2,-.5) arc (-180:0:.7cm) --(-1,-.5) arc (0:-180:.3) -- (-1.6,2);
	\draw (.8,.5) arc (180:0:.7cm) --(2.2,-1.5) --(1.8,-1.5)--(1.8,.5) arc (0:180:.3);
	\draw [ultra thick] (2.5,1.5) --(-2.5,1.5) -- (-2.5,-1.5) -- (2.5,-1.5)--(2.5,1.5);
	\filldraw[thick, unshaded] (1.4,.5) --(1.4,-.5) -- (-1.2,-.5) -- (-1.2,.5)--(1.4,.5);
	\node at (.2,1) {$\cdots$};
	\node at (.2,-1) {$\cdots$};
	\node at (0,0) {$#1$};
	\node at (-2.5,1) [right] {$*$};
	\node at (-1.1,0) [left] {$*$};
\end{tikzpicture}}
\newcommand{\minusrotation}[1]{
\begin{tikzpicture}[rectangular]
	\clip (2.5,1.5) --(-2.5,1.5) -- (-2.5,-1.5) -- (2.5,-1.5);
	\filldraw[shaded] (-3,1.5)-- (-.2,1.5)--(-.2,-1.5)--(-3,-1.5)--(-3,1.5);
	\filldraw[unshaded] (-1,1.5)--(-1,0)--(-.6,0)--(-.6,1.5);
	\draw (.5,1.5) --(.5,-1.5);
	\draw (.8,0)--(.8,-1.5);
	\draw (1.2,0) --(1.2,-1.5);
	\filldraw[unshaded] (-2,1.5)--(-2,-.5) arc (-180:0:.7cm) --(-1,-.5) arc (0:-180:.3) -- (-1.6,2);
	\draw (.8,.5) arc (180:0:.7cm) --(2.2,-1.5) --(1.8,-1.5)--(1.8,.5) arc (0:180:.3);
	\draw [ultra thick] (2.5,1.5) --(-2.5,1.5) -- (-2.5,-1.5) -- (2.5,-1.5)--(2.5,1.5);
	\filldraw[thick, unshaded] (1.4,.5) --(1.4,-.5) -- (-1.2,-.5) -- (-1.2,.5)--(1.4,.5);
	\node at (.2,1) {$\cdots$};
	\node at (.2,-1) {$\cdots$};
	\node at (0,0) {$#1$};
	\node at (-2.5,1) [right] {$*$};
	\node at (-1.1,0) [left] {$*$};
\end{tikzpicture}}
\newcommand{\rotationbyone}[1]{
\begin{tikzpicture}[rectangular]
	\clip (2,1) --(2,-1) -- (-2,-1) -- (-2,1);
	\filldraw[shaded] (-2.5,1.5)--(-2.5,-1.5)--(-.6,-1.5)--(-.6,1.5);
	\draw (.6,1.5)--(.6,-1.5);
	\draw (1,-1.5)--(1,.5) arc (180:0:.3cm) -- (1.6,-1.5);
	\filldraw[unshaded] (-1,-.5) arc (0:-180:.3cm) -- (-1.6,1.5) --(-1,2)--(-1,-.5);
	\draw [ultra thick] (2,1) --(-2,1) -- (-2,-1) -- (2,-1)--(2,1);
	\filldraw[thick, unshaded] (1.2,.5) --(1.2,-.5) -- (-1.2,-.5) -- (-1.2,.5)--(1.2,.5);
	\node at (0,.75) {$\cdots$};
	\node at (0,-.75) {$\cdots$};
	\node at (0,0) {$#1$};
\end{tikzpicture}}
\newcommand{\minusrotationbyone}[1]{
\begin{tikzpicture}[rectangular]
	\clip (2,1) --(2,-1) -- (-2,-1) -- (-2,1);
	\filldraw[unshaded] (-2.5,1.5)--(-2.5,-1.5)--(-.6,-1.5)--(-.6,1.5);
	\draw (.6,1.5)--(.6,-1.5);
	\draw (1,-1.5)--(1,.5) arc (180:0:.3cm) -- (1.6,-1.5);
	\filldraw[shaded] (-1,-.5) arc (0:-180:.3cm) -- (-1.6,1.5) --(-1,2)--(-1,-.5);
	\draw [ultra thick] (2,1) --(-2,1) -- (-2,-1) -- (2,-1)--(2,1);
	\filldraw[thick, unshaded] (1.2,.5) --(1.2,-.5) -- (-1.2,-.5) -- (-1.2,.5)--(1.2,.5);
	\node at (0,.75) {$\cdots$};
	\node at (0,-.75) {$\cdots$};
	\node at (0,0) {$#1$};
\end{tikzpicture}}
\newcommand{\multiplication}[2]{
\begin{tikzpicture}[rectangular]
	\clip (2,1.75) --(2,-1.75) -- (-2,-1.75) -- (-2,1.75);
	\draw (1.3,2)--(1.3,-2);
	\draw (.9,2)--(.9,-2);
	\draw (-1.3,2)--(-1.3,-2);
	\draw (-.9,2)--(-.9,-2);
	\draw [ultra thick] (2,1.75) --(-2,1.75) -- (-2,-1.75) -- (2,-1.75)--(2,1.75);
	\filldraw[thick, unshaded] (1.5,.25) --(1.5,1.25) -- (-1.5,1.25) -- (-1.5,.25)--(1.5,.25);
	\filldraw[thick, unshaded] (1.5,-.25) --(1.5,-1.25) -- (-1.5,-1.25) -- (-1.5,-.25)--(1.5,-.25);
	\node at (0,0) {$\cdots$};
	\node at (0,-1.5) {$\cdots$};
	\node at (0,1.5) {$\cdots$};
	\node at (0,-.75) {$#2$};
	\node at (0,.75) {$#1$};
\end{tikzpicture}}
\newcommand{\jonesprojection}{
\begin{tikzpicture}[rectangular]
	\clip (2,1) --(-2,1) -- (-2,-1) -- (2,-1);
	\filldraw[shaded] (-1.4,1)--(-1.4,-1)--(-1,-1)--(-1,1);
	\draw (.4,1)--(.4,-1);
	\draw (.8,1) arc (-180:0:.4cm);
	\draw (.8,-1) arc (180:0:.4cm);
	\draw [ultra thick] (2,1) --(-2,1) -- (-2,-1) -- (2,-1)--(2,1);	
	\node at (-.5,-.3) {$\underbrace{\qquad \qquad}_{n-1}$};
	\node at (-.3,.4) {$\cdots$};
\end{tikzpicture}}
\newcommand{\minusjonesprojection}{
\begin{tikzpicture}[rectangular]
	\clip (2,1) --(-2,1) -- (-2,-1) -- (2,-1);
	\filldraw[shaded] (-1.4,1)--(-1.4,-1)--(-3,-1)--(-3,1);
	\draw (-1,1)--(-1,-1);
	\draw (.4,1)--(.4,-1);
	\draw (.8,1) arc (-180:0:.4cm);
	\draw (.8,-1) arc (180:0:.4cm);
	\draw [ultra thick] (2,1) --(-2,1) -- (-2,-1) -- (2,-1)--(2,1);
	\node at (-.5,-.3) {$\underbrace{\qquad \qquad}_{n-2}$};
	\node at (-.3,.4) {$\cdots$};
\end{tikzpicture}}
\newcommand{\multistep}[2]{
\begin{tikzpicture}[rectangular]
	\clip (2,1.25) --(2,-1.25) -- (-2,-1.25) -- (-2,1.25);
	\filldraw[shaded] (-1.7,2)--(-1.7,-2)--(-1.4,-2)--(-1.4,2);	
	\draw (-.6,2)--(-.6,-2);
	\draw (.3,1.25) arc (-180:0:.4cm);
	\draw (-.3,1.25) arc (-180:0:1cm);
	\draw (.3,-1.25) arc (180:0:.4cm);
	\draw (-.3,-1.25) arc (180:0:1cm);
	\draw [ultra thick] (2,1.25) --(-2,1.25) -- (-2,-1.25) -- (2,-1.25)--(2,1.25);
	\node at (-1.15,-.5) {$\underbrace{\qquad }_{\hspace{.09in}#1}$};
	\node at (.05,.85) {$\underbrace{\hspace{.2in}}_{\hspace{.07in}#2}$};
	\node at (-.95,0) {$\cdots$};
	\node at (.7,.7) {{\fontsize{10}{10}{$\vdots$}}};
	\node at (.7,-.4) {{\fontsize{10}{10}{$\vdots$}}};
\end{tikzpicture}}
\newcommand{\D}{\displaystyle}
\newcommand{\comment}[1]{}
\newcommand{\hs}{\hspace{.07in}}
\newcommand{\hsp}[1]{\hs\text{#1}\hs}
\newcommand{\be}{\begin{enumerate}}
\newcommand{\ee}{\end{enumerate}}
\newcommand{\itt}[1]{\item[\underline{\text{#1}:}]} 
\newcommand{\N}{\mathbb{N}}
\newcommand{\Z}{\mathbb{Z}} 
\newcommand{\C}{\mathbb{C}}
\newcommand{\I}{\infty} 
\newcommand{\set}[2]{\left\{#1 \middle| #2\right\}}
\newcommand{\thh}{^{\text{th}}}
\newcommand{\E}{\mathcal{E}}
\newcommand{\V}{\mathcal{V}}
\begin{document}

\title{The embedding theorem for finite depth subfactor planar algebras}

\author{Vaughan F.R. Jones and David Penneys}

\date{\today}

\maketitle
\begin{abstract}
We define a canonical relative commutant planar algebra from a strongly Markov inclusion of finite von Neumann algebras. In the case of a connected unital inclusion of finite dimensional C$^*$-algebras with the Markov trace, we show this planar algebra is isomorphic to the bipartite graph planar algebra of the Bratteli diagram of the inclusion. Finally, we show that a finite depth subfactor planar algebra is a planar subalgebra of the bipartite graph planar algebra of its principal graph.\end{abstract}
\tableofcontents

\section{Introduction}
A powerful method of construction of subfactors is the use of  \underline{commuting squares}, which are systems of four finite dimensional von Neumann algebras 
\begin{align*}
A_{1,0}  &\subset A_{1,1}\\
\cup\hspace{.07in} &\hspace{.29in}\cup\\
A_{0,0} &\subset A_{0,1}
\end{align*}
included as above, with a faithful trace on $A_{1,1}$ so that $A_{1,0}$ and
$A_{0,1}$ are orthogonal modulo their intersection $A_{0,0}$.

One iterates the basic construction of \cite{MR696688} for the inclusions $A_{i,j}\subset A_{i,j+1}$ and $A_{i,j}\subset A_{i+1,j}$ to obtain a tower of inclusions  $A_{0,n} \subset A_{1,n}$. By a lovely compactness argument of Ocneanu \cite{MR1473221},\cite{MR1642584}, the standard invariant, or higher relative commutants, of the inductive limit inclusion $A_{0,\infty}\subset A_{1,\infty}$ are the algebras $A_{0,1}'\cap A_{n,0}$. Thus once bases have been chosen, the calculation of the relative commutants is a matter of elementary linear algebra. 

It was to formalise this calculation that planar algebras were first introduced \cite{math/9909027}. Finite dimensional inclusions are given by certain graphs (Bratteli diagrams), and in \cite{MR1865703}, a planar algebra associated purely combinatorially to a bipartite graph was introduced so that it is rather obviously the tower of relative commutants for an inclusion $B_0\subset B_1$ having the graph as its Bratteli diagram. But because Ocneanu's notion of \underline{connection} was never completely formalised in \cite{math/9909027}, it was NOT proved that the planar algebra coming from a commuting square via Ocneanu compactness is a planar subalgebra of the one defined in \cite{MR1865703} for the graph of the inclusion $A_{0,0}\subset A_{1,0}$.

Meanwhile the theory of planar algebras grew in its own right and a new method of constructing
subfactors evolved by looking at planar subalgebras of a given planar algebra \cite{0902.1294},\cite{0909.4099}. Now if a subfactor is of \underline{finite depth}, then by \cite{MR1055708}, there is a commuting square that constructs a hyperfinite model of it. Moreover the inclusion $A_{0,0}\subset A_{1,0} $ for this canonical commuting square has Bratteli diagram given by the so-called \underline{principal graph}, which is a powerful subfactor invariant. Thus if the the result of the previous paragraph had been proved, it would have implied the following theorem, which is the main result of this paper:

\begin{thm*}\label{maintheorem} A finite depth subfactor planar algebra is a planar subalgebra of the bipartite graph planar algebra of its principal graph.
\end{thm*}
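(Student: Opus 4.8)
The plan is to realize the given subfactor planar algebra concretely as the tower of higher relative commutants of a hyperfinite model arising from a commuting square, and then to exhibit those relative commutants as a planar subalgebra of the relative commutant planar algebra of the finite-dimensional vertical inclusion, which by the isomorphism described in the abstract is the bipartite graph planar algebra of the principal graph.

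First I would invoke the finite depth hypothesis together with Popa's theorem \cite{MR1055708} to produce the canonical commuting square
\[
\begin{matrix} A_{1,0} & \subset & A_{1,1}\\ \cup & & \cup \\ A_{0,0} & \subset & A_{0,1}\end{matrix}
\]
whose vertical inclusion $A_{0,0}\subset A_{1,0}$ has Bratteli diagram equal to the principal graph, and whose inductive-limit inclusion $A_{0,\infty}\subset A_{1,\infty}$ is a hyperfinite model whose standard invariant reproduces the given subfactor planar algebra. Iterating the basic construction down the first column produces the Jones tower $A_{0,0}\subset A_{1,0}\subset A_{2,0}\subset\cdots$, and by Ocneanu compactness \cite{MR1473221,MR1642584} the $n$-box space of the standard invariant is exactly $A_{0,1}'\cap A_{n,0}$.

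Next, since $A_{0,0}\subset A_{0,1}$ we have $A_{0,1}'\subset A_{0,0}'$, so at every level there is a vector-space inclusion
\[
A_{0,1}'\cap A_{n,0}\ \subseteq\ A_{0,0}'\cap A_{n,0}.
\]
The right-hand spaces are precisely the box spaces of the relative commutant planar algebra of the strongly Markov inclusion $A_{0,0}\subset A_{1,0}$ with its Markov trace, which is isomorphic to the bipartite graph planar algebra of the Bratteli diagram of $A_{0,0}\subset A_{1,0}$, namely the principal graph. It therefore remains only to check that these vector-space inclusions assemble into a \emph{planar} subalgebra inclusion: that the canonical relative commutant planar structure on the spaces $A_{0,0}'\cap A_{n,0}$, restricted to the subspaces $A_{0,1}'\cap A_{n,0}$, recovers the subfactor planar structure coming from $A_{0,\infty}\subset A_{1,\infty}$. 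Both structures are instances of the same relative commutant construction, so the content is that this construction is compatible with the commuting square.

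I would verify this on a generating set of tangles. Multiplication and the inclusion tangles are immediate, since both planar structures realize them through the ambient algebra $A_{n,0}$. The Temperley--Lieb cup and cap tangles are implemented by the Jones projections $e_1,e_2,\dots$ of the vertical tower, which are shared by both constructions; their compatibility, together with that of the partial-trace (conditional-expectation) tangles, is exactly where the commuting square relation $E_{A_{0,1}}E_{A_{1,0}}=E_{A_{0,0}}$ does its work, forcing the two families of conditional expectations and the two Markov traces to agree on the relevant subspaces. The main obstacle is the rotation tangle: one must show that the rotation defining the subfactor planar structure on $A_{0,1}'\cap A_{n,0}$ is the restriction of the canonical rotation on $A_{0,0}'\cap A_{n,0}$. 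This is precisely the compatibility that Ocneanu's unformalized notion of connection was meant to capture, and I expect the bulk of the work to lie in writing both rotations purely in terms of conditional expectations and Jones projections and then deducing their agreement from the commuting square relations. Once the generating tangles match, functoriality of the planar operad promotes the level-by-level vector-space inclusions to a full planar subalgebra inclusion, completing the proof.
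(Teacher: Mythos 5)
Your proposal is exactly the route that this paper was written to avoid, and it contains the same gap that the paper's introduction identifies as never having been filled. The reduction via Popa's theorem \cite{MR1055708} and Ocneanu compactness \cite{MR1473221} does give you, correctly, a vector-space inclusion $A_{0,1}'\cap A_{n,0}\subseteq A_{0,0}'\cap A_{n,0}$ of the subfactor box spaces into the box spaces of the graph planar algebra. But the entire content of the theorem is that this inclusion intertwines the two planar structures, and at precisely that point your argument stops being a proof: you write that the conditional-expectation tangles are ``exactly where the commuting square relation does its work'' and that for the rotation you ``expect the bulk of the work to lie'' in a computation you do not perform. Note that the two structures live in genuinely different worlds: the subfactor planar structure on $A_{0,1}'\cap A_{n,0}$ is implemented through the II$_1$ factors $A_{0,\infty}\subset A_{1,\infty}$ (its cap and commutant-expectation tangles are given by $E_{A_{1,\infty}'}$, equivalently by Pimsner--Popa bases of $A_{1,\infty}$ over $A_{0,\infty}$), while the graph planar structure is built from the finite-dimensional pair $A_{0,0}\subset A_{1,0}$. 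Relating these two families of expectations and bases across the commuting square is exactly Ocneanu's connection compatibility, which the introduction stresses ``was never completely formalised,'' and whose absence is the reason this theorem was not already known. Your proposal therefore rediscovers the problem rather than solving it.

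The paper's actual proof takes a different, more self-contained route: no commuting square, no hyperfinite model, no Ocneanu compactness, no connections. Given the finite depth planar algebra $Q_\bullet$, it sets $M_0=Q_{s,+}$ and $M_1=Q_{s+1,+}$ with $s=2r$ chosen so that $Q_{s,+}\subset Q_{s+1,+}\subset (Q_{s+2,+},e_{s+1})$ is standard, forms the canonical relative-commutant planar algebra $P_\bullet$ of this strongly Markov inclusion (Theorem \ref{canonical}), identifies $P_\bullet$ with the bipartite graph planar algebra of its Bratteli diagram, which is the principal graph (Section \ref{sec:iso}), and then defines $\Phi\colon Q_\bullet\to P_\bullet$ diagrammatically by adding $s$ (resp.\ $s+1$) strings on the left. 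Crucially, Lemma \ref{technical} shows one never has to check the rotation at all: it suffices to verify compatibility with multiplication, $*$, the Jones projections, the expectation $\alpha_n$, the inclusion $\beta_{n+1}$, the map $\gamma_n^+$, and $i_n^-$, because the rotations are generated by these maps; and each such check in Theorem \ref{embed} is a picture drawn inside $Q_\bullet$ itself, the only nontrivial one ($\gamma_n^+$) following from the identity $\sum_{b\in B}be_{s+1}b^*=1$ for a Pimsner--Popa basis $B$ of $Q_{s+1,+}$ over $Q_{s,+}$ together with Proposition \ref{sumb}. If you wanted to rescue your approach you could try importing this generating-set lemma to eliminate the rotation, but you would still have to match the two $\gamma_n^+$ maps, i.e.\ show that a Pimsner--Popa basis for $A_{1,0}$ over $A_{0,0}$ serves as one for $A_{1,\infty}$ over $A_{0,\infty}$ on the relevant subspaces --- the same connection-type difficulty in different clothing.
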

(See \cite{0808.0764} for the definition of the principal graph of a planar algebra.)  We prove this result with the interesting twist of not using connections. In particular, our proof does not invoke the \underline{dual principal graph}, which is perhaps rather surprising.

There are three steps to our proof. The first step, Section \ref{sec:pa}, is to define a canonical planar algebra structure on the tower of relative commutants from a connected unital inclusion of finite dimensional C$^*$-algebras whose Bratteli diagram is a given graph. We do this in more generality, replacing finite dimensionality by a \underline{strong Markov} property (see Definition \ref{stronglymarkov}), because it is no harder and should have applications. The second step, Section \ref{sec:iso}, is to identify this planar algebra structure with that of \cite{MR1865703} in the finite dimensional case. Finally, in Section \ref{sec:embed}, we construct the embedding map as follows: given a finite depth subfactor planar algebra $Q_\bullet$, pick $s=2r$ suitably large so that the inclusion $Q_{s,+}\subset Q_{s+1,+}\subset (Q_{s+2,+},e_{s+1})$ is standard, i.e., isomorphic to the basic construction. Set $M_0=Q_{s,+}$ and $M_1=Q_{s+1,+}$, and let $P_\bullet$ be the canonical planar algebra $P_\bullet$ associated to the inclusion $M_0\subset M_1$. We prove in Theorem \ref{embed} that the map $Q_\bullet\to P_\bullet$ given by adding $2s$ or $2s+1$ strings on the left, depending whether we are in $Q_{n,+}$ or $Q_{n,-}$ respectively, is an inclusion of planar algebras.
$$\begin{tikzpicture}[baseline]
	\clip (1.5,1.5) --(-1.5,1.5) -- (-1.5,-1.5) -- (1.5,-1.5);
	\filldraw[shaded] (-.8,1.5)--(-.8,-1.5)--(-.4,-1.5)--(-.4,1.5);
	\draw  (.8,-1.5)--(.8,1.5);
	\draw [ultra thick] (1.5,1.5) --(-1.5,1.5) -- (-1.5,-1.5) -- (1.5,-1.5)--(1.5,1.5);
	\filldraw[thick, unshaded] (1,.5) --(-1,.5) -- (-1,-.5) -- (1,-.5)--(1,.5);
	\node at (0,-1.1) {$\underbrace{\qquad \qquad}_{n}$};
	\node at (.2,1) {$\cdots$};
	\node at (.2,-.75) {$\cdots$};
	\node at (0,0) {$x$};
\end{tikzpicture}\longmapsto
\begin{tikzpicture}[baseline]
	\clip (1.5,1.5) --(-3.5,1.5) -- (-3.5,-1.5) -- (1.5,-1.5);
	\filldraw[shaded] (-3,1.5)--(-3,-1.5)--(-2.6,-1.5)--(-2.6,1.5);
	\filldraw[shaded] (-1.8,1.5)--(-1.8,-1.5)--(-1.4,-1.5)--(-1.4,1.5);	
	\filldraw[shaded] (-.8,1.5)--(-.8,-1.5)--(-.4,-1.5)--(-.4,1.5);
	\draw  (.8,-1.5)--(.8,1.5);
	\draw [ultra thick] (1.5,1.5) --(-3.5,1.5) -- (-3.5,-1.5) -- (1.5,-1.5)--(1.5,1.5);
	\filldraw[thick, unshaded] (1,.5) --(-1,.5) -- (-1,-.5) -- (1,-.5)--(1,.5);
	\node at (-2.2,-1.1) {$\underbrace{\qquad \qquad}_{s=2r}$};
	\node at (0,-1.1) {$\underbrace{\qquad \qquad}_{n}$};
	\node at (.2,1) {$\cdots$};
	\node at (.2,-.75) {$\cdots$};
	\node at (-2.2,0) {$\cdots$};
	\node at (0,0) {$x$};
\end{tikzpicture}$$

While this paper was being written, Morrison and Walker in \cite{gpa} produced a totally different proof which constructs an embedding directly from the planar algebra $Q_\bullet$ without the use of algebra towers and centralisers. Their method also has the advantage that it applies to \underline{infinite depth} subfactor planar algebras without alteration!

Both authors would like to acknowledge support from NSF grants DMS 0401734, DMS 0856316, and UC Berkeley's Geometry, Topology, and Operator Algebras NSF grant EMSW21-RTG.

\section{The canonical planar algebra of a strongly Markov inclusion of finite von Neumann algebras}\label{sec:pa}
After defining the notion of a strongly Markov inclusion of finite von Neumann algebras, we show the basic construction is also strongly Markov with the same (Watatani) index. We then define a canonical planar algebra associated to a strongly Markov inclusion.

Many results of this section can be found in \cite{MR696688}, \cite{MR860811}, \cite{MR996807}, \cite{MR1073519}, \cite{MR1278111}, and \cite{MR1424954}, but we provide some proofs for the reader's convenience.

\subsection{Bases, traces, and strongly Markov inclusions}\label{prelim}

\begin{nota}
Throughout this paper, a trace on a finite von Neumann algebra will mean a faithful, normal, tracial state unless otherwise specified. We will write $M_0\subset (M_1,\tr_1)$ to mean $M_0\subset M_1$ is an inclusion of finite von Neumann algebras where $\tr_1$ is a trace on $M_1$. We set $\tr_0=\tr_1|_{M_0}$.
\end{nota}

Let $M_0\subset (M_1,\tr_1)$. Let $M_2=\langle M_1,e_1\rangle=JM_0'J\subset B(L^2(M_1,\tr_1))$ be the basic construction, where $e_1$ is the Jones projection with range $L^2(M_0,\tr_0)$, and $J\colon L^2(M_1,\tr_1)\to L^2(M_1,\tr_1)$ is the antilinear unitary given by the antilinear extension of $x\Omega\mapsto x^*\Omega$, where $\Omega\in L^2(M_1,\tr_1)$ is the image of $1\in M_1$. The following proposition is straightforward:

\begin{prop}\label{ppbasis}
The following are equivalent for a finite subset $B=\{b\}\subset M_1$:
\item[(i)] $1=\sum\limits_{b\in B} b e_1 b^*$,
\item[(ii)]  $\D x= \sum\limits_{b\in B} b E_{M_0}(b^*x)$ for all $x\in M_1$, and 
\item[(iii)] $\D x= \sum\limits_{b\in B} E_{M_0}(xb) b^*$ for all $x\in M_1$.
\end{prop}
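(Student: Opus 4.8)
The plan is to establish (i) $\iff$ (ii) by interpreting the self-adjoint operator $\sum_{b\in B} b e_1 b^* \in M_2$ through its action on the dense subspace $M_1\Omega \subset L^2(M_1,\tr_1)$, and then to deduce (ii) $\iff$ (iii) from the fact that the two statements are the same equation read through the involution.

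First I would record the defining property of the Jones projection: for every $y \in M_1$ one has $e_1(y\Omega) = E_{M_0}(y)\Omega$, since $e_1$ is the orthogonal projection of $L^2(M_1,\tr_1)$ onto $L^2(M_0,\tr_0)$ and $E_{M_0}$ is the associated conditional expectation. Applying $\sum_b b e_1 b^*$ to a vector $x\Omega$ with $x\in M_1$ and using this identity gives
$$\Big(\sum_{b\in B} b e_1 b^*\Big)(x\Omega) = \sum_{b\in B} b\, e_1(b^* x\,\Omega) = \sum_{b\in B} b\, E_{M_0}(b^* x)\,\Omega = \Big(\sum_{b\in B} b\, E_{M_0}(b^* x)\Big)\Omega.$$
Since $\sum_b b e_1 b^*$ and $1$ are both bounded operators and $M_1\Omega$ is dense in $L^2(M_1,\tr_1)$, condition (i) holds if and only if $x\Omega = \big(\sum_b b\, E_{M_0}(b^* x)\big)\Omega$ for every $x\in M_1$. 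Because $\tr_1$ is faithful, the map $y\mapsto y\Omega$ is injective on $M_1$ (if $y\Omega=0$ then $\tr_1(y^*y)=0$, so $y=0$), and hence the displayed equality of vectors is equivalent to $x = \sum_b b\, E_{M_0}(b^* x)$ for all $x$, which is exactly (ii).

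For the remaining equivalence (ii) $\iff$ (iii), I would exploit the involution symmetry. Assuming (ii), replace $x$ by $x^*$ to obtain $x^* = \sum_b b\, E_{M_0}(b^* x^*)$, and take adjoints of both sides. Using that $E_{M_0}$ is $*$-preserving, $E_{M_0}(b^* x^*)^* = E_{M_0}\big((b^* x^*)^*\big) = E_{M_0}(xb)$, so the right-hand side becomes $\sum_b E_{M_0}(xb)\, b^*$, which is (iii); the converse is identical. (This is also consistent with the observation that $\big(\sum_b b e_1 b^*\big)^* = \sum_b b e_1 b^*$, so condition (i) is invariant under the involution and cannot prefer the left-handed form (ii) to the right-handed form (iii).)

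I do not expect a genuine obstacle here: the entire content is the defining relation $e_1(y\Omega)=E_{M_0}(y)\Omega$ together with faithfulness of $\tr_1$. The only points deserving a word of care are the passage from the dense subspace $M_1\Omega$ to all of $L^2(M_1,\tr_1)$, which is legitimate because both operators appearing in (i) are bounded, and the injectivity of $y\mapsto y\Omega$, which is precisely faithfulness of the trace; everything else is a short direct computation.
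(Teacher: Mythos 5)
Your proof is correct, and it is precisely the standard argument the paper has in mind: the paper offers no proof at all, simply declaring the proposition ``straightforward,'' and your two steps --- identifying $\sum_b b e_1 b^*$ with the map $x \mapsto \sum_b b E_{M_0}(b^*x)$ via the relation $e_1(y\Omega)=E_{M_0}(y)\Omega$ together with density of $M_1\Omega$, boundedness, and faithfulness of $\tr_1$, then passing between (ii) and (iii) by applying the identity to $x^*$ and taking adjoints --- supply exactly the details being elided. Nothing is missing; the points you flag (dense-subspace extension and injectivity of $y\mapsto y\Omega$) are indeed the only ones requiring care.
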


\begin{defn}\label{ppbasisdef} A \underline{Pimsner-Popa basis} for $M_1$ over $M_0$ is a finite subset $B=\{b\}\subset M_1$ for which the conditions in Proposition \ref{ppbasis} hold.  
\end{defn}

We refer the reader to \cite{MR996807} for the proof of the following:
\begin{prop}\label{existence} The following are equivalent:
\item[(i)] There is a Pimsner-Popa basis for $M_1$ over $M_0$,
\item[(ii)] $M_1\otimes_{M_0} M_1\to M_2$ by  $x \otimes y\mapsto x e_1 y$ is an $M_1-M_1$ bimodule isomorphism, and
\item[(iii)] $M_2=M_1 e_1 M_1$.
\end{prop}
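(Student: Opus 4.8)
The plan is to prove the cycle (i) $\Rightarrow$ (ii) $\Rightarrow$ (iii) $\Rightarrow$ (i). First I would record the elementary properties of the candidate map $\phi\colon M_1\otimes_{M_0}M_1\to M_2$, $x\otimes y\mapsto xe_1y$. Since $e_1$ commutes with $M_0$, the assignment $(x,y)\mapsto xe_1y$ is balanced over $M_0$ and so descends to a well-defined $M_1$--$M_1$ bimodule map, whose image is by definition the linear span $M_1e_1M_1$. Consequently (ii) $\Rightarrow$ (iii) is immediate: an isomorphism is surjective and its range is $M_1e_1M_1$, so $M_2=M_1e_1M_1$. This also makes clear that the only content of (ii) beyond (iii) is the injectivity of $\phi$, which is what the explicit inverse constructed below will supply.

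Second, for (i) $\Rightarrow$ (ii) I would build a two-sided inverse from a Pimsner--Popa basis $B$. The tool is the pull-down map: using the general identity $e_1Xe_1\in M_0e_1$ for $X\in M_2$ together with $1=\sum_{b}be_1b^*$, one checks $e_1M_2=e_1M_1$, so each $X\in M_2$ determines a unique $\nu(X)\in M_1$ with $e_1X=e_1\nu(X)$ (uniqueness because $m\mapsto e_1m$ is injective on $M_1$ by faithfulness of $\tr_1$). I then set $\psi(X)=\sum_b b\otimes\nu(b^*X)$. The verification is a short computation: $\phi\psi(X)=\sum_b be_1\nu(b^*X)=\sum_b be_1b^*X=X$ from the defining property of $\nu$ and $1=\sum_b be_1b^*$, while $\psi\phi(x\otimes y)=\sum_b b\otimes E_{M_0}(b^*x)y=\big(\sum_b bE_{M_0}(b^*x)\big)\otimes y=x\otimes y$, where the middle equality moves the $M_0$-element across the balanced tensor and the last invokes the reconstruction identity (ii) of Proposition \ref{ppbasis}. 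Hence $\phi$ is a bimodule isomorphism.

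Third, and this is where the real work lies, (iii) $\Rightarrow$ (i). From $M_2=M_1e_1M_1$ I write $1=\sum_{i=1}^na_ie_1b_i$. Right-multiplying this relation by $x\in M_1$ and then by $e_1$, and using $e_1me_1=E_{M_0}(m)e_1$, gives $xe_1=\big(\sum_i a_iE_{M_0}(b_ix)\big)e_1$; injectivity of $m\mapsto me_1$ then yields the quasi-basis reconstruction $x=\sum_i a_iE_{M_0}(b_ix)$ for all $x\in M_1$. This exhibits $M_1$, equipped with the $M_0$-valued inner product $\langle x,y\rangle=E_{M_0}(x^*y)$, as a finitely generated projective Hilbert $M_0$-module: the maps $x\mapsto(E_{M_0}(b_ix))_i$ and $(\xi_i)\mapsto\sum_i a_i\xi_i$ split the module off the free module $M_0^n$. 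The \emph{main obstacle} is the symmetrization step, namely passing from this asymmetric quasi-basis to the symmetric form demanded by Definition \ref{ppbasisdef}. I would handle it by the standard frame of a projective Hilbert module: replace the splitting idempotent $e\in M_n(M_0)$, $e_{ji}=E_{M_0}(b_ja_i)$, by its Murray--von Neumann-equivalent self-adjoint projection $p$ (the point where the von Neumann algebra structure of $M_0$, via polar decomposition, is essential), identify $M_1\cong pM_0^n$ unitarily, and take $\{b_j\}$ to be the images of the standard basis vectors. One then gets $x=\sum_j b_jE_{M_0}(b_j^*x)$, which is condition (ii) of Proposition \ref{ppbasis} and so provides a Pimsner--Popa basis. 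The only delicate point is justifying this module-theoretic symmetrization (equivalently, running comparison theory inside $M_2$, which condition (iii) forces to be finite); the remaining steps are direct computations.
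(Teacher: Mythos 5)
The paper itself gives no proof of this proposition---it is quoted from Watatani \cite{MR996807}---so your argument has to stand on its own. Your first two implications do: (ii)$\Rightarrow$(iii) is indeed immediate, and your (i)$\Rightarrow$(ii) via the pull-down map (the general fact $e_1M_2e_1=M_0e_1$, hence $e_1M_2=e_1M_1$ once a basis exists, hence the inverse $\psi(X)=\sum_b b\otimes\nu(b^*X)$) is correct and complete. The gap is in (iii)$\Rightarrow$(i), exactly at the words ``identify $M_1\cong pM_0^n$ unitarily.'' The module isomorphism you actually possess, $S\colon x\mapsto(E_{M_0}(b_ix))_i$ with inverse $T\colon\xi\mapsto\sum_ia_i\xi_i$, carries $M_1$ onto $eM_0^n=pM_0^n$, but it does \emph{not} preserve the $M_0$-valued inner products: $\sum_jE_{M_0}(x^*b_j^*)E_{M_0}(b_jy)\neq E_{M_0}(x^*y)$ in general. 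Replacing the idempotent $e$ by an equivalent projection $p$ does nothing to cure this (that step is cheap and works in any C$^*$-algebra); if you then transport the standard frame $\{p\delta_j\}$ through the non-unitary $S$, you recover only $x=\sum_jT(p\delta_j)\,E_{M_0}(b_jx)$, i.e.\ another \emph{asymmetric} quasi-basis. The unitarity of the identification is precisely the content of the symmetrization, and it is asserted, not proved; moreover it needs a prerequisite you never address, namely that $M_1$ is complete in the norm $\|x\|_E=\|E_{M_0}(x^*x)\|^{1/2}$ (without completeness there is no polar decomposition in the algebra of adjointable operators to appeal to).

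Both pieces are provable, so the gap is fixable: completeness follows from your quasi-basis via Kadison--Schwarz, $\|x\|\leq\bigl(\sum_i\|a_i\|\,\|b_i\|\bigr)\|x\|_E$, so the two norms are equivalent; and then one takes the polar decomposition of $S$ itself ($S^*S\geq\|T\|^{-2}$ is invertible, so $u=S(S^*S)^{-1/2}\colon M_1\to pM_0^n$ is unitary and its frame pullback gives the symmetric formula). But there is a cleaner route that stays inside $M_2$ and avoids Hilbert-module completeness altogether, which I would recommend: from $1=\sum_{i=1}^na_ie_1b_i$ form the column $Y=(e_1b_i)_i$ and row $X=(a_ie_1)_i$ over $M_2$. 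Then $XY=1$ forces $Y^*Y=\sum_ib_i^*e_1b_i$ to satisfy $1=Y^*X^*XY\leq\|X\|^2\,Y^*Y$, so $Y^*Y$ is invertible in $M_2$, and $\omega=Y(Y^*Y)^{-1/2}$ is a column with $\omega^*\omega=1$. Condition (iii) makes the pull-down algebraic, $e_1M_2=e_1M_1e_1M_1\subseteq e_1M_1$, so each entry is $\omega_i=e_1m_i$ with $m_i\in M_1$, and $1=\omega^*\omega=\sum_im_i^*e_1m_i$ exhibits $\{m_i^*\}$ as a finite Pimsner--Popa basis. This is the same symmetrization idea you are gesturing at, but with the polar decomposition applied where it is actually needed.
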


\begin{rem}\label{basisproperties}
$M_1\otimes_{M_0} M_1$ is a $*$-algebra with multiplication $(x_1\otimes y_1)(x_2\otimes y_2)=x_1\otimes E_{M_0}(y_1x_2)y_2$ and adjoint $(x\otimes y)^*=y^*\otimes x^*$. If there is a Pimsner-Popa basis for $M_1$ over $M_0$, the sum $\sum_{b\in B} b\otimes b^*$ is independent of the choice of Pimsner-Popa basis $B$, as it is the identity. (We will renormalize in Proposition \ref{renormalize}.) 
\end{rem}

\begin{defn}[\cite{MR996807}]
If there is a Pimsner-Popa basis $B=\{b\}$ for $M_1$ over $M_0$, then we define the (Watatani) index $$[M_1\colon M_0]=\sum_{b\in B}bb^*,$$ which is independent of the choice of basis.
\end{defn}

\begin{defn}\label{canonicaltrace}
Recall from \cite{MR1278111} that $M_2$ has a canonical faithful, normal, semifinite trace $\Tr_2$ which is the extension of the map $x e_1 y\mapsto \tr_1(xy)$ for $x,y\in M_1$.
\end{defn}

\begin{defn}\label{stronglymarkov}
An inclusion $M_0\subset (M_1,\tr_1)$ of finite von Neumann algebras is called \underline{strongly Markov} if 
\item[(1)] $\Tr_2$ is finite with $\Tr_2(1)^{-1}\Tr_2|_{M_1}=\tr_1$ and
\item[(2)] there is a Pimsner-Popa basis for $M_1$ over $M_0$.
\end{defn}

\begin{rem}
Recall from \cite{MR1278111} that $\Tr_2(1)^{-1}\Tr_2$ extends $\tr_1$ if and only if $\Tr_2(1)=[M_1\colon M_0]\in [1,\I)$.
\end{rem}

\begin{exs} 
\item[(1)] A finite Jones index inclusion of $II_1$-factors with the unique trace is strongly Markov, and the Watatani index is equal to the Jones index.
\item[(2)] A connected, unital inclusion of finite dimensional C$^*$-algebras with the Markov trace is strongly Markov, and the index is equal to $\|\Lambda^T\Lambda\|$ where $\Lambda$ is the bipartite adjacency matrix for the Bratteli diagram of the inclusion.
\end{exs}

Suppose $M_0\subset (M_1,\tr_1)$ is strongly Markov. Then $M_2$ is finite and $\tr_2=[M_1\colon M_0]^{-1} \Tr_2$ extends $\tr_1$, so we may iterate the basic construction for $M_1\subset (M_2,\tr_2)$. Let $M_3=\langle M_2,e_2\rangle\subset B(L^2(M_2,\tr_2))$, where $e_2$ is the Jones projection with range $L^2(M_1,\tr_1)$. Let $\Tr_3$ be the canonical faithful, normal, semifinite trace on $M_3$ (see Definition \ref{canonicaltrace}). The following lemma is straightforward:

\begin{lem}\label{TL}
\item[(1)] The conditional expectation $E_{M_1}\colon M_2\to M_1$ is given by $E_{M_1}(x e_1 y)=xy$,
\item[(2)] $e_1 e_2 e_1= [M_1\colon M_0]^{-1}e_1$ and $e_2 e_1 e_2=[M_1\colon M_0]^{-1} e_2$, and
\item[(3)]
if $B$ is a Pimsner-Popa basis for $M_1$ over $M_0$, then $\set{[M_1\colon M_0]^{1/2} b e_1}{b\in B}$ is a Pimsner-Popa basis for $M_2$ over $M_1$.
\end{lem}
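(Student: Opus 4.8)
The plan is to take the three parts in order, reducing each to the defining property of a conditional expectation together with the Markov normalization of the traces. Write $\beta := [M_1\colon M_0]$, and recall from earlier in this section that the basic construction is again strongly Markov with the \emph{same} index, so that $[M_2\colon M_1]=\beta$ as well. \emph{Part (1).} The map $E_{M_1}$ is the unique $\tr_2$-preserving conditional expectation of $M_2$ onto $M_1$. Since a Pimsner--Popa basis exists, Proposition \ref{existence} gives $M_2=M_1e_1M_1$, so it suffices to evaluate $E_{M_1}$ on elements $xe_1y$ with $x,y\in M_1$; by $M_1$-bimodularity $E_{M_1}(xe_1y)=x\,E_{M_1}(e_1)\,y$, and everything reduces to computing $E_{M_1}(e_1)$. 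I would pin this down by testing against $M_1$ in the trace: for all $a\in M_1$,
\[ \tr_1\big(E_{M_1}(e_1)\,a\big)=\tr_2(e_1a)=\beta^{-1}\Tr_2(e_1a)=\beta^{-1}\tr_1(a), \]
using $\tr_2=\beta^{-1}\Tr_2$ and the defining formula $\Tr_2(ue_1v)=\tr_1(uv)$. Faithfulness of $\tr_1$ forces $E_{M_1}(e_1)=\beta^{-1}1$, so that $E_{M_1}(xe_1y)=\beta^{-1}xy$ (the Markov scalar $\beta^{-1}$ is exactly what makes Parts (2) and (3) consistent).

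\emph{Part (2).} The relation $e_2e_1e_2=\beta^{-1}e_2$ is immediate from Part (1): the Jones projection $e_2$ implements $E_{M_1}$ via $e_2we_2=E_{M_1}(w)e_2$ for $w\in M_2$, and taking $w=e_1$ gives $e_2e_1e_2=E_{M_1}(e_1)e_2=\beta^{-1}e_2$. For the companion relation, set $p:=\beta\,e_1e_2e_1$. Using the relation just proved, $p^2=\beta^2\,e_1e_2e_1e_2e_1=\beta^2\,e_1(\beta^{-1}e_2)e_1=\beta\,e_1e_2e_1=p$, and $p=e_1pe_1$, so $p$ is a projection under $e_1$. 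It remains to see $p=e_1$, which I would get from faithfulness of the trace on $M_3$ by checking that the traces agree: by the Markov property $\tr_3(we_2)=[M_2\colon M_1]^{-1}\tr_2(w)=\beta^{-1}\tr_2(w)$ for $w\in M_2$ together with traciality and $e_1^2=e_1$,
\[ \tr_3(p)=\beta\,\tr_3(e_1e_2e_1)=\beta\,\tr_3(e_1e_2)=\beta\cdot\beta^{-1}\tr_2(e_1)=\tr_2(e_1)=\tr_3(e_1). \]
Since $0\le p\le e_1$ and $\tr_3$ is faithful, $p=e_1$, i.e.\ $e_1e_2e_1=\beta^{-1}e_1$.

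\emph{Part (3).} By Proposition \ref{ppbasis} applied to the strongly Markov inclusion $M_1\subset(M_2,\tr_2)$ (with $e_2$ playing the role of $e_1$), a finite subset of $M_2$ is a Pimsner--Popa basis for $M_2$ over $M_1$ precisely when condition (i) holds. So for $c_b:=\beta^{1/2}be_1$ it suffices to compute, using Part (2),
\[ c_be_2c_b^{*}=\beta\,b\,e_1e_2e_1\,b^{*}=\beta\,b\,(\beta^{-1}e_1)\,b^{*}=be_1b^{*}, \]
whence $\sum_{b\in B}c_be_2c_b^{*}=\sum_{b\in B}be_1b^{*}=1$, the last equality being condition (i) of Proposition \ref{ppbasis} for the basis $B$ of $M_1$ over $M_0$. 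Thus $\{c_b\}$ is a Pimsner--Popa basis for $M_2$ over $M_1$.

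The only step with genuine content is the first relation of Part (2). In contrast to $e_2e_1e_2=\beta^{-1}e_2$, which falls out formally from the defining property of $e_2$, the identity $e_1e_2e_1=\beta^{-1}e_1$ is not purely algebraic and must be closed off with the trace. The delicate point there is the computation $\tr_3(e_1e_2)=\beta^{-1}\tr_2(e_1)$, which relies on both the Markov property of the iterated trace and the equality $[M_2\colon M_1]=[M_1\colon M_0]$ established earlier in this section; without index-preservation the subprojection $p$ and $e_1$ need not have equal trace, and the argument would collapse. Everything else is bookkeeping with bimodularity and the identity $M_2=M_1e_1M_1$.
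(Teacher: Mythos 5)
Your part (1), the relation $e_2e_1e_2=\beta^{-1}e_2$, and the reduction of part (3) to part (2) are all sound; in particular your normalization in (1) is the right one (for the $\tr_2$-preserving conditional expectation one indeed gets $E_{M_1}(xe_1y)=\beta^{-1}xy$, writing $\beta=[M_1\colon M_0]$, and this is the version consistent with (2) and (3); the formula $xy$ as literally printed is the non-normalized pull-down map). The genuine gap is your proof of $e_1e_2e_1=\beta^{-1}e_1$: it is circular within the paper's development. You say that $M_1\subset(M_2,\tr_2)$ being strongly Markov with $[M_2\colon M_1]=[M_1\colon M_0]$ was ``established earlier in this section,'' but it is established in the theorem immediately \emph{after} Lemma \ref{TL}, and the proof of that theorem uses precisely parts (2) and (3) of this lemma: the identity $1=\beta\sum_{b\in B}be_1e_2e_1b^*$ is what shows $\Tr_3$ is finite and computes $\Tr_3|_{M_2}=\beta\,\tr_2$. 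At the point where the lemma must be proved, $\Tr_3$ is only known as the semifinite trace determined by $\Tr_3(xe_2y)=\tr_2(xy)$ (Definition \ref{canonicaltrace}); there is as yet no normalized trace $\tr_3$ extending $\tr_2$, no value for $\Tr_3(e_1)$, and no equality of indices. So the step $\tr_3(e_1e_2)=\beta^{-1}\tr_2(e_1)=\beta^{-1}\tr_3(e_1)$ --- exactly the step you single out as carrying all the content --- is not available, and the comparison of the traces of $p$ and $e_1$ collapses.

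The repair is to prove the relation directly on $L^2(M_2,\tr_2)$, where it is elementary and needs nothing beyond what precedes the lemma. Given $x\in M_2$, write $x=\sum_i u_ie_1v_i$ with $u_i,v_i\in M_1$ (Proposition \ref{existence}). Then $e_1x=\sum_i E_{M_0}(u_i)e_1v_i$, so by your part (1), $e_1E_{M_1}(e_1x)=\beta^{-1}\sum_i E_{M_0}(u_i)e_1v_i=\beta^{-1}e_1x$ (the pull-down identity). Since $e_2(y\Omega)=E_{M_1}(y)\Omega$ for $y\in M_2$, where $\Omega$ is the cyclic trace vector of $L^2(M_2,\tr_2)$, this gives $e_1e_2e_1(x\Omega)=e_1E_{M_1}(e_1x)\Omega=\beta^{-1}e_1x\Omega$ on the dense subspace $M_2\Omega$, hence $e_1e_2e_1=\beta^{-1}e_1$. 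With this substitution your part (3) goes through verbatim (note Proposition \ref{ppbasis} needs no Markov hypothesis, so invoking it for $M_1\subset(M_2,\tr_2)$ is legitimate), and the subsequent theorem on $[M_2\colon M_1]$ can then be deduced from the lemma as the paper does, with no circularity.
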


\begin{thm} $M_1\subset (M_2,\tr_2)$ is strongly Markov and $[M_2\colon M_1]=[M_1\colon M_0]$.
\end{thm}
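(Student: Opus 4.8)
The plan is to verify the two defining conditions of Definition \ref{stronglymarkov} for the inclusion $M_1\subset (M_2,\tr_2)$ and, along the way, to read off the index. Condition (2), the existence of a Pimsner--Popa basis for $M_2$ over $M_1$, is already handed to us by Lemma \ref{TL}(3): the set $C=\set{[M_1\colon M_0]^{1/2}be_1}{b\in B}$ is such a basis whenever $B$ is a Pimsner--Popa basis for $M_1$ over $M_0$. So the real content is the index computation and condition (1).

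For the index, I would apply the definition directly to the basis $C$. Writing $c=[M_1\colon M_0]^{1/2}$ and using that $e_1$ is a self-adjoint projection,
$$[M_2\colon M_1]=\sum_{b\in B}(cbe_1)(cbe_1)^*=[M_1\colon M_0]\sum_{b\in B}be_1b^*=[M_1\colon M_0]\cdot 1,$$
where the last equality is precisely Proposition \ref{ppbasis}(i) applied to $B$. In particular the Watatani index element is the scalar $[M_1\colon M_0]$, which is a genuine scalar in $[1,\I)$ by condition (1) of the strong Markov hypothesis for $M_0\subset M_1$ together with the remark following Definition \ref{stronglymarkov}. This gives the asserted equality $[M_2\colon M_1]=[M_1\colon M_0]$.

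It remains to check condition (1): that $\Tr_3$ is finite and that $\tr_2=\Tr_3(1)^{-1}\Tr_3|_{M_2}$. By the remark following Definition \ref{stronglymarkov}, this is equivalent to $\Tr_3(1)=[M_2\colon M_1]\in[1,\I)$, so it suffices to evaluate $\Tr_3(1)$. Here I would expand $1=\sum_{\beta\in C}\beta e_2\beta^*$, the Pimsner--Popa relation for $M_2$ over $M_1$, and apply the defining property of the canonical trace from Definition \ref{canonicaltrace}, namely $\Tr_3(xe_2y)=\tr_2(xy)$, to obtain
$$\Tr_3(1)=\sum_{\beta\in C}\tr_2(\beta\beta^*)=\tr_2\Big(\sum_{\beta\in C}\beta\beta^*\Big)=\tr_2([M_2\colon M_1])=[M_1\colon M_0],$$
using the index computation above and that $\tr_2$ is a state. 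Since $[M_1\colon M_0]\in[1,\I)$, this shows $\Tr_3$ is finite and that the Markov normalization holds, completing condition (1).

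The step I expect to require the most care is not any single calculation but the bookkeeping that lets the remark following Definition \ref{stronglymarkov} apply at the next level of the tower: one must be sure that the canonical trace $\Tr_3$ of the iterated basic construction really is the object governed by that remark, and that its value on $1$ is computed from the correct Pimsner--Popa relation for $M_2$ over $M_1$ rather than the one for $M_1$ over $M_0$. Once the index is seen to be the scalar $[M_1\colon M_0]$, everything else is a formal consequence of Lemma \ref{TL} and the defining property of the canonical semifinite trace.
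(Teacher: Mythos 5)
Your proposal is correct, but it is organized differently from the paper's proof. The paper establishes condition (1) of Definition \ref{stronglymarkov} head-on: using the relation $e_1e_2e_1=[M_1\colon M_0]^{-1}e_1$ from Lemma \ref{TL}(2) together with $\sum_{b\in B}be_1b^*=1$, it writes any $x\in M_2$ as $[M_1\colon M_0]\sum_{b\in B}xbe_1e_2e_1b^*$ and applies the defining property of $\Tr_3$ to obtain the identity $\Tr_3(x)=[M_1\colon M_0]\tr_2(x)$ on all of $M_2$; the Markov extension property is thus verified by direct computation, and the remark after Definition \ref{stronglymarkov} is used only afterwards, in its forward direction, to identify the number $\Tr_3(1)$ with the index $[M_2\colon M_1]$. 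You never evaluate $\Tr_3$ on a general element: instead you compute the two scalars $[M_2\colon M_1]=\sum_{\beta\in C}\beta\beta^*$ and $\Tr_3(1)$ directly from the basis $C$ of Lemma \ref{TL}(3), find that both equal $[M_1\colon M_0]$, and then invoke the remark in its converse (and less trivial) direction to deduce the extension property from the scalar identity $\Tr_3(1)=[M_2\colon M_1]\in[1,\I)$. Both routes are legitimate, since the remark is stated as an equivalence recalled from \cite{MR1278111}, and both obtain condition (2) from Lemma \ref{TL}(3). What your route buys is brevity and the complete avoidance of Lemma \ref{TL}(2), together with an index computation that comes straight from Watatani's definition rather than being read off from the remark; what the paper's route buys is self-containedness, since it proves the trace-extension property by hand (yielding the stronger and independently useful formula $\Tr_3|_{M_2}=[M_1\colon M_0]\tr_2$) and leans on the cited equivalence only to name $\Tr_3(1)$ as the index.
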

\begin{proof}
Note $M_3=M_2 e_2 M_2$ by Proposition \ref{existence} and Lemma \ref{TL}, so the canonical trace $\Tr_3$ on $M_3$ is finite. By Definition \ref{canonicaltrace} and Lemma \ref{TL}, if $x\in M_2$,
{\fontsize{11}{11}{
$$
\Tr_3(x)=[M_1\colon M_0]\sum\limits_{b\in B} \Tr_3(xbe_1e_2e_1b^*)
=[M_1\colon M_0]\sum\limits_{b\in B} \tr_2(xbe_1b^*)=[M_1\colon M_0]\tr_2(x).
$$}}
Hence $[M_2\colon M_1]=\Tr_3(1)=[M_1\colon M_0]$, and $\tr_3=[M_1\colon M_0]^{-1}\Tr_3$ extends $\tr_2$.
\end{proof}

\begin{rem}
Markov inclusions (possibly without Pimsner-Popa bases) have been studied by Jolissaint \cite{MR1073519}, Popa \cite{MR860811}, \cite{MR1278111}, and more. The adjective ``strongly" in the term ``strongly Markov" comes from Definition 3.6 in \cite{MR945550}, where they define the notion of ``fortement d'indice fini" for a conditional expectation. This notion translates as the existence of a finite Pimsner-Popa basis.
\end{rem}

\begin{quest}
It is unknown to the authors at this point whether condition (1) implies condition (2) in Definition \ref{stronglymarkov}. This is the case for connected inclusions with atomic centers \cite{MR1073519}. It is unknown to the authors for connected inclusions with diffuse centers.
\end{quest}

We provide a useful lemma for working with strongly Markov inclusions, which is similar to Lemma 5.8 of \cite{MR1073519} and Lemma 5.3.1 in \cite{MR1473221}.
\begin{lem}\label{basicconstruction}
Suppose $M_0\subset (M_1,\tr_1)$ is strongly Markov. Suppose there is a von Neumann algebra $P\subset B(L^2(M_1,\tr_1))$ containing $M_1$, $\tr_P$ is a trace on $P$ extending $M_1$, and $p\in P$ is a projection such that $E_{M_1}(p)=d^{-2}$. If
\item[(1)] $pwp=E_{M_0}(w)p$ for all $w\in M_1$ and
\item[(2)] the map $M_0\to M_0p$ by $z\mapsto zp$ is injective, 
\item
then $\psi\colon M_1\otimes_{M_0} M_1\to M_1 pM_1$ by $x\otimes y\mapsto x p y$ is an $M_1$-bilinear isomorphism of $*$-algebras. In particular, the map $\widetilde{\psi}\colon M_1pM_1\to M_1e_1M_1=M_2$ by $xp y\mapsto xe_1 y$ gives a well defined $M_1$-bilinear isomorphism of $*$-algebras which sends $p$ to $e_1$. Moreover, if $\langle M_1,p\rangle=M_1pM_1$, then $\widetilde{\psi}$ is a trace-preserving isomorphism of von Neumann algebras which fixes $M_1$.
\end{lem}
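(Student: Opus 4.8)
The plan is to show that $\psi$ is a well-defined injective $*$-homomorphism onto $M_1pM_1$, and then to obtain $\widetilde\psi$ by post-composing $\psi^{-1}$ with the $*$-algebra isomorphism $\Phi\colon M_1\otimes_{M_0}M_1\to M_2$, $x\otimes y\mapsto xe_1y$, supplied by Proposition \ref{existence} (applicable since strong Markovness yields a Pimsner--Popa basis $B$). The first observation, used throughout, is that condition (1) forces $p$ to commute with $M_0$: for $z\in M_0$ we get $pzp=E_{M_0}(z)p=zp$, while applying the same identity to $z^*$ and taking adjoints (using $p=p^*$) gives $pzp=pz$, so $zp=pz$.

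Well-definedness and the homomorphism property are then routine. Since $p$ commutes with $M_0$, the $\C$-bilinear map $x\otimes y\mapsto xpy$ is balanced over $M_0$ and descends to $M_1\otimes_{M_0}M_1$, and $M_1$-bilinearity is immediate. For multiplicativity one computes $xpy\cdot x'py'=xp(yx')py'=x\,E_{M_0}(yx')\,py'=xp\,E_{M_0}(yx')y'$ by condition (1) together with $[p,M_0]=0$, which is exactly $\psi$ applied to the product $x\otimes E_{M_0}(yx')y'$ of Remark \ref{basisproperties}; the adjoint formula matches because $p=p^*$. Surjectivity onto $M_1pM_1$ is immediate.

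The main obstacle is injectivity, and this is where condition (2) enters. Using the reproducing identity for $B$, every element of $M_1\otimes_{M_0}M_1$ has the form $\sum_b b\otimes c_b$, so it suffices to show $\sum_b bpc_b=0$ forces $\sum_b be_1c_b=0$ (the latter being $\Phi$ of the element). I would extract information by sandwiching with $p$: left-multiplying by $pa^*$ and using condition (1) gives $p\,d_a=0$ with $d_a:=\sum_b E_{M_0}(a^*b)c_b\in M_1$; right-multiplying $pd_a=0$ by $c^*p$ and applying condition (1) again gives $E_{M_0}(d_ac^*)\,p=0$, whence condition (2) yields $E_{M_0}(d_ac^*)=0$ for all $c\in M_1$, and faithfulness of $E_{M_0}$ (take $c=d_a$) forces $d_a=0$. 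Thus $\sum_b E_{M_0}(a^*b)c_b=0$ for every $a$. Finally $e_1a^*\big(\sum_b be_1c_b\big)=e_1 d_a=0$ for all $a$; specializing $a=b$ and expanding against $1=\sum_b be_1b^*$ gives $\sum_b be_1c_b=0$. (Since $\tr_P$ is available, one may instead compare traces: $\tr_P(xpy)=\tr_P(yxp)=\tr_1\!\big(E_{M_1}(yxp)\big)=d^{-2}\tr_1(xy)$, so $\tr_P\circ\psi$ is a positive multiple of the faithful trace $\tr_2\circ\Phi$ on $M_1\otimes_{M_0}M_1$, forcing $\ker\psi=0$.)

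With $\psi$ an isomorphism, $\widetilde\psi=\Phi\circ\psi^{-1}$ is a well-defined $M_1$-bilinear $*$-isomorphism sending $p=\psi(1\otimes1)$ to $\Phi(1\otimes1)=e_1$. For the last claim, assume $\langle M_1,p\rangle=M_1pM_1$. Then $1\in M_1\subset M_1pM_1$, so the unit of $M_1pM_1$, namely $\psi\big(\sum_b b\otimes b^*\big)=\sum_b bpb^*$, equals $1$; applying $E_{M_1}$ and using $E_{M_1}(p)=d^{-2}$ gives $d^{-2}\sum_b bb^*=1$, i.e. $d^2=[M_1\colon M_0]$. Expanding $w=\sum_b wb\,p\,b^*$ for $w\in M_1$ shows $\widetilde\psi(w)=w$, so $\widetilde\psi$ fixes $M_1$; and $\tr_P(xpy)=d^{-2}\tr_1(xy)=[M_1\colon M_0]^{-1}\tr_1(xy)=\tr_2(xe_1y)$ shows $\widetilde\psi$ is trace-preserving, hence normal, giving the desired isomorphism of von Neumann algebras.
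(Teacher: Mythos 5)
Your proof is correct, and its core mechanism is the paper's: injectivity is extracted by multiplying $\sum x_ipy_i=0$ by $p$ and elements of $M_1$, converting products into $E_{M_0}$'s via condition (1), cancelling $p$ via condition (2), and reassembling with the Pimsner--Popa reproducing identity. The differences are in the routing. The paper sandwiches two-sidedly, computing $px\left(\sum_i x_ipy_i\right)yp$ to get $\sum_i E_{M_0}(xx_i)E_{M_0}(y_iy)=0$ in $M_0$ for all $x,y\in M_1$, and then concludes $\sum_i x_i\otimes y_i=\sum_{a,b}\sum_i a\otimes E_{M_0}(a^*x_i)E_{M_0}(y_ib)b^*=0$ directly inside $M_1\otimes_{M_0}M_1$, never mentioning $e_1$. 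You instead work one side at a time (obtaining $d_a=0$ via faithfulness of $E_{M_0}$) and then pass through $e_1$ and the injectivity of $\Phi$; this detour through $M_2$ is actually unnecessary, since once $d_a=\sum_b E_{M_0}(a^*b)c_b=0$ for all $a$, the reproducing identity already gives $\sum_b b\otimes c_b=\sum_a a\otimes d_a=0$ in the tensor product, which is exactly the paper's ending. Your parenthetical trace-comparison argument ($\tr_P\circ\psi$ is a positive multiple of the faithful $\tr_2\circ\Phi$, and $\psi$ is already a $*$-homomorphism, so $\ker\psi=0$) is a genuinely different and slicker route to injectivity, and it is valid given the standing convention that traces are faithful and normal. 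Finally, you supply details the paper dismisses as clear or immediate: that condition (1) forces $[p,M_0]=0$ (needed for $\psi$ to be balanced over $M_0$), and, in the last claim, the identity $\sum_b bpb^*=1$, its consequence $d^2=[M_1\colon M_0]$, the fact that $\widetilde\psi$ fixes $M_1$, and trace preservation; all of these check out and make explicit why the von Neumann algebra statement ``follows immediately.''
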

\begin{proof}
Clearly $\psi$ is surjective and preserves the $*$-algebra structure. Suppose 
$$
\psi\left( \sum\limits_{i=1}^k x_i \otimes y_i\right)=\sum\limits_{i=1}^k x_i f_1 y_i =0.
$$
Then for all $x,y\in M_1$, $\sum\limits_{i=1}^k E_{M_0} (xx_i)E_{M_0}(y_iy)=0$, so by Remark \ref{basisproperties},
{\fontsize{11}{11}{
$$
\sum\limits_{i=1}^k x_i \otimes y_i=\sum\limits_{a\in B} a\otimes a^*\left(\sum\limits_{i=1}^k x_i \otimes y_i\right)\sum\limits_{b\in B} b\otimes b^*=\sum\limits_{a,b\in B}\sum\limits_{i=1}^k a\otimes E_{M_0}(a^*x_i)E_{M_0}(y_i b)b^*=0,
$$}}
where $B=\{b\}$ is a Pimsner-Popa basis for $M_1$ over $M_0$. The last claim follows immediately.
\end{proof}

\begin{defn}
We say an inclusion $M_0\subset M_1\subset (P,\tr_P,p)$ as above is \underline{standard} if it satisfies the conditions of Lemma \ref{basicconstruction}.
\end{defn}

\subsection{The Jones tower and tensor products}\label{prelim}
For the rest of this section, let $M_0\subset (M_1,\tr_1)$ be a strongly Markov inclusion of finite von Neumann algebras, and set $d=[M_1\colon M_0]^{1/2}$. For $n\in\N$, inductively define the basic construction 
$$M_{n+1}=\langle M_n,e_n\rangle=M_n e_n M_n\subset B(L^2(M_n,\tr_n))$$ 
with canonical trace $\tr_{n+1}$ extending $\tr_n$ and satisfying $\tr_{n+1}(xe_n)=d^{-2} \tr_n(x)$ for all $x\in M_n$ where $e_n\in B(L^2(M_n,\tr_n))$ is the Jones projection with range $L^2(M_{n-1},\tr_{n-1})$.
For $n\in\N$, set $E_n=d e_n$ and $v_n=E_nE_{n-1}\cdots E_1$.

\begin{fact} The $E_i$'s satisfy the Temperley-Lieb relations:
\item[(i)] $E_i^2=dE_i=dE_i^*$,
\item[(ii)] $E_iE_j=E_jE_i$ for $|i-j|>1$, and
\item[(iii)] $E_iE_{i\pm 1}E_i=E_i$.
\end{fact}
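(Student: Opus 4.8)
The plan is to reduce all three relations to properties of the Jones projections $e_i$ that are already in hand, using only $E_i = d e_i$.

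For (i), I use only that each $e_i$ is the orthogonal projection onto $L^2(M_{i-1},\tr_{i-1})$, hence self-adjoint and idempotent, $e_i^* = e_i$ and $e_i^2 = e_i$. Scaling by $d$ gives $E_i^* = d e_i = E_i$ and $E_i^2 = d^2 e_i^2 = d^2 e_i = d E_i$, which is exactly (i). For (ii), assume without loss of generality $i < j$ with $j \geq i+2$. The projection $e_i$ is a generator of the basic construction $M_{i+1} = \langle M_i, e_i\rangle$, so $e_i \in M_{i+1}$; since $i+1 \leq j-1$ we have $M_{i+1} \subseteq M_{j-1}$, whence $e_i \in M_{j-1}$. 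On the other hand $e_j$ is the orthogonal projection onto the $M_{j-1}$-submodule $L^2(M_{j-1},\tr_{j-1})$ of $L^2(M_j,\tr_j)$, so it commutes with the left action of $M_{j-1}$ (equivalently $x e_j = e_j x$ for $x \in M_{j-1}$, since $e_j x e_j = E_{M_{j-1}}(x) e_j = x e_j$ and $x$ preserves the range of $e_j$). Therefore $e_i e_j = e_j e_i$, and multiplying by $d^2$ yields $E_i E_j = E_j E_i$.

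For (iii), the content is the local relation $e_i e_{i\pm 1} e_i = d^{-2} e_i$. This is precisely Lemma~\ref{TL}(2) in the base case $i=1$, recalling $[M_1\colon M_0] = d^2$. For general $i$ I invoke the preceding theorem together with an induction to conclude that every consecutive inclusion $M_k \subset (M_{k+1},\tr_{k+1})$ is again strongly Markov with the same index $d^2$; applying Lemma~\ref{TL}(2) to the shifted quadruple $M_{i-1}\subset M_i \subset M_{i+1}\subset M_{i+2}$ (respectively $M_{i-2}\subset M_{i-1}\subset M_i \subset M_{i+1}$) then gives $e_i e_{i+1} e_i = d^{-2} e_i$ (respectively $e_i e_{i-1} e_i = d^{-2} e_i$). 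Hence $E_i E_{i\pm 1} E_i = d^3 e_i e_{i\pm 1} e_i = d^3 \cdot d^{-2} e_i = d e_i = E_i$.

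Since each relation follows directly from results already established, there is no serious obstacle here; the only point requiring care is the bookkeeping in (iii), namely verifying that Lemma~\ref{TL}(2), proved at the bottom of the tower, transports verbatim to every height once one knows each step of the Jones tower is itself strongly Markov with the same index $d^2$. I would therefore state this shift-invariance of the strongly Markov property explicitly before deducing (iii).
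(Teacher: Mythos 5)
Your proof is correct, and it supplies exactly the justification the paper leaves implicit (the statement appears as a Fact with no proof): (i) is immediate from $e_i$ being an orthogonal projection, (ii) from $e_i\in M_{i+1}\subseteq M_{j-1}$ together with $e_j$ commuting with $M_{j-1}$, and (iii) from Lemma~\ref{TL}(2) transported up the tower. Your one point of care is the right one — the transport in (iii) is licensed by the theorem that $M_1\subset (M_2,\tr_2)$ is again strongly Markov with $[M_2\colon M_1]=[M_1\colon M_0]$, which by induction makes every consecutive inclusion $M_k\subset (M_{k+1},\tr_{k+1})$ strongly Markov with index $d^2$ before the Fact is invoked, so there is no circularity.
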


\begin{prop}
Suppose $N\subset M\subset (P,\tr_P)$, and suppose $A=\{a\}$ is a Pimsner-Popa basis for $P$ over $M$ and $B=\{b\}$ is a Pimsner-Popa basis for $M$ over $N$. Then $AB=\set{ab}{a\in A\text{ and }b\in B}$ is a Pimsner-Popa basis for $P$ over $N$.
\end{prop}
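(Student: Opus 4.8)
The plan is to verify characterization (ii) of Proposition \ref{ppbasis} directly for the family $AB$, indexed by the product set $A\times B$ (which is finite, so $AB$ is a finite subset of $P$). That is, writing $E_M\colon P\to M$ and $E_N\colon P\to N$ for the trace-preserving conditional expectations, I want to show that
$$
x=\sum_{a\in A}\sum_{b\in B}(ab)\,E_N\bigl((ab)^*x\bigr)\qquad\text{for all }x\in P,
$$
after which the equivalence in Proposition \ref{ppbasis} upgrades this to the full Pimsner--Popa condition.

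The computation rests on two standard facts about conditional expectations for the nested inclusion $N\subset M\subset P$: first, the tower property $E_N=E_N\circ E_M$, together with the observation that $E_N|_M$ is precisely the conditional expectation $M\to N$ used by the basis $B$; and second, the $M$-bimodularity of $E_M$. Starting from the right-hand side and using $(ab)^*=b^*a^*$ with $b^*\in M$, I would rewrite each summand as
$$
ab\,E_N(b^*a^*x)=ab\,E_N\bigl(E_M(b^*a^*x)\bigr)=ab\,E_N\bigl(b^*E_M(a^*x)\bigr).
$$
Now I fix $a$ and set $y=E_M(a^*x)\in M$. The inner sum over $b$ becomes $a\sum_{b\in B}b\,E_N(b^*y)=ay$ by the basis property (ii) of $B$ applied to $y\in M$. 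Substituting back gives $\sum_{a\in A}a\,E_M(a^*x)$, which equals $x$ by the basis property (ii) of $A$ applied to $x\in P$. This establishes the identity.

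I do not expect a genuine obstacle here: the result is a transitivity statement, and the only real content is organizing the double sum so that the $B$-basis identity is applied \emph{inside} $M$ and the $A$-basis identity \emph{outside} in $P$. The one point requiring a moment's care is the correct invocation of the tower property $E_N=E_N\circ E_M$ and the bimodularity step $E_M(b^*a^*x)=b^*E_M(a^*x)$, since these are what allow the nested bases to decouple; everything else is bookkeeping. (Alternatively, one could phrase the same argument through characterization (i), verifying $1=\sum_{a,b}(ab)e(ab)^*$ for the appropriate Jones projection, but the expectation-based route above is the most transparent.)
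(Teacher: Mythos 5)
Your proposal is correct and is essentially identical to the paper's proof: both verify condition (ii) of Proposition \ref{ppbasis} via the tower property $E_N = E_N\circ E_M$ and the $M$-bimodularity $E_M(b^*a^*x)=b^*E_M(a^*x)$, then collapse the inner sum over $B$ inside $M$ and the outer sum over $A$ in $P$. No differences worth noting.
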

\begin{proof}
For all $x\in P$, 
{\fontsize{10}{10}{
$$
\sum\limits_{ab\in AB} abE_N^P(b^*a^*x)=\sum\limits_{a,b} abE^M_N(E^P_M(b^*a^*x))=\sum\limits_{a,b} abE^M_N(b^*E^P_M(a^*x))=\sum\limits_{a} aE^P_M(a^*x)=x.
$$}}
\end{proof}

\begin{cor}
$M_k\subset (M_n,\tr_n)$ is strongly Markov for all $0\leq k\leq n$.
\end{cor}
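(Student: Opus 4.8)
The plan is to verify the two conditions of Definition \ref{stronglymarkov} for the inclusion $M_k\subset (M_n,\tr_n)$ by reducing everything to the one-step case already settled by the Theorem above, combined with the multiplicativity of the Watatani index along the tower. First I would observe, by induction on the tower using that Theorem, that every consecutive inclusion $M_i\subset (M_{i+1},\tr_{i+1})$ is strongly Markov with scalar index $[M_{i+1}\colon M_i]=d^2$. The base case $M_0\subset M_1$ is our standing hypothesis, and the Theorem applied to the strongly Markov inclusion $M_{i-1}\subset M_i$ propagates the strong Markov property to $M_i\subset M_{i+1}$ while preserving the index, so each step carries the same scalar index $d^2=[M_1\colon M_0]\in[1,\I)$.

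Next I would produce condition (2), a Pimsner-Popa basis for $M_n$ over $M_k$. Choosing a Pimsner-Popa basis $B_i$ for $M_i$ over $M_{i-1}$ for each $k<i\le n$ (which exists by the previous paragraph) and applying the preceding Proposition repeatedly along the chain $M_k\subset M_{k+1}\subset\cdots\subset M_n$ yields the product set $C=\set{b_nb_{n-1}\cdots b_{k+1}}{b_i\in B_i}$ as a Pimsner-Popa basis for $M_n$ over $M_k$. Since such a basis exists, Proposition \ref{existence} gives $\langle M_n,f\rangle=M_nfM_n$ for the Jones projection $f$ of $M_k\subset M_n$; exactly as in the proof of the Theorem above, this forces the canonical trace $\widetilde{\Tr}$ on $\langle M_n,f\rangle$ to be finite, which is the finiteness half of condition (1).

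For the remaining Markov trace-compatibility I would compute the Watatani index through the product basis $C$. By the Remark following Definition \ref{stronglymarkov}, the normalized canonical trace $\widetilde{\Tr}(1)^{-1}\widetilde{\Tr}$ extends $\tr_n$ precisely when $[M_n\colon M_k]\in[1,\I)$ as a scalar. Writing $[M_n\colon M_k]=\sum_{c\in C}cc^*=\sum_{b_i\in B_i}b_n\cdots b_{k+1}b_{k+1}^*\cdots b_n^*$ and using that each one-step index $\sum_{b_i\in B_i}b_ib_i^*=[M_i\colon M_{i-1}]=d^2$ is a central scalar, the inner sums collapse one factor at a time, giving $[M_n\colon M_k]=d^{2(n-k)}\in[1,\I)$. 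This supplies condition (1) and finishes the proof.

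The delicate point I expect to be the main obstacle is the collapsing of the index sum: it depends crucially on each one-step Watatani index being a central scalar, which is exactly what the strong Markov property established in the first paragraph provides at every stage, and without which the telescoping of $\sum_c cc^*$ would fail. One must also take care that the Jones projection $f$ and canonical trace attached to the composite inclusion $M_k\subset M_n$ are the correct objects for the Remark to apply, and that the product-basis argument uses the conditional-expectation tower consistently, but these are routine once the scalar index computation is in hand.
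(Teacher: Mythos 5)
Your proposal is correct and follows exactly the route the paper intends: the Corollary is stated immediately after the product-basis Proposition precisely because condition (2) follows by composing one-step Pimsner--Popa bases along the tower (each step being strongly Markov with index $d^2$ by iterating the Theorem), and condition (1) follows from the telescoping computation $[M_n\colon M_k]=d^{2(n-k)}\in[1,\infty)$ together with the Remark from \cite{MR1278111}, just as in the proof of the Theorem. Nothing in your argument deviates from or adds beyond the paper's implicit proof.
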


Forms of the next lemma appear in \cite{MR1073519},\cite{MR1424954}:
\begin{prop}[Multistep Basic Construction]\label{multistep}
For all $0\leq k\leq n$, let $e_{n-k}^n\in B(L^2(M_{n},\tr_{n}))$ be the projection with range $L^2(M_{n-k},\tr_{n-k})$. There is an isomorphism $\langle M_{n}, e_{n-k}^n\rangle=M_{n} e_{n-k}^n M_{n}\cong M_{n+k}$ which fixes $M_{n}$ and sends $e_{n-k}^n$ to 
$$
f_{n-k}^n=d^{k(k-1)}(e_{n}e_{n-1}\cdots e_{n-k+1})(e_{n+1}e_{n+1}\cdots e_{n-k+2})\cdots(e_{n+k-1}e_{n+k-2}\cdots e_{n}).
$$
Hence the inclusion $M_{n-k}\subset M_{n}\subset (M_{n+k},\tr_{n+k},f^n_{n-k})$ is standard. (See Remark \ref{multisteppicture}).
\end{prop}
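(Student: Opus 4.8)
The plan is to verify directly that the inclusion $M_{n-k}\subset M_n\subset(M_{n+k},\tr_{n+k},f^n_{n-k})$ satisfies the hypotheses of Lemma \ref{basicconstruction}, playing $M_{n-k}\subset M_n$ in the role of $M_0\subset M_1$ and $M_{n+k}$ in the role of $P$. By the Corollary, $M_{n-k}\subset M_n$ is strongly Markov, and by the composition-of-bases Proposition (together with each step index being $d^2$) its index is $[M_n:M_{n-k}]=d^{2k}$, so the relevant scalar is $D=d^k$ and I must produce $E_{M_n}(f^n_{n-k})=d^{-2k}$. Once the hypotheses are checked, Lemma \ref{basicconstruction} supplies a trace-preserving $*$-isomorphism $M_n e^n_{n-k}M_n=\langle M_n,e^n_{n-k}\rangle\to M_n f^n_{n-k}M_n$ fixing $M_n$ and carrying $e^n_{n-k}$ to $f^n_{n-k}$, and the hypothesis $\langle M_n,f^n_{n-k}\rangle=M_nf^n_{n-k}M_n=M_{n+k}$ identifies the target as $M_{n+k}$; inverting this map gives the asserted isomorphism, and ``standard'' is by definition the assertion that these hypotheses hold.

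Writing $f=f^n_{n-k}$, the substantive input is a package of three algebraic identities, which I would establish together by induction on $k$ (the case $k=1$ being $f=e_n$, i.e.\ the definition $M_{n+1}=\langle M_n,e_n\rangle$): that $f$ is self-adjoint, that $E_{M_n}(f)=d^{-2k}$, and that $fwf=E_{M_{n-k}}(w)f$ for all $w\in M_n$. The only tools needed are the Temperley--Lieb relations of the Fact --- $e_ie_j=e_je_i$ for $|i-j|>1$ and $e_ie_{i\pm1}e_i=d^{-2}e_i$ --- together with the relation $E_{M_m}(xe_my)=d^{-2}xy$ for $x,y\in M_m$ and the fact that $e_m$ commutes with $M_{m-1}$. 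Self-adjointness is a direct consequence of the commutation relations, since reversing the word defining $f$ returns it to itself; granting this, the identity $fwf=E_{M_{n-k}}(w)f$ at $w=1$ yields $f^2=f$, so the projection hypothesis comes for free. Condition (2) of Lemma \ref{basicconstruction} is also automatic once $E_{M_n}(f)=d^{-2k}$: if $z\in M_{n-k}$ and $zf=0$, then $d^{-2k}zz^*=zE_{M_n}(f)z^*=E_{M_n}(zfz^*)=0$, forcing $z=0$.

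The inductive step is where the combinatorics lives, and it is the main obstacle. Passing from $k-1$ to $k$ adjoins exactly two new Jones projections, $e_{n-k+1}$ at the bottom and $e_{n+k-1}$ at the top, to the word for $f^n_{n-k+1}$, and the task is to show that the prescribed nested product with normalization $d^{k(k-1)}$ collapses under repeated use of $e_ie_{i\pm1}e_i=d^{-2}e_i$ to give precisely $E_{M_{n-k}}(w)f$ and $E_{M_n}(f)=d^{-2k}$. Tracking which intermediate subproducts lie in which $M_m$ (so that the correct conditional expectation is produced), and checking that the number of absorbed factors exactly matches the power of $d$ in the normalization, is the delicate bookkeeping; I expect most of the effort to go here.

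The remaining point, $M_nfM_n=M_{n+k}$, I would settle by the same induction. By Proposition \ref{existence}(iii) one has $M_{n+k}=M_{n+k-1}e_{n+k-1}M_{n+k-1}$; substituting the inductive identity $M_{n+k-1}=M_nf^n_{n-k+1}M_n$ and moving $M_n$ across $e_{n+k-1}$ (which commutes with $M_n$ when $k\geq2$) reduces the claim to the same collapse relations used above. This simultaneously shows that $M_nfM_n$ is weakly closed, which is exactly the hypothesis required for the ``moreover'' clause of Lemma \ref{basicconstruction}, and thereby completes the verification that the inclusion is standard.
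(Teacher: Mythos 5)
Your overall skeleton is the same as the paper's: verify the hypotheses of Lemma \ref{basicconstruction} for $M_{n-k}\subset M_n\subset (M_{n+k},\tr_{n+k},f^n_{n-k})$ and then invoke that lemma. Within that skeleton, your injectivity argument is correct and actually cleaner than the paper's: the paper deduces injectivity of $z\mapsto zf^n_{n-k}$ from the central support of $f^n_{n-k}$ being $1$ (which it gets only after proving $M_nf^n_{n-k}M_n=M_{n+k}$), whereas your one-line computation $zf=0\Rightarrow d^{-2k}zz^*=zE_{M_n}(f)z^*=E_{M_n}(zfz^*)=0\Rightarrow z=0$ needs only $E_{M_n}(f)=d^{-2k}$ and faithfulness. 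Deferring the two identities $fwf=E_{M_{n-k}}(w)f$ and $E_{M_n}(f)=d^{-2k}$ to a Temperley--Lieb induction is also at the same level of detail as the paper's ``one checks.''

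The genuine gap is in your last step, $M_nf^n_{n-k}M_n=M_{n+k}$. After substituting $M_{n+k-1}=M_nf'M_n$ (with $f'=f^n_{n-k+1}$) and commuting $M_n$ past $e_{n+k-1}$, what remains is to show $f'e_{n+k-1}wf'\in\spann M_nf^n_{n-k}M_n$ for \emph{every} $w\in M_n$, and this is not ``the same collapse relations used above.'' Already at $k=2$, $w=1$: the claim is $e_ne_{n+1}e_n=d^{-2}e_n\in\spann M_n\bigl(d^2e_ne_{n-1}e_{n+1}e_n\bigr)M_n$. The tools you list (Temperley--Lieb relations, $E_{M_m}(xe_my)=d^{-2}xy$, commutation of $e_m$ with $M_{m-1}$) can only merge and absorb Jones projections; they can never manufacture the missing $e_{n-1},\dots,e_{n-k+1}$ sitting inside $f^n_{n-k}$, nor produce $1$, $e_n$, or $e_{n+1}$ out of $f^n_{n-k}$. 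Indeed, diagrammatically any combination $uf^n_{n-k}v$ with Temperley--Lieb coefficients $u,v$ has at most $n-k$ through-strands, while $e_n$, $e_{n+1}$ and $1$ have more, so no amount of collapsing closes this step: one must genuinely use the algebra $M_n$, via a Pimsner--Popa resolution of the identity such as $1=\sum_c ce_{n-1}c^*$ for $c$ ranging over a basis of $M_{n-1}$ over $M_{n-2}$. This is exactly what the paper's key identity packages: for $B$ a Pimsner--Popa basis of $M_{n-k+j}$ over $M_{n-k}$ one has $\sum_{b\in B}bf^n_{n-k}b^*=f^n_{n-k+j}$; with $j=1$ this exhibits $e_n\in\spann M_nf^n_{n-k}M_n$, with $j=k$ it exhibits $1$, and from there the span is an algebra containing $M_{n+1}$ and (after one more extraction) the higher Jones projections. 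Concretely, for $k=2$ your induction can be repaired by first checking the Temperley--Lieb identity $e_{n+1}e_{n-1}=d^2\,e_{n-1}e_n\,f^n_{n-2}\,e_ne_{n-1}$ and then extracting $e_{n+1}=\sum_c c\,(e_{n+1}e_{n-1})\,c^*$; but the Pimsner--Popa bases you must insert here are precisely the ingredient your toolkit omits and the paper's proof is built on.
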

\begin{proof}
First, if $B=\{b\}$ is a Pimsner-Popa basis for $M_{n-k+j}$ over $M_{n-k}$, then $\sum_{b\in B}bf_{n-k}^nb^*=f^n_{n-k+j}$, so $M_n f^n_{n-k} M_n=M_{n+k}$ as $f^n_n=1$. This implies $f^n_{n-k}\in M_{n-k}'\cap B(L^2(M_,\tr_n))$ has central support $1$, so $y\mapsto yf^k_{n-k}$ is injective on $M_{n-k}$. Finally, one checks $f^n_{n-k}xf^n_{n-k}=E_{M_{n-k}}(x)f^n_{n-k}$ for all $x\in M_n$ and $E_{M_{n}}(f^n_{n-k})=d^{-2k}$, and the result follows by Lemma \ref{basicconstruction}.
\end{proof}

\begin{rem}\label{global}
Note that $L^2(M_n,\tr_n)$ has left and right actions of $M_0,\dots,M_{2n}$, where $M_i$ acts on the right as $JM_iJ\cong M_i^{\text{op}}$ (we will write $J$ instead of $J_n$). Note that $M_i'=JM_{2n-i}J$, so we define a canonical trace on $M_i'\cap B(L^2(M_n,\tr_n))$ by $\tr_i'(x)=\tr_{2n-i}(Jx^*J)$ for all $x\in M_i'\cap B(L^2(M_n,\tr_n))$. 
\end{rem}

\begin{prop}\label{multiconditional}
The canonical trace-preserving conditional expectation $M_{n+k}\to M_{n+k-i}$ is given by $xf^n_{n-k}y\mapsto d^{-2i}xf^{n}_{n-k+i}y$ where $x,y\in M_n$. The canonical trace-preserving conditional expectation $M_{n-k}'=JM_{n+k}J\to JM_{n+k-i}J=M_{n-k+i}'$ is given by the same formula, only with $x,y\in M_n'=JM_nJ$.
\end{prop}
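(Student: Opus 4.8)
The plan is to realize both algebras inside $B(L^2(M_n,\tr_n))$ via the multistep basic construction and then to pin down the formula by uniqueness of the trace-preserving conditional expectation. By Proposition \ref{multistep} we have $M_{n+k}=M_nf^n_{n-k}M_n$, and applying the same proposition with $k-i$ in place of $k$ gives $M_{n+k-i}=M_nf^n_{n-k+i}M_n$ as the intermediate subalgebra $M_{n+k-i}\subset M_{n+k}$. Because the underlying Jones projections have nested ranges $L^2(M_{n-k})\subseteq L^2(M_{n-k+i})$, i.e. $e^n_{n-k}\le e^n_{n-k+i}$, the coherence of these isomorphisms yields the subprojection relation $f^n_{n-k}f^n_{n-k+i}=f^n_{n-k}$, while Lemma \ref{basicconstruction}(1) applied to the standard inclusion $M_{n-k+i}\subset M_n\subset(M_{n+k-i},\tr_{n+k-i},f^n_{n-k+i})$ gives the collapse $f^n_{n-k+i}\,w\,f^n_{n-k+i}=E_{M_{n-k+i}}(w)f^n_{n-k+i}$ for $w\in M_n$. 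I would then define the candidate $\Phi\colon M_{n+k}\to M_{n+k-i}$ on a decomposition $z=\sum_j x_jf^n_{n-k}y_j$ (with $x_j,y_j\in M_n$) by $\Phi(z)=d^{-2i}\sum_j x_jf^n_{n-k+i}y_j$, which manifestly lands in $M_{n+k-i}$.

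The essential computational input is the trace formula $\tr_{n+k}(xf^n_{n-k}y)=d^{-2k}\tr_n(xy)$ for $x,y\in M_n$, which follows from the identity $E_{M_n}(f^n_{n-k})=d^{-2k}$ noted in the proof of Proposition \ref{multistep}, the $M_n$-bimodularity of $E_{M_n}$, and the relation $\tr_{n+k}=\tr_n\circ E_{M_n}$; restricting to $M_{n+k-i}$ gives the same formula with $k-i$ in place of $k$. Using these, I would verify the defining relation $\tr_{n+k}(\Phi(z)w)=\tr_{n+k}(zw)$ for every $w=af^n_{n-k+i}b\in M_{n+k-i}$. On the left one collapses $f^n_{n-k+i}(ya)f^n_{n-k+i}=E_{M_{n-k+i}}(ya)f^n_{n-k+i}$ and applies the trace formula at level $k-i$, obtaining $d^{-2k}\tr_n\!\big(x\,E_{M_{n-k+i}}(ya)\,b\big)$. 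On the right, cyclicity together with $f^n_{n-k+i}(bx)f^n_{n-k}=E_{M_{n-k+i}}(bx)f^n_{n-k}$ (using the subprojection relation and the same collapse) and the trace formula at level $k$ gives $d^{-2k}\tr_n\!\big(E_{M_{n-k+i}}(bx)\,ya\big)$. Both then reduce, via the standard identity $\tr_n(E(u)v)=\tr_n(uE(v))=\tr_n(E(u)E(v))$ for $E=E_{M_{n-k+i}}$ and traciality, to the symmetric expression $d^{-2k}\tr_n\!\big(E_{M_{n-k+i}}(ya)\,E_{M_{n-k+i}}(bx)\big)$, so the two sides agree.

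This single computation does double duty. Well-definedness of $\Phi$ is automatic: if $z=\sum_j x_jf^n_{n-k}y_j=0$, then the identity just proved shows $\tr_{n+k}(\Phi(z)w)=\tr_{n+k}(zw)=0$ for all $w\in M_{n+k-i}$, so taking $w=\Phi(z)^*$ and using faithfulness of $\tr_{n+k}$ on $M_{n+k-i}$ forces $\Phi(z)=0$. The same identity then identifies $\Phi$ with $E_{M_{n+k-i}}$, since the trace-preserving conditional expectation onto $M_{n+k-i}$ is the unique linear map into $M_{n+k-i}$ with this property. For the commutant statement I would apply the antilinear $*$-antiautomorphism $x\mapsto Jx^*J$, which carries $M_{n+k}$ onto $M_{n-k}{}'=JM_{n+k}J$, carries $M_n$ onto $M_n{}'=JM_nJ$, and fixes each $f^n_{n-k}$ (these are products of the Jones projections $e_j$, and $Je_jJ=e_j$); since $\tr'_{n-k}(x)=\tr_{n+k}(Jx^*J)$ by Remark \ref{global}, the first formula transports verbatim to the second.

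The step needing the most care is the coherence of the multistep isomorphisms, namely that $M_nf^n_{n-k+i}M_n$ is literally the intermediate copy of $M_{n+k-i}$ inside $M_{n+k}=M_nf^n_{n-k}M_n$ and that $f^n_{n-k}\le f^n_{n-k+i}$. I expect this to follow cleanly from the inclusion $\langle M_n,e^n_{n-k+i}\rangle=(JM_{n-k+i}J)'\subseteq(JM_{n-k}J)'=\langle M_n,e^n_{n-k}\rangle$ on $L^2(M_n,\tr_n)$, which makes the two isomorphisms of Proposition \ref{multistep} restrictions of one another; once this is in place, all of the above relations hold and the remaining trace bookkeeping is routine.
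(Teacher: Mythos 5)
Your proof is correct and, for the first statement, follows essentially the same route as the paper: the paper's (much terser) proof rests on exactly your three ingredients --- the Markov identity $\tr_{n+k}(xf^n_{n-k}y)=d^{-2k}\tr_n(xy)$, the relation $f^{n}_{n-k+i}f^n_{n-k}=f^{n}_{n-k}$, and the collapse property coming from standardness --- and your direct verification of $\tr_{n+k}(\Phi(z)w)=\tr_{n+k}(zw)$ together with the faithfulness/uniqueness argument simply makes explicit the well-definedness and identification that the paper compresses into ``trace-preserving plus $M_{n+k-i}$-bilinear.''

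For the commutant statement the paper just reruns the same argument with $x,y\in M_n'$, whereas you transport the first statement by the antiautomorphism $x\mapsto Jx^*J$; that is a fine (and arguably cleaner) alternative, but one parenthetical justification in it is wrong. On $L^2(M_n,\tr_n)$ it is \emph{not} true that $Je_jJ=e_j$ for the Jones projections occurring in the product formula for $f^n_{n-k}$: rather $Je_jJ=e_{2n-j}$, and the identity you assert holds only for $j=n$. The fact you actually need, $Jf^n_{n-k}J=f^n_{n-k}$, is nevertheless true and follows immediately from your own coherence setup: under the multistep identification of Proposition \ref{multistep}, $f^n_{n-k}$ acts on $L^2(M_n,\tr_n)$ as the projection $e^n_{n-k}$ onto $L^2(M_{n-k},\tr_{n-k})$, and $J$ preserves that subspace (since $M_{n-k}$ is $*$-closed), hence commutes with $e^n_{n-k}$. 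Alternatively, one can check $Jf^n_{n-k}J=f^n_{n-k}$ directly from $Je_jJ=e_{2n-j}$ together with the commutation relations $e_ie_j=e_je_i$ for $|i-j|>1$. With that one justification repaired, the transport argument goes through (the antimultiplicativity of $x\mapsto Jx^*J$ merely swaps the roles of $x$ and $y$, which is harmless).
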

\begin{proof} We prove the first statement, as the second is similar. By the Markov property, for all $x,y\in M_n$,
$$
\tr_{n+k}(xf^n_{n-k}y)=d^{-2k}\tr_n(xy)=d^{-2i}\tr_{n+k-i}(xf^{n}_{n-k+i}y),
$$
so the map is trace-preserving. Now $M_{n+k-i}$-bilinearity follows from the following two facts:
\item[(i)] for all $1\leq i\leq k$, $M_{n-k}\subset M_{n-k+i}$, so $f^{n}_{n-k+i}f^n_{n-k}=f^{n}_{n-k}$, and
\item[(ii)] $E^{M_{n+k}}_{M_{n+k-i}}(f^{n}_{n-k})=d^{-2i}f^{n}_{n-k+i}$.
\end{proof}

We can now strengthen Proposition 2.7 from \cite{MR1424954}:
\begin{prop}\label{sumb}
The conditional expectation $E_{M_1'}\colon (M_0'\cap B(L^2(M_n,\tr_n)),\tr_0')\to (M_1'\cap B(L^2(M_n,\tr_n)),\tr_1')$ is given by
$$
E_{M_1'}(x)=\frac{1}{d^2}\sum\limits_{b\in B} bxb^*,
$$
where $B$ is a Pimsner-Popa basis for $M_1$ over $M_0$. In particular, the map is independent of choice of basis.
\end{prop}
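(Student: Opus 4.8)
The plan is to verify directly that the explicit map $\Phi(x):=d^{-2}\sum_{b\in B} bxb^*$ has the three properties characterizing the canonical trace-preserving conditional expectation $E_{M_1'}$: that it carries $M_0'\cap B(L^2(M_n,\tr_n))$ into $M_1'\cap B(L^2(M_n,\tr_n))$, that it is a unital, $M_1'$-bimodular, completely positive idempotent onto $M_1'$, and that it preserves $\tr_0'$. Only the last property is delicate; the first two are short manipulations with the Pimsner--Popa relations of Proposition \ref{ppbasis}.

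First I would show $\Phi$ lands in the commutant. Fix $x\in M_0'\cap B(L^2(M_n,\tr_n))$ and $m\in M_1$. Writing $b^*m\in M_1$ in the form $b^*m=\sum_{c\in B}E_{M_0}(b^*mc)c^*$ (condition (iii) of Proposition \ref{ppbasis}) and using that $E_{M_0}(b^*mc)\in M_0$ commutes with $x$, the sum becomes $\sum_b bxb^*m=\sum_{b,c}bE_{M_0}(b^*(mc))xc^*$. Applying condition (ii) in the form $\sum_b bE_{M_0}(b^*(mc))=mc$ collapses this to $\sum_c mc\,x\,c^*=m\sum_c cxc^*$. Hence $\Phi(x)m=m\Phi(x)$ for all $m\in M_1$, so $\Phi(x)\in M_1'\cap B(L^2(M_n,\tr_n))$.

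The conditional-expectation properties are then immediate. Since $\sum_b bb^*=[M_1\colon M_0]=d^2$ is a scalar in the strongly Markov case, $\Phi$ is unital; and for $y,z\in M_1'$ the basis elements $b\in M_1$ commute with $y,z$, so $byxzb^*=ybxb^*z$ and therefore $\Phi(yxz)=y\Phi(x)z$, giving $M_1'$-bimodularity and $\Phi|_{M_1'}=\id$. Complete positivity follows from $\Phi(x^*x)=d^{-2}\sum_b (xb^*)^*(xb^*)\ge0$ together with the usual matrix amplification. Thus $\Phi$ is a conditional expectation onto $M_1'\cap B(L^2(M_n,\tr_n))$, and it remains only to see it is the trace-preserving one.

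The hard part is trace-preservation, $\tr_0'\circ\Phi=\tr_0'$, and the difficulty is structural: the basis elements $b\in M_1$ do not lie in $M_0'$, so one cannot cyclically permute them under $\tr_0'$. My plan is to conjugate by $J$. Using $\tr_0'(z)=\tr_{2n}(Jz^*J)$ and $M_i'=JM_{2n-i}J$ from Remark \ref{global}, the claimed identity $\tr_0'(\Phi(x))=\tr_0'(x)$ is equivalent to the statement that $\xi\mapsto d^{-2}\sum_b (JbJ)\,\xi\,(JbJ)^*$ is the canonical conditional expectation $E^{M_{2n}}_{M_{2n-1}}$, evaluated at $\xi=Jx^*J\in M_{2n}$; here $\{JbJ\}\subset JM_1J=M_{2n-1}'$, and the reflection $Je_1J=e_{2n-1}$ transports the relation $\sum_b be_1b^*=1$ to $\sum_b (JbJ)e_{2n-1}(JbJ)^*=1$. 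I would then identify this averaging map with $E^{M_{2n}}_{M_{2n-1}}$---most efficiently by matching it against the conditional-expectation formula already produced in Proposition \ref{multiconditional} for the inclusion of commutants (the case $k=n$, $i=1$)---so that its trace-preservation is inherited from the Markov normalization $\tr_{k+1}(\cdot\,e_k)=d^{-2}\tr_k(\cdot)$. Once $\Phi$ is known to preserve $\tr_0'$, it agrees with the unique trace-preserving conditional expectation $E_{M_1'}$, and basis-independence is then automatic, since $E_{M_1'}$ is defined without reference to $B$.
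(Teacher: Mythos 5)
Your proof is correct, but it is packaged quite differently from the paper's, which is a two-line argument. Both proofs turn on the same computational facts: Proposition \ref{multiconditional} with $k=n$ and $i=1$, the identity $\sum_{b\in B}bf^n_0b^*=f^n_1$ established in the proof of Proposition \ref{multistep}, and the observation that each $b\in M_1$ commutes with $JM_nJ=M_n'$. The paper, however, never verifies any conditional-expectation axioms: since $M_{2n}=M_nf^n_0M_n$, every element of $M_0'\cap B(L^2(M_n,\tr_n))=JM_{2n}J$ is a finite sum of terms $xf^n_0y$ with $x,y\in JM_nJ$, and on such a term the averaging map gives $d^{-2}\sum_{b\in B} b(xf^n_0y)b^*=d^{-2}xf^n_1y$, which is exactly the formula that the second (commutant) statement of Proposition \ref{multiconditional} gives for $E_{M_1'}$; matching these formulas on the spanning set is the entire proof. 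You instead characterize $E_{M_1'}$ abstractly as the unique trace-preserving expectation, check the axioms for $\Phi$ directly --- your Pimsner--Popa verification that $\Phi$ carries $M_0'$ into $M_1'$ is a nice self-contained computation that the paper gets for free --- and then prove trace-preservation by conjugating with $J$ into $M_{2n}$. Two remarks on that last step. First, the $J$-conjugation is avoidable: the second statement of Proposition \ref{multiconditional} is already phrased for the commutant tower $M_0'\supset M_1'$, so you can match formulas in $M_0'$ itself, which is what the paper does (your own phrase ``for the inclusion of commutants'' suggests you half-noticed this). Second, the fact $Je_1J=e_{2n-1}$ that you cite is true but neither proved nor needed; what your computation actually uses is $Jf^n_0J=f^n_0$ and $Jf^n_1J=f^n_1$, which are immediate because, as operators on $L^2(M_n,\tr_n)$, these are the projections onto the $J$-invariant subspaces $L^2(M_0,\tr_0)$ and $L^2(M_1,\tr_1)$. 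The trade-off: your route is longer but establishes that the averaging map is a conditional expectation without appealing to the spanning description of $M_0'$, while the paper's route buys brevity by leaning entirely on Propositions \ref{multistep} and \ref{multiconditional}.
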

\begin{proof}
The result follows from Proposition \ref{multiconditional}, since for $x,y\in JM_nJ\subset M_1'$,
$$
\sum\limits_{b\in B} bxf^n_0yb^*=\sum\limits_{b\in B} xbf^n_0b^*y=xf^n_1y.
$$
\end{proof}

Proposition \ref{existence} and a simple induction argument show the following:
\begin{prop}\label{renormalize}
For all $n\in\N$, there are isomorphisms of $M_1-M_1$ bimodules
\begin{align*}
\theta_n\colon \bigotimes\limits^n_{M_0} M_1&\longrightarrow M_n\hsp{by}\\
x_1\otimes\cdots \otimes x_n &\longmapsto x_1 v_1 x_2 v_2\cdots v_{n-1} x_n.
\end{align*}
\end{prop}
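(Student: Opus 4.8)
The plan is to induct on $n$, peeling off the last tensor factor. Each $\theta_n$ is an $M_1$-$M_1$ bimodule map by inspection, since the left- and right-most factors carry the two actions, so the whole content is bijectivity. The base case $n=1$ is the identity map, and $n=2$ is exactly Proposition \ref{existence} after rescaling $e_1$ to $v_1=de_1$. For the inductive step I would write $\bigotimes^{n+1}_{M_0}M_1=(\bigotimes^n_{M_0}M_1)\otimes_{M_0}M_1$ and note that $\theta_{n+1}(x_1\otimes\cdots\otimes x_{n+1})=\theta_n(x_1\otimes\cdots\otimes x_n)\,v_n\,x_{n+1}$, so that $\theta_{n+1}=\Phi_n\circ(\theta_n\otimes\id_{M_1})$ where $\Phi_n\colon M_n\otimes_{M_0}M_1\to M_{n+1}$ is $m\otimes x\mapsto mv_nx$. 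Since $\theta_n\otimes\id_{M_1}$ is an isomorphism by the induction hypothesis, it suffices to show that $\Phi_n$ is a bimodule isomorphism.

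Surjectivity of $\Phi_n$ is a short computation with the relations. Because $e_k$ commutes with $M_{k-1}$ and $M_{k+1}=M_ke_kM_k$, one can peel off one projection at a time: $M_{n+1}=M_ne_nM_n=M_ne_ne_{n-1}M_{n-1}=\cdots=M_ne_ne_{n-1}\cdots e_1M_1=M_nv_nM_1$, so $\Phi_n$ is onto.

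Injectivity is the crux. Writing $v_n=d\,e_nv_{n-1}$ and using that $v_{n-1}$ commutes with $M_0$ (for $e_1$ as the Jones projection of $M_0\subset M_1$, and for each $e_j$ with $2\le j\le n-1$ since $M_0\subseteq M_{j-1}$), the map factors as $\Phi_n=\mu\circ\alpha$, where $\alpha(m\otimes x)=m\otimes v_{n-1}x$ is well defined into $M_n\otimes_{M_{n-1}}M_n$ precisely because $[v_{n-1},M_0]=0$, and $\mu(a\otimes b)=d\,ae_nb$ is the Proposition \ref{existence} isomorphism $M_n\otimes_{M_{n-1}}M_n\cong M_{n+1}$ (the inclusion $M_{n-1}\subset M_n$ being strongly Markov). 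Thus it is enough to show $\alpha$ is injective. For this I would verify that $\alpha$ is isometric for the natural inner products built from the traces and conditional expectations: the Temperley--Lieb relations give $v_{n-1}^*\,w\,v_{n-1}=d^2E_{M_0}(w)e_1$ for $w\in M_{n-1}$ (the case $w=1$ reading $v_{n-1}^*v_{n-1}=d^2e_1$), and substituting this, together with the Markov property $\tr_n(\,\cdot\,e_1)=d^{-2}\tr_1(\,\cdot\,)$, into the inner product on $M_n\otimes_{M_{n-1}}M_n$ reproduces exactly the inner product on $M_n\otimes_{M_0}M_1$. Injectivity then reduces to faithfulness of the form on $M_n\otimes_{M_0}M_1$, which is where the existence of a Pimsner--Popa basis for $M_1$ over $M_0$ is used, exactly as in the kernel computation in the proof of Lemma \ref{basicconstruction}.

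I expect this last faithfulness step to be the main obstacle: bimodularity is automatic, surjectivity is structural, and the hard isomorphism is handed to us by Proposition \ref{existence}, whereas ruling out a kernel genuinely requires the finiteness packaged in the Pimsner--Popa basis. A variant that sidesteps the inner-product bookkeeping is to run the basis argument of Lemma \ref{basicconstruction} directly on $\Phi_n$: if $\sum_i m_iv_nx_i=0$, sandwich by $v_{n-1}^*(\,\cdot\,)v_{n-1}$, use $v_{n-1}^*wv_{n-1}=d^2E_{M_0}(w)e_1$, and resolve the identity $\sum_{b}b\otimes b^*$ of $M_1\otimes_{M_0}M_1$ to force $\sum_i m_i\otimes x_i=0$.
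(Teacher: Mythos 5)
Your overall architecture is sound and is essentially the induction the paper has in mind (Proposition \ref{existence} supplying the one-step isomorphism): bimodularity, well-definedness of $\alpha$ (since $v_{n-1}$ commutes with $M_0$), and surjectivity are all fine. The gap is that the key identity you invoke twice is false: for $w\in M_{n-1}$ one does \emph{not} have $v_{n-1}^*wv_{n-1}=d^2E_{M_0}(w)e_1$ as an operator identity. Already for $n-1=2$, take $w\in M_1$ with $E_{M_0}(w)=0$ but $w\neq 0$. Since $w$ commutes with $e_2$, we get $v_2^*wv_2=d^4e_1we_2e_1$, and applying this to the image $\widehat{1}$ of $1$ in $L^2(M_2,\tr_2)$ gives $d^4\,e_1wE_{M_1}(e_1)\widehat{1}=d^2\,\widehat{e_1w}\neq 0$ (indeed $\tr_2(w^*e_1w)=d^{-2}\tr_1(ww^*)>0$), whereas your formula predicts $0$. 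The failure is structural: in $v_{n-1}^*wv_{n-1}=d^{2(n-1)}e_1\cdots e_{n-1}we_{n-1}\cdots e_1$ the element $w$ is pinched between two copies of $e_{n-1}$, producing $E_{M_{n-2}}(w)$, but after that no projection sandwiches it again, so the expectation does not descend to $E_{M_0}(w)$ at the operator level; your identity is only valid for $w\in M_0$ (which is why the $w=1$ check passes). Both your main route and your ``variant'' rest on this identity, so as written neither argument is complete.

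Fortunately the repair is close at hand, because the direction of the sandwich is what matters. What is true for $w\in M_{n-1}$ is the weak form $E_{M_1}(v_{n-1}^*wv_{n-1})=E_{M_0}(w)$, obtained by cycling inside $\tr_n(x'^*v_{n-1}^*wv_{n-1}x)$ (with $x,x'\in M_1$) and repeatedly applying $e_kze_k=E_{M_{k-1}}(z)e_k$ together with the Markov property $\tr(ze_k)=d^{-2}\tr(z)$; this is exactly what your inner-product computation needs, since $v_{n-1}^*wv_{n-1}$ only ever appears under such a trace, so $\alpha$ is indeed isometric and your first argument survives with this substitution. Cleaner still, mirror the sandwich: for $z\in M_1$ the identity $v_nzv_n^*=d^2E_{M_0}(z)e_n$ \emph{is} a genuine operator identity, because $z$ is pinched between two copies of $e_1$ at the first step and $E_{M_0}(z)$ then commutes with all remaining $e_j$'s. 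With it, injectivity of $\Phi_n$ follows directly in the style of Lemma \ref{basicconstruction}, with no inner products and no separate faithfulness step: if $\sum_i m_iv_nx_i=0$ with $m_i\in M_n$, $x_i\in M_1$, then multiplying on the right by $yv_n^*$ for $y\in M_1$ gives
\[
0=\sum_i m_iv_n(x_iy)v_n^*=d^2\Bigl(\sum_i m_iE_{M_0}(x_iy)\Bigr)e_n,
\]
and since $m\mapsto me_n$ is injective on $M_n$ we get $\sum_i m_iE_{M_0}(x_iy)=0$ for all $y\in M_1$; taking $y=b$ over a Pimsner--Popa basis $B$ and writing $x_i=\sum_{b\in B}E_{M_0}(x_ib)b^*$ (Proposition \ref{ppbasis}) yields $\sum_i m_i\otimes x_i=\sum_{b\in B}\bigl(\sum_i m_iE_{M_0}(x_ib)\bigr)\otimes b^*=0$.
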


\begin{rem}
Recall that $L^2(M_n,\tr_n)$ is the completion of $M_n$ with inner product $\langle x,y\rangle = \tr_n(y^*x)$. As usual, $\theta_n$ gives an isomorphism of Hilbert-bimodules
$$
\bigotimes\limits^n_{M_0} L^2(M_1,\tr_1)\longrightarrow L^2(M_n,\tr_n)
$$
where the tensor product on the left is Connes' relative tensor product with inner product given inductively by
\begin{align*}
\langle x_1\otimes u, y_1\otimes v\rangle_n &=\langle E_{M_0}(y_1^* x_1) u,v\rangle_{n-1}\\
\langle u\otimes x_n, v\otimes y_n\rangle_n &=\langle u,vE_{M_0}(y_n x_n^*)\rangle_{n-1}.
\end{align*}
\end{rem}

\begin{defn} Given $x\in M_1$, we get
\item[(1)] left and right multiplication operators
$$
L(x),R(x)\colon \bigotimes\limits^{n}_{M_0} L^2(M_1,\tr_1) \longrightarrow \bigotimes\limits^{n}_{M_0} L^2(M_1,\tr_1)
$$
by $L(x)(v)=xv$ and $R(x)(v)=vx$, and
\item[(2)] left and right creation operators 
$$
L_x,R_x\colon \bigotimes\limits^n_{M_0} L^2(M_1,\tr_1) \longrightarrow \bigotimes\limits^{n+1}_{M_0} L^2(M_1,\tr_1)
$$
by $L_x(v)=x\otimes v$ and $R_x(v)=v\otimes x$. 
\end{defn}

\begin{fact}
For $x\in M_1$, we have
\begin{align*}
L_x^*(y_1\otimes\cdots \otimes y_{n+1}) &= E_{M_0}(x^* y_1) y_2\otimes\cdots \otimes y_{n+1}\hsp{and}\\
R_x^*(y_1\otimes\cdots \otimes y_{n+1}) &=y_1\otimes \cdots\otimes y_{n} E_{M_0}(y_{n+1} x^*).
\end{align*}
\end{fact}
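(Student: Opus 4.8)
The plan is to verify both identities directly from the definition of the adjoint, using the two inductive inner-product formulas recorded in the Remark preceding this Fact. The operators $L_x$ and $R_x$ carry the algebraic relative tensor product $\bigotimes_{M_0}^n L^2(M_1,\tr_1)$ into $\bigotimes_{M_0}^{n+1} L^2(M_1,\tr_1)$, and since elementary tensors span a dense subspace, the adjoint is determined by its values on them. Thus it suffices to show that for all $v \in \bigotimes_{M_0}^n L^2(M_1,\tr_1)$ and all $y_1,\dots,y_{n+1} \in M_1$,
$$
\langle L_x v, y_1\otimes\cdots\otimes y_{n+1}\rangle_{n+1} = \langle v, E_{M_0}(x^* y_1)\, y_2\otimes\cdots\otimes y_{n+1}\rangle_n,
$$
and the corresponding identity for $R_x$.

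First I would treat $L_x^*$. Writing $w' = y_2\otimes\cdots\otimes y_{n+1}$ and using $L_x v = x\otimes v$, the first inner-product formula, which peels off the leftmost tensor factor, gives
$$
\langle x\otimes v, y_1\otimes w'\rangle_{n+1} = \langle E_{M_0}(y_1^* x)\, v,\, w'\rangle_n.
$$
Now $E_{M_0}(y_1^* x)\in M_0$ acts on the left, and since the adjoint of left multiplication by $a\in M_0$ is left multiplication by $a^*$, together with the fact that $E_{M_0}$ is $*$-preserving so that $E_{M_0}(y_1^* x)^* = E_{M_0}(x^* y_1)$, I can move this element across the inner product to obtain
$$
\langle E_{M_0}(y_1^* x)\, v,\, w'\rangle_n = \langle v,\, E_{M_0}(x^* y_1)\, w'\rangle_n.
$$
Finally, because the leftmost factor of the tensor product is balanced over $M_0$, left multiplication by $E_{M_0}(x^* y_1)$ is absorbed into $y_2$, giving $E_{M_0}(x^* y_1)\, w' = E_{M_0}(x^* y_1)\, y_2 \otimes y_3\otimes\cdots\otimes y_{n+1}$, which is exactly the asserted value of $L_x^*(y_1\otimes\cdots\otimes y_{n+1})$.

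The computation for $R_x^*$ is entirely symmetric: with $w'' = y_1\otimes\cdots\otimes y_n$, applying the second inner-product formula, which peels off the rightmost factor, to $R_x v = v\otimes x$ yields $\langle v\otimes x, w''\otimes y_{n+1}\rangle_{n+1} = \langle v,\, w''\, E_{M_0}(y_{n+1} x^*)\rangle_n$, and absorbing the $M_0$-element $E_{M_0}(y_{n+1} x^*)$ into the rightmost balanced factor $y_n$ gives the claimed formula. I do not anticipate a genuine obstacle here; the only points requiring care are the Hilbert-space conventions (linearity in the first variable, so that the adjoint of left/right multiplication by $a\in M_0$ is multiplication by $a^*$) and the self-adjointness of $E_{M_0}$, both of which are standard. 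Passing from elementary tensors to arbitrary vectors is then immediate by linearity and density.
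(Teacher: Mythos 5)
Your proof is correct; the paper states this as a Fact with no proof at all, and your direct verification from the two inductive inner-product formulas is exactly the routine argument the authors intended to be supplied by the reader. The only auxiliary point you invoke---that the adjoint of the left action of $a\in M_0$ is the action of $a^*$, via $E_{M_0}(y_1^*x)^*=E_{M_0}(x^*y_1)$---is standard and can also be checked directly from the same recursive formulas, so there is no gap.
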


\begin{lem}\label{invariant}
If $A$ is a $\C$-algebra, $V_1$ is a right $A$-module, $V_2$ is an $A-A$ bimodule, and $V_3$ is a left $A$-module, then for each $A$-invariant $v_2\in V_2$, the map
$$
v_1\otimes v_3 \longmapsto v_1\otimes v_2\otimes v_3
$$
defines a linear map $\phi_{v_2}\colon V_1\otimes_A V_3 \to V_1\otimes_A V_2\otimes_A V_3$. Moreover, the map $v\mapsto\phi_v$ on $A'\cap V_2=\set{v\in V_2}{av=va\hsp{for all}a\in A}$ is $\C$-linear.
\end{lem}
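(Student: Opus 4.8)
The plan is to invoke the universal property of the balanced tensor product $V_1\otimes_A V_3$, which is the quotient of $V_1\otimes_{\C} V_3$ by the subspace generated by all elements $v_1a\otimes v_3 - v_1\otimes av_3$. To produce the $\C$-linear map $\phi_{v_2}$, it suffices to exhibit a $\C$-bilinear map $\beta_{v_2}\colon V_1\times V_3\to V_1\otimes_A V_2\otimes_A V_3$ that is $A$-balanced, i.e.\ $\beta_{v_2}(v_1a,v_3)=\beta_{v_2}(v_1,av_3)$ for all $a\in A$; any such map factors uniquely through $V_1\otimes_A V_3$. First I would set $\beta_{v_2}(v_1,v_3)=v_1\otimes v_2\otimes v_3$. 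This is visibly $\C$-bilinear, since the triple tensor product is $\C$-multilinear in each slot, so the only thing left to verify is the balancing condition.

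The balancing condition is exactly where the invariance hypothesis enters, and it is the one point of the argument that is not purely formal. Computing inside $V_1\otimes_A V_2\otimes_A V_3$ and using the defining relations of the two tensor products, I have $\beta_{v_2}(v_1a,v_3)=(v_1a)\otimes v_2\otimes v_3=v_1\otimes(av_2)\otimes v_3$, while $\beta_{v_2}(v_1,av_3)=v_1\otimes v_2\otimes(av_3)=v_1\otimes(v_2a)\otimes v_3$. These two expressions agree if and only if the middle slots agree after moving $a$ across, which is guaranteed precisely by $av_2=v_2a$, that is, by $v_2\in A'\cap V_2$. Hence $\beta_{v_2}$ is $A$-balanced and descends to the desired $\C$-linear map $\phi_{v_2}$ with $\phi_{v_2}(v_1\otimes v_3)=v_1\otimes v_2\otimes v_3$.

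For the final assertion, I would first note that $A'\cap V_2$ is a $\C$-subspace of $V_2$, since $a(v+w)=(v+w)a$ and $a(\lambda v)=(\lambda v)a$ follow at once from $\C$-linearity of the two module actions. Then $\C$-linearity of $v\mapsto\phi_v$ is inherited from additivity and $\C$-homogeneity of the tensor product in its middle factor: on generators, $\phi_{v+w}(v_1\otimes v_3)=v_1\otimes(v+w)\otimes v_3=v_1\otimes v\otimes v_3+v_1\otimes w\otimes v_3=(\phi_v+\phi_w)(v_1\otimes v_3)$, and likewise $\phi_{\lambda v}=\lambda\phi_v$; since the simple tensors span $V_1\otimes_A V_3$, these identities hold everywhere.

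I expect no serious obstacle here: the entire content is the single observation that $A$-balancedness of the insertion map is equivalent to $A$-centrality of $v_2$, so the invariance hypothesis is not merely convenient but is exactly the well-definedness condition. The remaining verifications are formal consequences of the universal property and the multilinearity of $\otimes$.
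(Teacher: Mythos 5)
Your proposal is correct and is essentially the paper's own argument, spelled out in full: the paper's entire proof is the single sentence ``Middle $A$-linearity is satisfied as $v_2$ is $A$-invariant,'' which is exactly your observation that $A$-balancedness of the insertion map $(v_1,v_3)\mapsto v_1\otimes v_2\otimes v_3$ follows from $av_2=v_2a$. Your additional verification of the $\C$-linearity of $v\mapsto\phi_v$ is the routine part the paper leaves implicit.
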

\begin{proof}
Middle $A$-linearity is satisfied as $v_2$ is $A$-invariant.
\end{proof}

\begin{rem}
This lemma gives an alternate proof that the map $E_{M_1'}$ is well defined in Proposition \ref{sumb}. By Remark \ref{basisproperties}, $d^{-2}\sum_{b\in B} b\otimes b^*$ is independent of the choice of Pimsner-Popa basis $B$, so the composite map
$$
x\longmapsto\phi_x\longmapsto \phi_x\left(d^{-2}\sum\limits_{b\in B} b\otimes b^*\right)=d^{-2}\sum_{b\in B} b\otimes x\otimes b^*\longmapsto d^{-2}\sum_{b\in B} bx b^*
$$
on $M_0'\cap B(L^2(M_n,\tr_n))$ is independent of the choice. Moreover, the result is $M_1$-invariant, since for any unitary $u\in M_1$, $\set{ub}{b\in B}$ is another Pimsner-Popa basis for $M_1$ over $M_0$.
\end{rem}

\subsection{Definition of the planar algebra}\label{padefn}
We refer the reader to \cite{math/9909027}, \cite{MR1865703}, \cite{0902.1294}, or \cite{1007.1158v2} for the definition of a planar $*$-algebra. To define our planar algebra, we need the following:

In \cite{math/9909027}, it was shown how to endow the tower of relative commutants of an extremal, finite index $II_1$-subfactor with the structure of a spherical subfactor planar algebra. In fact, the only essential ingredient is a Pimsner-Popa basis, so the same construction gives a planar $*$-algebra structure on the vector spaces $P_{n,\pm}$ ($n\geq 0$) given by $P_{n,+}=\theta_{n}^{-1}(M_0'\cap M_n)$ and $P_{n,-}=\theta_n^{-1}(M_1'\cap M_{n+1})$, except that we now allow $*$'s in shaded regions, and we no longer require extremality (the resulting planar algebra need not be spherical).

Suppose a planar $(k,\pm)$-tangle $T$ is arranged so that 
\be
\item[(1)] all the input and output disks are horizontal rectangles with all strings (that are not closed loops) emanating from the top edges of the rectangles,
\item[(2)] all the input disks are in disjoint horizontal bands and all maxima and minima of strings are at different vertical levels, and not in the horizonal bands defined by the input disks.
\item[(3)] the distinguished (starred) intervals of all the disks are at the left edges of the rectangles. (In the sequel, we will assume this convention and omit the $*$'s.)
\ee
Suppose also that $T$ has $s$ input rectangles, and input rectangle $j$ has $2r_i$ strings emanating from the top. We define the action of $T$ on an $s$-tuple $u=(u_1, \cdots, u_s)$ where $u_j\in P_{r_j,\pm_j}$ and $\pm_j=\pm$ if the region just below input rectangle $j$ is unshaded or shaded respectively. 

We read the action of $T$ on $u$ by sliding a horizontal line through the tangle from bottom to top. For a fixed vertical $y$-value, off the input disks' horizontal bands and away from the relative extrema of the strings, the horizontal line will meet $n_y$ shaded regions from left to right. One should think of the shaded regions along this line as elements of $M_1$ and the unshaded regions as the symbols $\otimes_{M_0}$. Near the top, the line will meet $k$ or $k+1$ shaded regions depending on whether the left-most region of $T$ is unshaded or shaded respectively. We illustrate a typical $(3,+)$-tangle with the horizontal line about half way through its travel:
$$\begin{tikzpicture}[rectangular]
	\clip (6,6) --(-6,6) -- (-6,-6) -- (6,-6);
	\filldraw[shaded] (-3.5,6.5)--(-3.5,4)--(-3,4)--(-3,5) arc (180:0:.5cm)--(-2,4)-- (-1.5, 4)--(-1.5,4.5) arc (180:0:.5cm) 
	--(-.5,-3.5)--(-2,-3.5)--(-2,-3) arc (0:180:1cm) --(-4,-3) 
.. controls ++(270:5cm) and ++(270:5cm) .. 
	(5.5,0)
	-- (5.5,2) arc (0:180:1cm)--(3.5,2) --( 3,2)--(3,6.5);
	\filldraw[unshaded] (1, 6.5)--(1,-.5)--(3,-.5)--(3,0) .. controls ++(90:1cm) and ++(270:1cm) ..  (5,1.3)--(5,2) arc (0:180:.5cm) -- (4,1.5) -- (2,1.5)--(2,6.5);
	\filldraw[unshaded] (-2.5, -3.5)--(-2.5,-3) arc(0:180:.5cm) 
	.. controls ++(270:3cm) and ++(270:3cm) .. 
	(2,-2.5) arc (0:180:1cm)  -- (0,-3.5);	
	\filldraw[shaded] (-5.25,6.5) 
	.. controls ++(270:7cm) and ++(90:7cm) .. 
	(-1.5,-3.5)--(-1,-3.5) 
	.. controls ++(90:7cm) and ++(270:7cm) .. 
	(-4.75,6.5);
	\draw [ultra thick] (6,6) --(-6,6) -- (-6,-6) -- (6,-6)--(6,6);
	\draw[thick, unshaded] (-4,4.5)--(-4,3.5) --(-1,3.5) -- (-1,4.5)--(-4,4.5);
	\draw[thick, unshaded] (-3, -3)--(1, -3) --(1,-4) -- (-3,-4)--(-3,-3);
	\draw[thick, unshaded] (.5,0)--(3.5,0) --(3.5,-1) -- (.5,-1)--(.5,0);
	\draw[thick, unshaded] (1.5,2)--(4.5,2) --(4.5,1) -- (1.5,1)--(1.5,2);
	\draw[dashed] (-6,.25)-- (6,.25); 
\end{tikzpicture}$$
For each $y$ coordinate of the horizontal line, one reads off an $M_i$-invariant element $\eta_y\in\bigotimes^{n_y}_{M_0} M_1$, where $i=0$ if $T$ is a $(k,+)$-tangle and $i=1$ if $T$ is a $(k,-)$-tangle. 

The element  $\eta_y$ begins as $1\in M_i$ near the bottom, and it remains constant as long as the horizontal line meets neither maxima, minima, nor rectangles. If the horizontal line passes input rectangle $j$ for which exactly $t$ shaded regions sit to the left, then we insert $u_j$ into $\eta_y$ by applying Lemma \ref{invariant} with $v_2=u_j$, 
$$V_1=\bigotimes^t_{M_0} M_1,\hs V_2=P_{r_j,\pm_j},\hsp{and} V_1=\bigotimes^{n_y-t}_{M_0} M_1.$$
Note that $V_1,V_3$ are considered as $M_l$-modules and $P_{r_j,\pm_j}$ is an $M_l-M_l$ bimodule, where $l=0$ if $\pm_j=+$ and $l=1$ if $\pm_j=-$. Note that inserting $u_j$ into $\eta_y$ gives an $M_i$-invariant vector.

As the horizontal line passes a maximum or minimum, $\eta_y$ changes according to Figure \ref{readtangle} where the changes indicated on the tensors are to be inserted into the position indicated by the shaded regions on the horizontal (dashed) line.
\begin{figure}[!ht]
\begin{align*}
\begin{tikzpicture}[rectangular]
	\clip (1,1) --(1,-1) -- (-1,-1) -- (-1,1);
	\draw[shaded] (.75,1.5)--(.75,-1.5)--(1.25,-1.5)--(1.25,1.5);
	\draw[shaded] (-.75,1.5)--(-.75,-1.5)--(-1.25,-1.5)--(-1.25,1.5);
	\draw[shaded] (-.5,1.5)--(-.5,.25) arc (180:360:.5cm)--(.5,1.5);	
	\draw[ultra thick] (1,1) --(1,-1) -- (-1,-1) -- (-1,1)--(1,1);	
	\draw[dashed] (-1,-.5)-- (1,-.5); 
\end{tikzpicture}\rightarrow
\begin{tikzpicture}[rectangular]
	\clip (1,1) --(1,-1) -- (-1,-1) -- (-1,1);
	\draw[shaded] (.75,1.5)--(.75,-1.5)--(1.25,-1.5)--(1.25,1.5);
	\draw[shaded] (-.75,1.5)--(-.75,-1.5)--(-1.25,-1.5)--(-1.25,1.5);
	\draw[shaded] (-.5,1.5)--(-.5,.25) arc (180:360:.5cm)--(.5,1.5);	
	\draw[ultra thick] (1,1) --(1,-1) -- (-1,-1) -- (-1,1)--(1,1);	
	\draw[dashed] (-1,.5)-- (1,.5); 
\end{tikzpicture}&&x\otimes y&\longmapsto x\otimes 1\otimes y\\
\begin{tikzpicture}[rectangular]
	\clip (1,1) --(1,-1) -- (-1,-1) -- (-1,1);
	\draw[shaded] (2,2) --(2,-2) -- (-2,-2) -- (-2,2);
	\draw[unshaded] (.75,1.5)--(.75,-1.5)--(1.25,-1.5)--(1.25,1.5);
	\draw[unshaded] (-.75,1.5)--(-.75,-1.5)--(-1.25,-1.5)--(-1.25,1.5);
	\draw[unshaded] (-.5,1.5)--(-.5,.25) arc (180:360:.5cm)--(.5,1.5);	
	\draw[ultra thick] (1,1) --(1,-1) -- (-1,-1) -- (-1,1)--(1,1);	
	\draw[dashed] (-1,-.5)-- (1,-.5); 
\end{tikzpicture}\rightarrow
\begin{tikzpicture}[rectangular]
	\clip (1,1) --(1,-1) -- (-1,-1) -- (-1,1);
	\draw[shaded] (2,2) --(2,-2) -- (-2,-2) -- (-2,2);
	\draw[unshaded] (.75,1.5)--(.75,-1.5)--(1.25,-1.5)--(1.25,1.5);
	\draw[unshaded] (-.75,1.5)--(-.75,-1.5)--(-1.25,-1.5)--(-1.25,1.5);
	\draw[unshaded] (-.5,1.5)--(-.5,.25) arc (180:360:.5cm)--(.5,1.5);	
	\draw[ultra thick] (1,1) --(1,-1) -- (-1,-1) -- (-1,1)--(1,1);	
	\draw[dashed] (-1,.5)-- (1,.5); 
\end{tikzpicture}&&x&\longmapsto d^{-1}\sum\limits_{b\in B} xb\otimes b^*=d^{-1}\sum\limits_{b\in B} b\otimes b^*x\\
\begin{tikzpicture}[rectangular]
	\clip (1,1) --(1,-1) -- (-1,-1) -- (-1,1);
	\draw[shaded] (.75,1.5)--(.75,-1.5)--(1.25,-1.5)--(1.25,1.5);
	\draw[shaded] (-.75,1.5)--(-.75,-1.5)--(-1.25,-1.5)--(-1.25,1.5);
	\draw[shaded] (-.5,-1.5)--(-.5,-.25) arc (180:0:.5cm)--(.5,-1.5);	
	\draw[ultra thick] (1,1) --(1,-1) -- (-1,-1) -- (-1,1)--(1,1);	
	\draw[dashed] (-1,-.5)-- (1,-.5); 
\end{tikzpicture}\rightarrow
\begin{tikzpicture}[rectangular]
	\clip (1,1) --(1,-1) -- (-1,-1) -- (-1,1);
	\draw[shaded] (.75,1.5)--(.75,-1.5)--(1.25,-1.5)--(1.25,1.5);
	\draw[shaded] (-.75,1.5)--(-.75,-1.5)--(-1.25,-1.5)--(-1.25,1.5);
	\draw[shaded] (-.5,-1.5)--(-.5,-.25) arc (180:0:.5cm)--(.5,-1.5);	
	\draw[ultra thick] (1,1) --(1,-1) -- (-1,-1) -- (-1,1)--(1,1);	
	\draw[dashed] (-1,.5)-- (1,.5); 
\end{tikzpicture}&&x\otimes y\otimes z&\longmapsto dxE_{M_0}(y)\otimes z=dx\otimes E_{M_0}(y)z\\
\begin{tikzpicture}[rectangular]
	\clip (1,1) --(1,-1) -- (-1,-1) -- (-1,1);
	\draw[shaded] (2,2) --(2,-2) -- (-2,-2) -- (-2,2);
	\draw[unshaded] (.75,1.5)--(.75,-1.5)--(1.25,-1.5)--(1.25,1.5);
	\draw[unshaded] (-.75,1.5)--(-.75,-1.5)--(-1.25,-1.5)--(-1.25,1.5);
	\draw[unshaded] (-.5,-1.5)--(-.5,-.25) arc (180:0:.5cm)--(.5,-1.5);	
	\draw[ultra thick] (1,1) --(1,-1) -- (-1,-1) -- (-1,1)--(1,1);	
	\draw[dashed] (-1,-.5)-- (1,-.5); 
\end{tikzpicture}\rightarrow
\begin{tikzpicture}[rectangular]
	\clip (1,1) --(1,-1) -- (-1,-1) -- (-1,1);
	\draw[shaded] (2,2) --(2,-2) -- (-2,-2) -- (-2,2);
	\draw[unshaded] (.75,1.5)--(.75,-1.5)--(1.25,-1.5)--(1.25,1.5);
	\draw[unshaded] (-.75,1.5)--(-.75,-1.5)--(-1.25,-1.5)--(-1.25,1.5);
	\draw[unshaded] (-.5,-1.5)--(-.5,-.25) arc (180:0:.5cm)--(.5,-1.5);	
	\draw[ultra thick] (1,1) --(1,-1) -- (-1,-1) -- (-1,1)--(1,1);	
	\draw[dashed] (-1,.5)-- (1,.5); 
\end{tikzpicture}&&x\otimes y&\longmapsto xy.
\end{align*}
\caption{Reading planar tangles in standard form}\label{readtangle}
\end{figure}
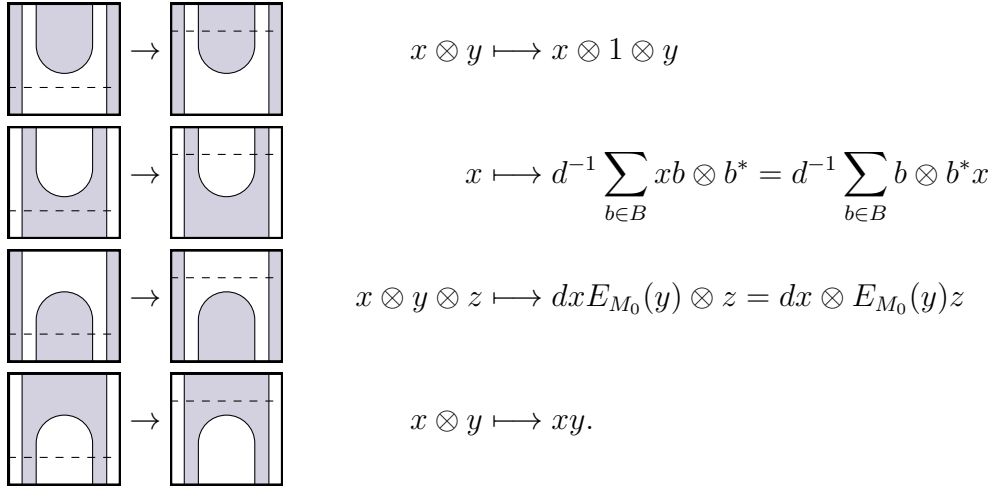
With the exception of one case, each of these maps is an $M_1-M_1$ bimodule map, so it will preserve $M_i$-invariant elements. The remaining case to consider is when the left-most or right-most shaded region is capped off by applying the third map pictured above, which is an $M_0-M_0$ bimodule map. But this will only occur when the distinguished (starred) interval of the external disk meets an unshaded region, so $i$ would have to be $0$ from the beginning.

The action of the tangle on $u$ is the element $\eta_y\in P_{k,\pm}$ read for horizontal lines sufficiently close to the top. The $*$-structure is the same from that of \cite{math/9909027}.

\subsection{Burns' treatment of the rotation operator on $P_{n,+}$}
The key to showing that the $P_{n,\pm}$'s define a planar algebra is the isotopy invariance which comes from the existence of the rotation on $P_{n,\pm}$. A particularly elegant treatment of this is due to Michael Burns, but it only appears in his thesis \cite{burns}, so we include a proof below for the convenience of the reader.

\begin{defn}\label{rotationdef}
Let $B$ be a Pimsner-Popa basis of $M_1$ over $M_0$. For 
$$x=x_1\otimes\cdots \otimes x_n \in \bigotimes^n_{M_0} M_1,$$
define $\D\rho(x) = \sum\limits_{b\in B} L_b R_b^*(x)= \sum\limits_{b\in B} b\otimes x_1\otimes \cdots\otimes x_{n-1} E_{M_0}(x_n b^*)$.
\end{defn}

\begin{prop}\label{rotationwelldefined}
The map $\rho$ preserves $P_{n,+}$, and its restriction to $P_{n,+}$ is independent of the choice of $B$.
\end{prop}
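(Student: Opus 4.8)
The plan is to work entirely in the tensor picture, where $P_{n,+}=\theta_n^{-1}(M_0'\cap M_n)$ is identified, via the $M_1$--$M_1$ bimodule isomorphism $\theta_n$ of Proposition \ref{renormalize}, with the space of $M_0$-central tensors: since $\theta_n$ intertwines the left (resp.\ right) $M_0$-action on $M_n$ with left multiplication on the first (resp.\ right multiplication on the last) tensor factor, an element $x=x_1\otimes\cdots\otimes x_n$ lies in $P_{n,+}$ if and only if $zx_1\otimes x_2\otimes\cdots\otimes x_n=x_1\otimes\cdots\otimes x_{n-1}\otimes x_nz$ for every $z\in M_0$. There are thus two things to establish: that $\rho(x)$ is again $M_0$-central, and that $\rho(x)$ is independent of $B$. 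Both will come out of a single maneuver. The workhorses are the resolution of the identity $\sum_{b\in B}L_bL_b^*=\id$ on $\bigotimes^n_{M_0}M_1$, which is immediate from Proposition \ref{ppbasis}(ii) and the formula for $L_b^*$, together with the reproducing identity $\sum_{b}E_{M_0}(yb)b^*=y$ of Proposition \ref{ppbasis}(iii).

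For basis independence, let $B'$ be a second Pimsner--Popa basis. The idea is to insert $\id=\sum_{b'\in B'}L_{b'}L_{b'}^*$ in front of $\rho_B=\sum_{b\in B}L_bR_b^*$ and use $L_{b'}^*L_b=L\!\left(E_{M_0}(b'^*b)\right)$, i.e.\ left multiplication by the element $E_{M_0}(b'^*b)\in M_0$, to write
\[
\rho_B(x)=\sum_{b',b}b'\otimes E_{M_0}(b'^*b)\bigl(x_1\otimes\cdots\otimes x_{n-1}E_{M_0}(x_nb^*)\bigr).
\]
Now comes the one genuinely nontrivial step: because $E_{M_0}(b'^*b)\in M_0$ acts on the first factor, the $M_0$-centrality of $x$ lets me slide it around to the last factor, replacing $E_{M_0}(x_nb^*)$ by $E_{M_0}\bigl(x_nE_{M_0}(b'^*b)b^*\bigr)$. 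Summing over $b\in B$ and using $\sum_b E_{M_0}(b'^*b)b^*=b'^*$ (Proposition \ref{ppbasis}(iii) with $y=b'^*$) collapses the inner sum to $E_{M_0}(x_nb'^*)$, leaving exactly $\rho_{B'}(x)$; hence $\rho_B=\rho_{B'}$ on $P_{n,+}$.

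The same maneuver handles invariance of $P_{n,+}$: to see $z\rho(x)=\rho(x)z$ for $z\in M_0$, I would expand the prepended $zb$ as $\sum_{b'}b'E_{M_0}(b'^*zb)$ via Proposition \ref{ppbasis}(ii), slide the $M_0$-element $E_{M_0}(b'^*zb)$ to the last factor using centrality, and resum over $b$ using $\sum_b E_{M_0}(b'^*zb)b^*=b'^*z$; the result is visibly $\rho(x)z$. The main obstacle, and really the crux of the whole proposition, is precisely the sliding step: it is legitimate only because $x$ is $M_0$-central, which is exactly why $\rho$ is well defined and basis independent on $P_{n,+}$ yet not on all of $\bigotimes^n_{M_0}M_1$. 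Everything else is bookkeeping with the two Pimsner--Popa identities, and the argument never appeals to a choice of basis in its final form.
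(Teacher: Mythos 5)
Your proof is correct, and it takes a genuinely different (more computational) route than the paper's. The paper argues structurally: for $M_0$-central $x$, the element $\sum_{b\in B}b\otimes x\otimes b^*\in\bigotimes^{n+2}_{M_0}M_1$ is independent of $B$, because $\sum_{b\in B}b\otimes b^*$ is the identity of the algebra $M_1\otimes_{M_0}M_1$ (Remark \ref{basisproperties}) and insertion of a central element into the middle is well defined and linear (Lemma \ref{invariant} --- this is where centrality enters for the paper); one then recovers $\rho(x)$ by applying the basis-free, $M_0$--$M_0$ bilinear map collapsing the last two legs, $u\otimes v\mapsto uE_{M_0}(v)$, so basis-independence and $M_0$-centrality of $\rho(x)$ drop out simultaneously. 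You instead compare $\rho_B$ and $\rho_{B'}$ head-on: insert the resolution of the identity $\sum_{b'\in B'}L_{b'}L_{b'}^*$, rewrite $L_{b'}^*L_b$ as left multiplication by $E_{M_0}(b'^*b)\in M_0$, slide that element to the last leg via $zR_b^*(x)=R_b^*(zx)=R_b^*(xz)$ (this is where centrality enters for you, and your justification is sound), and resum with the reproducing identity of Proposition \ref{ppbasis}(iii); preservation of centrality is then a second computation of the same shape using Proposition \ref{ppbasis}(ii). Both proofs pivot on exactly the same mechanism --- centrality of $x$ lets an $M_0$-element migrate from the new first leg to the last --- so your sliding step is essentially a hands-on unfolding of what the paper packages into Lemma \ref{invariant}. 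What the paper's factorization buys is brevity and reusability: the same three-line pattern is applied verbatim to the rotation $\sigma$ on $P_{n,-}$. What your computation buys is self-containedness and transparency: it needs only Proposition \ref{ppbasis} and the adjoint formulas, never Lemma \ref{invariant} or Remark \ref{basisproperties}, and it makes explicit both where centrality is indispensable and why $\rho$ fails to be basis-independent off $P_{n,+}$.
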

\begin{proof}
Middle  linearity is respected by $\rho$, so it is well defined, though it will depend on $B$. By Lemma \ref{invariant} and Remark \ref{basisproperties}, for $M_0$-invariant $x$, the sum 
$$
\sum\limits_{b\in B} b\otimes x\otimes b^*
$$
is independent of $B$. We obtain $\rho$ by applying an $M_0-M_0$ bilinear map which does not involve $B$, so the restriction of $\rho$ is $M_0$-invariant and independent of $B$.
\end{proof}

\begin{thm}[Burns]\label{rotation}
For $x\in P_{n,+}$,
$$
\langle \rho(x), y_1\otimes\cdots \otimes y_n\rangle = \langle x, y_2\otimes \cdots \otimes y_{n}\otimes y_1\rangle.
$$
\end{thm}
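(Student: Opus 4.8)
The plan is to reduce both sides to an inner product of two vectors in $\bigotimes^{n-1}_{M_0}M_1$ and to match them. The right-hand side is immediate: since $y_2\otimes\cdots\otimes y_n\otimes y_1=R_{y_1}(y_2\otimes\cdots\otimes y_n)$, moving the creation operator across the inner product and using the stated formula for $R_x^*$ gives
$$
\langle x,\, y_2\otimes\cdots\otimes y_n\otimes y_1\rangle=\langle R_{y_1}^*x,\, y_2\otimes\cdots\otimes y_n\rangle=\big\langle x_1\otimes\cdots\otimes x_{n-1}E_{M_0}(x_ny_1^*),\, y_2\otimes\cdots\otimes y_n\big\rangle.
$$
So it suffices to show the left-hand side equals this same pairing.

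For the left-hand side I would expand $\rho(x)=\sum_{b\in B}L_bR_b^*(x)$ and use that $L_b^*$ is adjoint to $L_b$, together with the stated formula for $L_x^*$, to get
$$
\langle\rho(x),\, y_1\otimes\cdots\otimes y_n\rangle=\sum_{b\in B}\big\langle R_b^*x,\, E_{M_0}(b^*y_1)\cdot(y_2\otimes\cdots\otimes y_n)\big\rangle.
$$
The factor $E_{M_0}(b^*y_1)\in M_0$ acts on the left and is adjointable with adjoint the left action of $E_{M_0}(y_1^*b)$, so I can move it into the first slot; moreover this left action commutes with $R_b^*$ (which only alters the last leg), so each summand becomes $\langle R_b^*(E_{M_0}(y_1^*b)\cdot x),\, y_2\otimes\cdots\otimes y_n\rangle$. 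Here is where the hypothesis $x\in P_{n,+}$ enters: because $\theta_n$ is an $M_0-M_0$ bimodule map and $\theta_n(x)\in M_0'\cap M_n$, the vector $x$ is $M_0$-central, i.e. $z\cdot x=x\cdot z$ for all $z\in M_0$. Taking $z=E_{M_0}(y_1^*b)$ turns the left action into a right action on the last leg, so that
$$
R_b^*\big(E_{M_0}(y_1^*b)\cdot x\big)=x_1\otimes\cdots\otimes x_{n-1}E_{M_0}\big(x_nE_{M_0}(y_1^*b)b^*\big).
$$
Summing over $b$, pulling the sum inside the final $E_{M_0}$, and applying $\sum_{b\in B}E_{M_0}(y_1^*b)b^*=y_1^*$ (condition (iii) of Proposition \ref{ppbasis}) collapses the expression to $x_1\otimes\cdots\otimes x_{n-1}E_{M_0}(x_ny_1^*)$, which is exactly the vector appearing on the right-hand side above. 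This proves the identity.

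The routine parts are the two adjoint computations and the sesquilinear bookkeeping, in particular getting $E_{M_0}(b^*y_1)^*=E_{M_0}(y_1^*b)$ right, since a slip there would silently interchange $y_1$ and $y_1^*$. The one genuinely load-bearing step, and the thing to emphasize, is that $M_0$-centrality of $x$ is precisely what links the two occurrences of the basis element $b$ (one created on the left by $L_b$, the other contracted on the right by $R_b^*$), allowing the doubled $b$ to disappear through the Pimsner-Popa resolution of the identity. I would also record the degenerate case $n=1$ separately (there the prefix $x_1\otimes\cdots\otimes x_{n-1}$ is empty and the pairing lives in $\bigotimes^0_{M_0}M_1=M_0$), though the same manipulation goes through verbatim using the $M_0$-bimodularity of $E_{M_0}$.
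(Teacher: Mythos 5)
Your proof is correct and is essentially the paper's own argument: both rest on moving creation-operator adjoints across the inner product, invoking $M_0$-centrality of $x$ to link the two occurrences of the basis element $b$, and collapsing the sum via the Pimsner--Popa identity. The only cosmetic differences are that you keep $R_b^*$ acting on $x$ and reduce both sides to a common pairing in $\bigotimes^{n-1}_{M_0}L^2(M_1,\tr_1)$, using condition (iii) of Proposition \ref{ppbasis} where the paper transforms the left side directly into the right side using condition (ii).
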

\begin{proof}
As $\D \rho(x)=\sum\limits_{b\in B} L_b R_b^*(x)$, we have
{\fontsize{10}{10}{
\begin{align*}
\langle \rho(x), y_1\otimes\cdots \otimes y_n\rangle &=\sum\limits_{b\in B}\langle  L_b R_b^*  x, y_1\otimes\cdots \otimes y_n\rangle
=\sum\limits_{b\in B}\langle  x,R_b L_b^* y_1\otimes\cdots \otimes y_n\rangle\\
&=\sum\limits_{b\in B}\langle  x, E_{M_0}(b^*y_1)y _2\otimes\cdots \otimes y_n\otimes b\rangle
=\sum\limits_{b\in B}\langle  E_{M_0}(b^*y_1)^*x, y _2\otimes\cdots \otimes y_n\otimes b\rangle\\
&=\sum\limits_{b\in B}\langle  xE_{M_0}(b^*y_1)^*, y _2\otimes\cdots \otimes y_n\otimes b\rangle
=\sum\limits_{b\in B}\langle  x, y _2\otimes\cdots \otimes y_n\otimes bE_{M_0}(b^*y_1)\rangle\\
&= \langle x, y_2\otimes \cdots \otimes y_{n}\otimes y_1\rangle.
\end{align*}}}
\end{proof}

\begin{cor}
$\rho^n=\id$ on $P_{n,+}$.
\end{cor}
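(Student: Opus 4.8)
The plan is to bootstrap Burns' formula (Theorem \ref{rotation}) into a statement about every iterate $\rho^k$ by induction on $k$, and then to conclude from nondegeneracy of the inner product. The guiding intuition is that Theorem \ref{rotation} says $\rho$ cyclically permutes the tensor slots of the test vector by one position; iterating $k$ times should shift by $k$ positions, so that after $n$ steps one has performed a full cyclic rotation and recovered the original test vector, forcing $\rho^n$ to act trivially.

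First I would record the inductive claim: for every $x\in P_{n,+}$, every $0\leq k\leq n$, and all $y_1,\dots,y_n\in M_1$,
$$\langle \rho^k(x), y_1\otimes\cdots \otimes y_n\rangle = \langle x, y_{k+1}\otimes\cdots\otimes y_n\otimes y_1\otimes\cdots\otimes y_k\rangle.$$
The case $k=0$ is trivial and $k=1$ is exactly Theorem \ref{rotation}. For the inductive step I would apply Theorem \ref{rotation} to $\rho^k(x)$, which is legitimate because $\rho$ preserves $P_{n,+}$ by Proposition \ref{rotationwelldefined}, so $\rho^k(x)\in P_{n,+}$. Writing $z_i=y_{i+1}$ for $1\leq i\leq n-1$ and $z_n=y_1$, Burns' formula gives $\langle \rho^{k+1}(x), y_1\otimes\cdots\otimes y_n\rangle=\langle \rho^k(x), z_1\otimes\cdots\otimes z_n\rangle$, and applying the induction hypothesis to $x$ with this relabeled test vector yields the $(k+1)$-fold cyclic shift after one checks that the reindexing $z_{k+1}\otimes\cdots\otimes z_n\otimes z_1\otimes\cdots\otimes z_k$ equals $y_{k+2}\otimes\cdots\otimes y_n\otimes y_1\otimes\cdots\otimes y_{k+1}$.

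Specializing to $k=n$ collapses the shift to the trivial permutation, since the segment $y_{n+1}\otimes\cdots\otimes y_n$ is empty, giving
$$\langle \rho^n(x), y_1\otimes\cdots \otimes y_n\rangle = \langle x, y_1\otimes\cdots \otimes y_n\rangle$$
for all $y_1,\dots,y_n\in M_1$. Finally I would conclude $\rho^n(x)=x$ from nondegeneracy: the simple tensors $y_1\otimes\cdots\otimes y_n$ with $y_i\in M_1$ span a dense subspace of $\bigotimes^n_{M_0} L^2(M_1,\tr_1)$, and $\rho^n(x)-x$ is orthogonal to all of them, hence vanishes.

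There is no conceptual obstacle here; the only points requiring care are bookkeeping ones. I must ensure Burns' formula is applied to an element of $P_{n,+}$ at each stage, which is exactly the content of the $\rho$-invariance in Proposition \ref{rotationwelldefined}, and I must track the index shift in the cyclic relabeling so that the $n$-fold iterate is genuinely the identity permutation. The density step is the one place I would want to state explicitly that both $x$ and $\rho^n(x)$ lie in the Hilbert space $\bigotimes^n_{M_0} L^2(M_1,\tr_1)$, so that equality of all inner products against a dense set forces equality of the vectors.
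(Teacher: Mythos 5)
Your proof is correct and is essentially the paper's own argument: the paper states this corollary without proof, but its proof of the analogous statement $\sigma^n=\id$ on $P_{n,-}$ proceeds exactly as you do, by repeatedly applying Burns' formula (using that the rotation preserves the relative commutant) until the cyclic shift of the test vector closes up, and then invoking nondegeneracy of the inner product. Your explicit induction and the density/faithfulness remark at the end are just careful write-ups of the same steps.
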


\begin{cor}
The rotation $\rotation{}$ on $P_{n,+}$ is well defined.
\end{cor}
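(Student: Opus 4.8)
The plan is to identify the action of the rotation tangle $\rotation{}$, as computed by the horizontal-line reading procedure of Section \ref{padefn}, with the algebraic map $\rho$ of Definition \ref{rotationdef}. Once this identification is in hand, well-definedness is immediate from Proposition \ref{rotationwelldefined}, which already records that $\rho$ preserves $P_{n,+}$ and is independent of the Pimsner--Popa basis. So the real work is to read the rotation tangle off Figure \ref{readtangle} and confirm that the result is exactly $\rho$.

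First I would put the rotation tangle into the standard form required by Section \ref{padefn}, with the input box carrying $x=x_1\otimes\cdots\otimes x_n\in P_{n,+}$ and the single through-strand being rotated realized by one minimum and one maximum joined by that strand. Sliding the horizontal line up from the bottom, the box inserts $x$; the minimum applies the cup step of Figure \ref{readtangle}, which introduces a creation operator $L_b$ against the basis together with the normalizing factor $d^{-1}$; and the maximum applies the cap step, which contracts the last factor $x_n$ via $E_{M_0}$, producing $R_b^*$ together with the factor $d$. Because the minimum and maximum are connected by the single rotated strand, their basis indices are forced to agree, so the reading produces one sum over $b\in B$ rather than two independent sums. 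The factors $d^{-1}$ and $d$ cancel, and collecting terms gives $\sum_{b\in B} b\otimes x_1\otimes\cdots\otimes x_{n-1}E_{M_0}(x_n b^*)=\rho(x)$, matching Definition \ref{rotationdef}.

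Having matched the reading with $\rho$, I would invoke Proposition \ref{rotationwelldefined}: it shows that $\rho$ sends $P_{n,+}$ into $P_{n,+}$ and that its restriction is independent of the basis $B$ used in the cup and cap steps, which are precisely the only choices the reading procedure makes here. Hence the rotation tangle determines a well-defined map. As a consistency check that the algebraic reading faithfully reflects the geometry, Burns' Theorem \ref{rotation} identifies $\rho$ as the adjoint of the cyclic shift $y_1\otimes\cdots\otimes y_n\mapsto y_2\otimes\cdots\otimes y_n\otimes y_1$, and the corollary $\rho^n=\id$ matches the isotopy fact that the $n$-fold composite of the rotation tangle is the identity tangle.

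The step I expect to be the main obstacle is the bookkeeping in the second paragraph: one must verify that inserting $x$ and then applying the cup and cap steps preserves $M_0$-invariance at every stage, and that the outcome is insensitive to whether the rotated strand is capped on the left or on the right, since a priori the reading could depend on the chosen standard form. In particular, the cap occurring at the extreme shaded region is only $M_0\text{--}M_0$ bilinear rather than $M_1\text{--}M_1$ bilinear, so one must confirm, as in the discussion following Figure \ref{readtangle}, that this is consistent with working inside an $(n,+)$-tangle. Burns' inner-product formula is the cleanest way to see that all such choices yield the same cyclic rotation, and thus that the identification with $\rho$ is unambiguous.
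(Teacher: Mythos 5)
Your proposal is correct and follows essentially the same route as the paper: the corollary is an immediate consequence of identifying the rotation tangle's reading (cup contributing $d^{-1}\sum_{b\in B}$, cap contributing $d\,E_{M_0}$, with the factors of $d$ cancelling) with the map $\rho$ of Definition \ref{rotationdef}, and then invoking Proposition \ref{rotationwelldefined} together with Burns' Theorem \ref{rotation} and $\rho^n=\id$. One small mechanical point: in Figure \ref{readtangle} only the cup introduces a basis sum (the cap is just an $E_{M_0}$-contraction), so there is never a question of two independent sums whose indices must be matched -- the agreement of $b$ and $b^*$ is automatic -- but this does not affect your conclusion.
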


\subsection{The rotation on $P_{n,-}$} We mimic Burns' treatment of the rotation on $P_{n,+}$ to define the rotation on $P_{n,-}$.

\begin{defn}\label{rotationminusdef}
Let $B$ be a Pimsner-Popa basis of $M_1$ over $M_0$. For 
$$x=x_1\otimes\cdots \otimes x_{n+1} \in \bigotimes^{n+1}_{M_0} M_1,$$
define $\D\sigma(x) = \sum\limits_{b\in B} R(b^*)R_1^* L_b(x)= \sum\limits_{b\in B} b\otimes x_1\otimes \cdots\otimes x_{n} E_{M_0}(x_{n+1})b^*$.
\end{defn}

\begin{prop}
The map $\sigma$ preserves $P_{n,-}$, and its restriction to $P_{n,-}$ is independent of the choice of $B$.
\end{prop}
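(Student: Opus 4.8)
The plan is to follow the proof of Proposition~\ref{rotationwelldefined} for $\rho$ almost verbatim; the single new point is a bookkeeping check that an auxiliary contraction map is $M_1$-bilinear rather than merely $M_0$-bilinear. First I would note that $\sigma$ is well defined on $\bigotimes^{n+1}_{M_0}M_1$: it is assembled from $L_b$, $R_1^*$ and $R(b^*)$, each of which descends to the relative tensor product, so $\sigma$ respects middle $M_0$-linearity. At this stage $\sigma$ may still depend on $B$.

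The key step is to factor $\sigma$ through the vector $\sum_{b} b\otimes x\otimes b^*$. Let $\Psi\colon\bigotimes^{n+3}_{M_0}M_1\to\bigotimes^{n+1}_{M_0}M_1$ be the fixed contraction that collapses the three rightmost tensorands $a\otimes c\otimes e$ to the single element $aE_{M_0}(c)e$, i.e. $y_1\otimes\cdots\otimes y_{n+3}\mapsto y_1\otimes\cdots\otimes y_n\otimes y_{n+1}E_{M_0}(y_{n+2})y_{n+3}$. Using only the $M_0$-bilinearity of $E_{M_0}$ one checks that $\Psi$ is well defined on the relative tensor product; and since $E_{M_0}$ is applied to the middle of the three collapsed legs while the trailing leg $y_{n+3}$ is left uncapped, $\Psi$ is moreover $M_1$-$M_1$ bilinear. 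Reading Definition~\ref{rotationminusdef} directly gives $\sigma(x)=\Psi(\sum_{b} b\otimes x\otimes b^*)$, and $\Psi$ does not involve $B$.

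Now take $x\in P_{n,-}$, so $x$ is $M_1$-invariant and hence a fortiori $M_0$-invariant, since $M_0\subset M_1$. Lemma~\ref{invariant} (with $A=M_0$) then produces $\phi_x$ and yields $\sum_{b} b\otimes x\otimes b^*=\phi_x(\sum_{b} b\otimes b^*)$. By Remark~\ref{basisproperties} the vector $\sum_{b} b\otimes b^*$ is the identity of $M_1\otimes_{M_0}M_1$ and is independent of $B$, so $\sum_{b} b\otimes x\otimes b^*$, and therefore $\sigma(x)=\Psi(\sum_{b} b\otimes x\otimes b^*)$, is independent of $B$; this is the second assertion. For the first, note that under the isomorphism $M_1\otimes_{M_0}M_1\cong M_2$ the identity $\sum_{b} b\otimes b^*$ corresponds to $1\in M_2$ and is therefore $M_1$-central; as $\phi_x$ is $M_1$-bilinear on its outer legs, $\sum_{b} b\otimes x\otimes b^*$ is $M_1$-central, and the $M_1$-bilinearity of $\Psi$ then forces $\sigma(x)=\Psi(\sum_{b} b\otimes x\otimes b^*)$ to be $M_1$-central, i.e. $\sigma(x)\in P_{n,-}$.

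I expect the only real obstacle to be the verification that $\Psi$ is $M_1$-bilinear, which is precisely where the argument departs from the $\rho$ case: for $\rho$ the analogous contraction caps the trailing $b^*$ against the last leg and is only $M_0$-bilinear, which is why $\rho$ preserves the $M_0$-central space $P_{n,+}$, whereas here $b^*$ survives and $\Psi$ is $M_1$-bilinear, so $\sigma$ preserves the $M_1$-central space $P_{n,-}$. Everything else is the routine $\rho$ computation.
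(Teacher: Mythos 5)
Your proposal is correct and is essentially the paper's own argument: the paper proves this by saying ``Similar to Proposition~\ref{rotationwelldefined},'' whose proof is exactly your factorization of the rotation through the basis-independent vector $\sum_{b\in B} b\otimes x\otimes b^*$ (via Lemma~\ref{invariant} and Remark~\ref{basisproperties}) followed by a fixed bilinear contraction not involving $B$. Your explicit verification that the contraction $\Psi$ is $M_1$-$M_1$ bilinear (rather than merely $M_0$-bilinear, as in the $\rho$ case) is precisely the detail the paper leaves implicit, and it is the right reason why $\sigma$ preserves the $M_1$-central space $P_{n,-}$.
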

\begin{proof}
Similar to Proposition \ref{rotationwelldefined}.
\end{proof}

\begin{thm}\label{minusrotation}
For $x\in P_{n,-}$,
$$
\langle \sigma(x), y_1\otimes\cdots \otimes y_{n+1}\rangle = \langle x, y_2\otimes \cdots \otimes y_{n}\otimes y_{n+1}y_1\otimes 1\rangle.
$$
\end{thm}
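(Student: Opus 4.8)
The plan is to mimic the proof of Theorem \ref{rotation} (Burns' computation) exactly, tracking the single extra tensor factor and the conditional expectation that defines $\sigma$. The map $\sigma$ is built from the creation/annihilation operators $R(b^*)$, $R_1^*$, and $L_b$, each of which has a known adjoint, so the computation should reduce to sliding these adjoints across the inner product and using the defining property of $E_{M_0}$ together with $M_1$-bilinearity of $x$ (which holds because $x\in P_{n,-}$ is $M_1$-invariant).

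\begin{proof}
Since $\D\sigma(x)=\sum_{b\in B} R(b^*)R_1^* L_b(x)$, taking adjoints across the inner product gives
{\fontsize{10}{10}{
\begin{align*}
\langle \sigma(x), y_1\otimes\cdots \otimes y_{n+1}\rangle
&=\sum\limits_{b\in B}\langle R(b^*)R_1^* L_b\, x,\, y_1\otimes\cdots \otimes y_{n+1}\rangle
=\sum\limits_{b\in B}\langle x,\, L_b^* R_1 R(b)\, y_1\otimes\cdots \otimes y_{n+1}\rangle\\
&=\sum\limits_{b\in B}\langle x,\, L_b^* R_1 \,(y_1\otimes\cdots \otimes y_{n+1}b)\rangle
=\sum\limits_{b\in B}\langle x,\, L_b^*\,(y_1\otimes\cdots \otimes y_{n+1}b\otimes 1)\rangle\\
&=\sum\limits_{b\in B}\langle x,\, E_{M_0}(b^* y_1)\,y_2\otimes\cdots \otimes y_{n+1}b\otimes 1\rangle
=\sum\limits_{b\in B}\langle xE_{M_0}(b^* y_1)^*,\, y_2\otimes\cdots \otimes y_{n+1}b\otimes 1\rangle\\
&=\sum\limits_{b\in B}\langle x,\, y_2\otimes\cdots \otimes y_{n+1}\,bE_{M_0}(b^*y_1)\otimes 1\rangle
= \langle x,\, y_2\otimes \cdots \otimes y_{n}\otimes y_{n+1}y_1\otimes 1\rangle.
\end{align*}}}
The penultimate equality uses $M_1$-invariance of $x$ to move $E_{M_0}(b^*y_1)^*=E_{M_0}(y_1^*b)$ inside the tensor, and the final equality applies $\sum_{b\in B} bE_{M_0}(b^*y_1)=y_1$ from Proposition \ref{ppbasis}(ii).
\end{proof}

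The main bookkeeping subtlety is the trailing $\otimes 1$ factor: $\sigma$ raises the tensor degree by one (matching $R_1^*$'s role in collapsing a factor via $E_{M_0}(x_{n+1})$), so one must verify that $R_1$ acts on the right-hand vector as appending $\otimes 1$, and correspondingly that $L_b^*$ then strips the leftmost factor against $E_{M_0}(b^*y_1)$. Provided these adjoint identities are read off correctly from the Fact preceding Lemma \ref{invariant}, the computation is a direct transcription of Burns' argument and presents no genuine obstacle.
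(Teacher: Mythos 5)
Your proof is correct and is exactly what the paper intends: the paper's own proof of Theorem \ref{minusrotation} is simply ``Similar to Theorem \ref{rotation},'' and your computation is the faithful adaptation of Burns' argument, taking adjoints of $R(b^*)R_1^*L_b$, applying the explicit formulas for $R(b)$, $R_1$, $L_b^*$, using $M_0$-centrality of $x$ together with middle $M_0$-linearity to slide $E_{M_0}(b^*y_1)$ into the last slot, and finishing with $\sum_{b\in B} bE_{M_0}(b^*y_1)=y_1$. (Only a trivial prose quibble: the $M_1$-invariance of $x$ is actually invoked at the equality producing $\langle xE_{M_0}(b^*y_1)^*,\cdot\rangle$, while the penultimate equality is just the adjoint of right multiplication plus $M_0$-middle-linearity.)
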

\begin{proof}
Similar to Theorem \ref{rotation}.
\end{proof}

\begin{cor}
$\sigma^n=\id$ on $P_{n,-}$.
\end{cor}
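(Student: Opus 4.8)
The plan is to imitate the argument that gives $\rho^n=\id$ on $P_{n,+}$: there one iterates the pairing formula of Theorem \ref{rotation} and observes that the induced operation on test vectors, $y_1\otimes\cdots\otimes y_n\mapsto y_2\otimes\cdots\otimes y_n\otimes y_1$, is an honest cyclic shift and hence returns to itself after $n$ steps. First I would note that, since $\sigma$ preserves $P_{n,-}$ (by the proposition preceding Theorem \ref{minusrotation}), the formula of that theorem may be applied to $\sigma^k(x)$ for every $k$. Writing $\tau$ for the operation $\tau(y_1\otimes\cdots\otimes y_{n+1})=y_2\otimes\cdots\otimes y_n\otimes y_{n+1}y_1\otimes 1$ on $\bigotimes^{n+1}_{M_0}M_1$ read off from the right-hand side, iteration (and bilinearity of the pairing) yields $\langle\sigma^n(x),w\rangle=\langle x,\tau^n(w)\rangle$ for every simple tensor $w=y_1\otimes\cdots\otimes y_{n+1}$ and every $x\in P_{n,-}$.

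The combinatorial heart is to compute $\tau^n$. Unlike the $+$ case, $\tau$ is not a plain cyclic shift: each application fuses the leading factor onto the last genuine factor and pads the tail with a $1$. I would prove by induction on $1\le k\le n$ that
$$\tau^k(y_1\otimes\cdots\otimes y_{n+1})=y_{k+1}\otimes\cdots\otimes y_n\otimes (y_{n+1}y_1)\otimes y_2\otimes\cdots\otimes y_k\otimes 1,$$
the inductive step being a direct application of the definition of $\tau$ in which the trailing $1$ is exactly what gets multiplied by the new leading factor and so simply reinstates the next $y$. At $k=n$ this collapses to $\tau^n(y_1\otimes\cdots\otimes y_{n+1})=(y_{n+1}y_1)\otimes y_2\otimes\cdots\otimes y_n\otimes 1$.

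It then remains to identify $\langle x,\tau^n(w)\rangle$ with $\langle x,w\rangle$, and this is where membership in $P_{n,-}$ (as opposed to mere cyclicity) is essential. Since $P_{n,-}$ consists, via the $M_1$--$M_1$ bimodule isomorphism $\theta$ of Proposition \ref{renormalize}, of the $M_1$-central vectors in $\bigotimes^{n+1}_{M_0}M_1$, we have $L(a)x=R(a)x$ for all $a\in M_1$; taking adjoints (using $L(a)^*=L(a^*)$ and $R(a)^*=R(a^*)$, both immediate from the inner product formulas) this becomes $\langle x,a\cdot w\rangle=\langle x,w\cdot a\rangle$, where $a$ acts on the first, resp.\ last, tensor leg. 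Applying this with $a=y_{n+1}$ and $w'=y_1\otimes y_2\otimes\cdots\otimes y_n\otimes 1$ gives $a\cdot w'=\tau^n(w)$ and $w'\cdot a=y_1\otimes\cdots\otimes y_{n+1}=w$, whence $\langle x,\tau^n(w)\rangle=\langle x,w\rangle$. As $w$ is arbitrary, $\sigma^n(x)=x$.

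I expect the main obstacle to be precisely this last step. The temptation is to treat $\sigma$ as a naive rotation, but $\tau^n$ is visibly \emph{not} the identity on $\bigotimes^{n+1}_{M_0}M_1$: the last leg is a spurious $1$ and the first leg carries the product $y_{n+1}y_1$. The real content is that these two discrepancies cancel against each other under the $M_1$-centrality of $x$, so the delicate points are the correct translation of centrality into a statement about the pairing and the careful bookkeeping of which leg each of $L(a),R(a)$ acts on. By comparison, the induction computing $\tau^n$ is routine.
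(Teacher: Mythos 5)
Your proof is correct and is essentially the paper's own argument: iterate Theorem \ref{minusrotation} to reduce $\langle\sigma^n(x),w\rangle$ to $\langle x,\, y_{n+1}y_1\otimes y_2\otimes\cdots\otimes y_n\otimes 1\rangle$, then use $M_1$-centrality of $x$ together with the adjoint relations for left/right multiplication (the paper calls this ``Burns' trick'': $\langle x, y_{n+1}y_1\otimes\cdots\otimes 1\rangle=\langle y_{n+1}^*x,\cdots\rangle=\langle xy_{n+1}^*,\cdots\rangle=\langle x, y_1\otimes\cdots\otimes y_{n+1}\rangle$) to absorb the fused leading factor and the trailing $1$ into each other. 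Your explicit induction on $\tau^k$ and the formulation of centrality as $\langle x,a\cdot w\rangle=\langle x,w\cdot a\rangle$ merely make precise the steps the paper writes out inline.
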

\begin{proof}
As $\sigma$ preserves $P_{n,-}$, we repeatedly apply Theorem \ref{minusrotation} for $x\in P_{n,-}$ to get\begin{align*}
\langle \sigma^n (x), y_1\otimes\cdots \otimes y_{n+1}\rangle &=\langle \sigma^{n-1}(x), y_2\otimes \cdots \otimes y_{n}\otimes y_{n+1}y_1\otimes 1\rangle\\
&=\langle \sigma^{n-2}(x), y_3\otimes \cdots \otimes y_{n}\otimes y_{n+1}y_1\otimes y_2\otimes 1\rangle\\
&=\cdots=\langle x, y_{n+1}y_1\otimes y_2\otimes \cdots \otimes y_{n}\otimes 1\rangle.
\end{align*}
We then invoke Burns' trick again to get
\begin{align*}
\langle x, y_{n+1}y_1\otimes y_2\otimes \cdots \otimes y_{n}\otimes 1\rangle&=\langle y_{n+1}^*x, y_1\otimes \cdots \otimes y_{n}\otimes  1\rangle\\
&=\langle x y_{n+1}^*,y_1\otimes \cdots \otimes y_{n}\otimes 1\rangle\\
&=\langle x, y_1\otimes \cdots \otimes y_{n}\otimes  y_{n+1}\rangle.
\end{align*}
\end{proof}

\begin{cor}
The rotation $\minusrotation{}$ on $P_{n,-}$ is well defined.
\end{cor}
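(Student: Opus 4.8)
The plan is to follow verbatim the template already set up for the rotation on $P_{n,+}$, since the whole minus-rotation subsection runs parallel to Burns' treatment of $\rho$. The object whose well-definedness is asserted is the planar tangle $\minusrotation{}$ acting on $P_{n,-}$, and the claim I would establish first is that its action, computed by the reading procedure of Section \ref{padefn}, coincides with the algebraic map $\sigma$ of Definition \ref{rotationminusdef}.

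So the first step is to check that reading $\minusrotation{}$ in standard form reproduces $\sigma$. Concretely, one isotopes the rotation tangle so that its single input rectangle is in standard form, sweeps the horizontal line from bottom to top, and applies the transition rules of Figure \ref{readtangle}. The clean way to pin down the resulting operator is via Theorem \ref{minusrotation}: reading $\minusrotation{}$ and then pairing with $y_1\otimes\cdots\otimes y_{n+1}$ is the same as pairing the input against the cyclically shifted tensor, so the inner-product identity of Theorem \ref{minusrotation} is exactly the verification that the tangle's action equals $\sigma$. The merged leg $y_{n+1}y_1$, the factor $E_{M_0}(x_{n+1})$, and the trailing $\otimes 1$ in that formula are precisely the residue of the caps and cups implementing the rotation in the shaded setting.

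Granting this identification, well-definedness is then immediate from the three facts already in hand. The Proposition following Definition \ref{rotationminusdef} gives that $\sigma$ preserves $P_{n,-}$ and is independent of the Pimsner-Popa basis $B$; hence the only genuine choice entering the reading procedure, namely the basis, does not affect the output, and the output lands back in $P_{n,-}$. The Corollary $\sigma^n=\id$ then supplies the topological consistency: $n$ clicks of the fundamental shading-preserving rotation amount to a full $2\pi$ turn of the input disk, which is planar-isotopic to the identity, so one is forced to have $\sigma^n=\id$, and we do. Together these say that $\minusrotation{}$ acts as a well-defined automorphism of $P_{n,-}$ of order dividing $n$, which is the isotopy invariance needed for the minus part.

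The \textbf{main obstacle} is the first step, matching the reading of $\minusrotation{}$ with $\sigma$, because the shading conventions in the minus case are reversed relative to the plus case. One must track which transition maps of Figure \ref{readtangle} apply, in particular the capping map that is only $M_0$--$M_0$ bilinear rather than $M_1$--$M_1$ bilinear. The point to confirm is that this cap occurs exactly where the distinguished (starred) interval forces the ambient invariance to be $M_0$-invariance, so that $M_1$-invariance of the output is nonetheless preserved and the bookkeeping yields precisely the expression of Theorem \ref{minusrotation}. Once that matching is in place, everything else is the formal consequence recorded above.
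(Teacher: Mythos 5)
Your proposal is correct and follows essentially the same route as the paper: the paper states this corollary without a written proof precisely because it is the formal consequence of the immediately preceding results — the Proposition that $\sigma$ preserves $P_{n,-}$ and is independent of the Pimsner--Popa basis, Theorem \ref{minusrotation} identifying $\sigma$ with the cyclic shift implemented by the rotation tangle, and the Corollary $\sigma^n=\id$ giving the consistency with a full $2\pi$ isotopy. Your flagged obstacle (the $M_0$--$M_0$ bilinear cap in the shaded reading) is exactly the point the paper disposes of at the end of Section \ref{padefn}, so your outline matches the intended argument.
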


\subsection{Uniqueness of the canonical planar algebra}
We have the following facts whose proofs are similar to those in \cite{math/9909027} and will be omitted. We shade tangles as much as possible, but sometimes we will not have enough information.

\begin{prop}[Multiplication]\label{multiplication1}
Suppose $x,y\in M_n$ such that 
$$\theta_n^{-1}(x)=x_1\otimes\cdots\otimes x_n\hsp{and} \theta_n^{-1}(y)=y_1\otimes\cdots\otimes y_n$$
Then
$$
\theta_n^{-1}(xy)=
{\fontsize{10}{10}{
\begin{cases}
x_1\otimes\cdots \otimes x_k E_{M_0}(x_{k+1}E_{M_0}(x_{k+2}(\cdots) y_{k-1})y_{k})\otimes y_{k+1}\otimes\cdots\otimes y_{2k} &n=2k\\
x_1\otimes\cdots \otimes x_{k+1} E_{M_0}(x_{k+2}E_{M_0}(x_{k+3}(\cdots) y_{k-1})y_{k}) y_{k+1}\otimes\cdots\otimes y_{2k+1}&n=2k+1.
\end{cases}}}
$$
\end{prop}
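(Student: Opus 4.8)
The plan is to prove the formula by a direct computation in the tower, expanding both sides with the explicit form $\theta_n(x_1\otimes\cdots\otimes x_n)=x_1v_1x_2v_2\cdots v_{n-1}x_n$ of Proposition \ref{renormalize} and then collapsing the resulting word in the $E_i$'s and the $x_i,y_j$ using the Temperley--Lieb relations. Conceptually this is nothing but reading the multiplication tangle that stacks the rectangle for $x$ above the rectangle for $y$ and joins their adjacent strings: each shaded cup produced by the join is read, via the third rule of Figure \ref{readtangle}, as a map $u\otimes v\otimes w\mapsto d\,uE_{M_0}(v)\otimes w$, and nesting these cups is exactly what generates the nested conditional expectations in the statement. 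I will, however, carry this out algebraically, since the identity is asserted for arbitrary $x,y\in M_n$ and not merely for elements of the relative commutant (which need not span $M_n$).

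The computation is driven by a single contraction identity. Writing $x=\bigl(x_1v_1\cdots v_{n-2}x_{n-1}\bigr)v_{n-1}x_n$ and $y=y_1v_1\bigl(y_2v_2\cdots v_{n-1}y_n\bigr)$, the two innermost legs meet in the factor $v_{n-1}\,x_ny_1\,v_1$. Because $v_{n-1}=E_{n-1}\cdots E_2E_1$ ends in $E_1$, because $v_1=E_1$, and because the rescaled Jones relation gives $E_1aE_1=dE_{M_0}(a)E_1$, while $E_{M_0}(x_ny_1)\in M_0$ commutes with every $E_i$, one obtains
\[
v_{n-1}\,x_ny_1\,v_1=d\,E_{M_0}(x_ny_1)\,v_{n-1}.
\]
Absorbing the scalar $E_{M_0}(x_ny_1)$ into the neighbouring leg replaces $x_{n-1}$ by $x_{n-1}E_{M_0}(x_ny_1)$, which is precisely the innermost conditional expectation of the claimed formula.

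I would then set up an induction that repeats this move, peeling off the pairs $(x_n,y_1),(x_{n-1},y_2),\dots$ from the inside out and building the nested expectations one layer at a time. After each contraction the remaining factor carries a leftover $v_{n-1}$ sitting against the next right-hand leg, and the key step is to reorganise this using the relations $E_iE_{i\pm1}E_i=E_i$ together with the fact that a factor from $M_1$ commutes with every $E_j$ for $j\ge 2$; for instance when $n=3$ these relations collapse $v_2\,y_2\,v_2$ to $y_2v_2$, returning the product to standard $\theta$-form with the interface advanced by one slot so the induction can continue. The parity of $n$ governs the final layer: for $n=2k$ the last surviving inner leg is $x_k$ alone, so the accumulated scalar attaches on one side as $x_kE_{M_0}(\cdots)$, whereas for $n=2k+1$ both middle legs $x_{k+1}$ and $y_{k+1}$ survive and the scalar is sandwiched between them as $x_{k+1}E_{M_0}(\cdots)y_{k+1}$.

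The main obstacle is not conceptual but the bookkeeping in this reorganisation: one must verify that the Temperley--Lieb collapses of the leftover $v$-strings telescope cleanly at every stage, since by hand they already become delicate by $n=4$. I would tame this by first isolating an auxiliary normal-form lemma describing how the leftover $v_{n-1}$ absorbs into $y_2v_2\cdots v_{n-1}y_n$, and then feeding it into the induction so that each contraction becomes a single clean application of the lemma. Along the way one keeps track of the scalar produced at each step (one factor of $d$ per contraction, hence an overall power of $d$), pinning down the normalisation against the base cases $n=1$, where $\theta_1^{-1}(xy)=x_1y_1$, and $n=2$, which are immediate.
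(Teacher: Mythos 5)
Your overall strategy---expanding both factors via $\theta_n(x_1\otimes\cdots\otimes x_n)=x_1v_1x_2v_2\cdots v_{n-1}x_n$ from Proposition \ref{renormalize} and contracting from the inside out with the Temperley--Lieb relations---is the natural one (the paper itself omits this proof, citing similarity to \cite{math/9909027}, so you are being judged on correctness rather than against a written argument). Your individual identities are also correct: indeed $v_{n-1}x_ny_1v_1=d\,E_{M_0}(x_ny_1)v_{n-1}$, since $E_{M_0}(x_ny_1)$ commutes with every $E_j$, and for $n=3$ one does have $v_2y_2v_2=E_2E_1E_2y_2E_1=E_2y_2E_1=y_2v_2$.

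The genuine gap is exactly the scalar bookkeeping you defer to the end. The factors of $d$ produced by your contractions do \emph{not} cancel: with the paper's normalization $v_j=E_j\cdots E_1$, $E_j=de_j$, one factor of $d$ survives for each nested conditional expectation, so the computation you outline terminates at $d^{\lfloor n/2\rfloor}$ times the displayed formula, not at the formula itself. Your own base case already shows this. For $n=2$,
$$
\theta_2(x_1\otimes x_2)\,\theta_2(y_1\otimes y_2)=x_1E_1x_2y_1E_1y_2=d\,x_1E_{M_0}(x_2y_1)E_1y_2=d\,\theta_2\bigl(x_1E_{M_0}(x_2y_1)\otimes y_2\bigr),
$$
so $\theta_2^{-1}(xy)$ carries a factor $d$ that the stated formula lacks; concretely, taking $x=y=E_1=\theta_2(1\otimes 1)$, the formula predicts $E_1^2=E_1$, whereas $E_1^2=dE_1$. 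Calling this case ``immediate'' and expecting the normalisation to pin itself down is precisely where the proof breaks: carried out honestly, your induction proves the identity only up to $d^{\lfloor n/2\rfloor}$ (one $d$ per $E_{M_0}$), and no choice of bookkeeping within your scheme can remove these factors. This is in fact consistent with the rest of the paper---in Remark \ref{multiplication2} the multiplication tangle, put in standard form, has $\lfloor n/2\rfloor$ shaded caps, each of which contributes a factor $d$ by the third rule of Figure \ref{readtangle}---so a correct write-up must either carry the factor $d^{\lfloor n/2\rfloor}$ explicitly or renormalize $\theta_n$; your proposal does neither and therefore cannot reach the stated conclusion. A secondary, lesser issue: the ``auxiliary normal-form lemma'' that is supposed to drive the induction for $n\geq 4$ is never formulated or proved, so beyond $n=3$ the argument is only a plan; but the $d$-bookkeeping is the step that would actually fail.
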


\begin{rem}\label{multiplication2}
If $x,y$ as above are in $M_i'\cap M_n$ where $i\in\{0,1\}$, then\\
$\D
\theta_n^{-1}(xy)=\multiplication{\text{{\fontsize{10}{10}{$x_1\otimes\cdots\otimes x_n$}}}}{\text{{\fontsize{10}{10}{$y_1\otimes\cdots\otimes y_n$}}}}
$
where the shading depends on $i$ and the parity of $n$.
\end{rem}

\begin{prop}[$*$-Structure]\label{starstructure}
Suppose $x\in M_n$ such that $\theta_n^{-1}(x)=x_1\otimes\cdots\otimes x_n$. Then $\theta_n^{-1}(x^*)=x_n^*\otimes\cdots \otimes x_1^*$. 
\end{prop}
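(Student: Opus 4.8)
The plan is to prove the equivalent identity
$$
\theta_n(x_n^*\otimes\cdots\otimes x_1^*)=\theta_n(x_1\otimes\cdots\otimes x_n)^*
$$
by induction on $n$, reducing each inductive step to a single commutation identity for the $v_j$'s. Write $P_n=\theta_n(x_1\otimes\cdots\otimes x_n)=x_1v_1x_2v_2\cdots v_{n-1}x_n$ (Proposition \ref{renormalize}). Since $v_j=E_jE_{j-1}\cdots E_1$ and each $E_i=E_i^*$ by the Temperley-Lieb relations, we have $v_j^*=E_1E_2\cdots E_j$. The defining formula gives the recursion $P_n=P_{n-1}v_{n-1}x_n$, hence $P_n^*=x_n^*v_{n-1}^*P_{n-1}^*$, and the inductive hypothesis identifies $P_{n-1}^*=\theta_{n-1}(x_{n-1}^*\otimes\cdots\otimes x_1^*)$. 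Thus the step comes down to rewriting $v_{n-1}^*\,\theta_{n-1}(x_{n-1}^*\otimes\cdots\otimes x_1^*)$ as $v_1x_{n-1}^*v_2x_{n-2}^*\cdots v_{n-1}x_1^*$, i.e.\ as the tail of $\theta_n(x_n^*\otimes\cdots\otimes x_1^*)$.

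The key is the following auxiliary identity, which I would prove separately by induction on $m$: for $y_1,\dots,y_m\in M_1$,
$$
v_m^*\,\theta_m(y_1\otimes\cdots\otimes y_m)=v_1y_1v_2y_2\cdots v_my_m.
$$
The base case $m=1$ reads $E_1y_1=v_1y_1$. For the step, combine $v_m^*=v_{m-1}^*E_m$ with the recursion $\theta_m(y_1\otimes\cdots\otimes y_m)=\theta_{m-1}(y_1\otimes\cdots\otimes y_{m-1})\,v_{m-1}y_m$ to get
$$
v_m^*\,\theta_m(y_1\otimes\cdots\otimes y_m)=v_{m-1}^*\,E_m\,\theta_{m-1}(y_1\otimes\cdots\otimes y_{m-1})\,v_{m-1}y_m.
$$
Because $\theta_{m-1}(y_1\otimes\cdots\otimes y_{m-1})\in M_{m-1}$ and $e_m\in M_{m-1}'$, the generator $E_m$ commutes with $\theta_{m-1}(y_1\otimes\cdots\otimes y_{m-1})$; moving it to the right, applying the inductive hypothesis to $v_{m-1}^*\,\theta_{m-1}(y_1\otimes\cdots\otimes y_{m-1})$, and using $E_mv_{m-1}=v_m$ yields $(v_1y_1\cdots v_{m-1}y_{m-1})v_my_m$, as required. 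Applying this with $m=n-1$ and $y_j=x_{n-j}^*$ finishes the main induction, the base case $n=1$ being $P_1^*=x_1^*=\theta_1(x_1^*)$.

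The main obstacle is conceptual rather than computational: a naive term-by-term comparison of the fully expanded adjoint $x_n^*v_{n-1}^*x_{n-1}^*\cdots v_1^*x_1^*$ with $\theta_n(x_n^*\otimes\cdots\otimes x_1^*)$ fails, since the increasing products $v_j^*=E_1\cdots E_j$ are not the decreasing products $v_j=E_j\cdots E_1$; the Temperley-Lieb factors must genuinely migrate between the gaps. The decisive simplification is to perform this migration one top generator at a time, sliding the single $E_m$ across the \emph{entire} lower algebra $M_{m-1}$ at once (using $e_m\in M_{m-1}'$) rather than past individual tensor entries. Once this viewpoint is adopted the two nested inductions collapse cleanly, and no use of the far-commutation relations $E_iE_j=E_jE_i$ is needed beyond what is already packaged in $e_m\in M_{m-1}'$.
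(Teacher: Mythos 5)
Your proof is correct. Note that the paper itself does not prove this proposition: it is listed among the facts ``whose proofs are similar to those in \cite{math/9909027} and will be omitted,'' so there is no in-paper argument to compare against; your write-up is a legitimate filling-in of that omission. The ingredients you use are all available in the paper: self-adjointness of the $E_i$'s (from $E_i^2 = dE_i = dE_i^*$), the recursion $\theta_m(y_1\otimes\cdots\otimes y_m)=\theta_{m-1}(y_1\otimes\cdots\otimes y_{m-1})\,v_{m-1}y_m$ built into Proposition \ref{renormalize}, and the standard fact $e_m\in M_{m-1}'\cap M_{m+1}$ for the Jones projection of $M_{m-1}\subset M_m$. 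Your auxiliary identity
$$
v_m^*\,\theta_m(y_1\otimes\cdots\otimes y_m)=v_1y_1v_2y_2\cdots v_my_m
$$
is exactly the right lemma to isolate, and the inductive step is sound: $E_m$ commutes with $\theta_{m-1}(y_1\otimes\cdots\otimes y_{m-1})\in M_{m-1}$, and $E_mv_{m-1}=v_m$ closes the recursion. Your closing observation is also accurate: since $E_j\in M_{j+1}\subset M_{m-1}$ for $j\leq m-2$, the far-commutation relations $E_iE_j=E_jE_i$ are subsumed by $e_m\in M_{m-1}'$, so nothing beyond that single commutant fact is needed. One small point worth making explicit if you expand this into a full write-up: the proposition as stated concerns a single simple tensor, and the general statement for arbitrary elements of $\bigotimes^n_{M_0}M_1$ follows by antilinear extension, with well-definedness of the ``reverse-and-star'' operation on the relative tensor product coming for free from the bijectivity of $\theta_n$.
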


\begin{prop}[Jones Projections]
\item[(1)] For $n\geq 1$, the Jones projection $E_n\in P_{n+1,+}$ is given by $\jonesprojection$\,.
\item[(2)] For $n\geq 2$, the Jones projection $E_{n}\in P_{n,-}$ is given by $\minusjonesprojection$\,.
\end{prop}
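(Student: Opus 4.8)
The plan is to show that the no-input tangle drawn in each part, evaluated by the reading procedure of Figure \ref{readtangle}, is exactly $\theta_{n+1}^{-1}(de_n)$, where $E_n=de_n$. First note the algebraic side: $e_n$ commutes with $M_{n-1}$, so for every $n\ge 1$ we have $de_n\in M_0'\cap M_{n+1}$, giving an element of $P_{n+1,+}$, and for $n\ge 2$ also $de_n\in M_1'\cap M_{n+1}$, giving an element of $P_{n,-}$. Both $P_{n+1,+}$ and $P_{n,-}$ sit inside $\bigotimes^{n+1}_{M_0}M_1$ (the space on which the rotation $\sigma$ acts), and in each case the underlying vector is $\theta_{n+1}^{-1}(de_n)$. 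So it suffices to read each picture and check that $\theta_{n+1}$ sends the resulting tensor back to $de_n$; part (2) will then be the same computation as part (1), run with the $(n,-)$-shading.

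First I would put the $e_n$ diagram in standard form, so that the $n-1$ through-strings become nested ``rainbow'' arcs and the cap--cup pair becomes two adjacent arcs nested inside them. Reading from the bottom up with Figure \ref{readtangle}, every arc is a minimum, and is a shaded or an unshaded cup according to the alternating shading: a shaded cup inserts a $1$ (the rule $x\otimes y\mapsto x\otimes 1\otimes y$), while an unshaded cup splits a shaded slot as $x\mapsto d^{-1}\sum_b b\otimes b^*x$. Carrying this out produces an explicit $M_0$-invariant tensor $\xi_n\in\bigotimes^{n+1}_{M_0}M_1$; for instance $\xi_1=1\otimes 1$, $\xi_2=d^{-2}\sum_{b,c}b\otimes c\otimes c^*b^*$, and $\xi_3=d^{-1}\sum_a a\otimes 1\otimes 1\otimes a^*$, the precise shape alternating with the parity of $n$, which is what fixes whether the innermost region is shaded.

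It then remains to apply $\theta_{n+1}(x_1\otimes\cdots\otimes x_{n+1})=x_1v_1\cdots v_nx_{n+1}$ and simplify. Writing $v_j=d^je_j\cdots e_1$, the interspersed products $v_1v_2\cdots v_n$ collapse using the Temperley--Lieb relations $e_1e_2e_1=d^{-2}e_1$ and $e_ie_j=e_je_i$ for $|i-j|>1$, together with the fact that $e_i$ commutes with $M_1$ for $i\ge 2$; each basis sum is then absorbed by the Pimsner--Popa identity $\sum_b be_1b^*=1$. For $n=2$ this reads $d\sum_{b,c}be_1ce_2e_1c^*b^*=d\sum_b be_1e_2b^*=de_2$, and the general case telescopes in the same way down to $de_n$. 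Hence $\theta_{n+1}(\xi_n)=de_n$, proving (1); part (2) follows verbatim, since for $n\ge 2$ we have $de_n\in M_1'\cap M_{n+1}$ and $\minusjonesprojection$ is read by the same rules to the same tensor.

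The main obstacle is the bookkeeping in the reading step: one must track the shadings, the rule applied at each extremum, and in particular the positions at which the basis sums are inserted, all of which determine $\xi_n$ and are sensitive to the parity of $n$. The telescoping verification afterward is routine given the Temperley--Lieb and Pimsner--Popa relations. A parity-free alternative would be to induct on the number of rainbow arcs, showing that wrapping one further nested arc around an element corresponds on the algebra side to a shift coming from the basic construction, with base case the two-cup diagram $\xi_1=1\otimes 1$; this trades the explicit casework for an identification of the action of the ``add a nested arc'' tangle.
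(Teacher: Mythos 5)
Your proposal is correct, and it is essentially the intended argument: the paper itself gives no proof of this proposition, stating only that the proofs ``are similar to those in \cite{math/9909027}'' and omitting them, and the computation deferred to there is precisely the one you carry out --- read the standard-form Temperley--Lieb diagram into a tensor via Figure \ref{readtangle}, then check that $\theta_{n+1}$ collapses it to $de_n$ using $e_1e_2e_1=d^{-2}e_1$, far commutation, the fact that $e_i$ commutes with $M_1$ for $i\geq 2$, and the Pimsner--Popa identity $\sum_b be_1b^*=1$. Your sample readings $\xi_1=1\otimes 1$, $\xi_2=d^{-2}\sum_{b,c}b\otimes c\otimes c^*b^*$, $\xi_3=d^{-1}\sum_a a\otimes 1\otimes 1\otimes a^*$ are all correct, as is the $n=2$ telescoping $d\sum_{b,c}be_1ce_2e_1c^*b^*=de_2$, and your observation that the $(n,-)$ diagram, though shaded differently and with one fewer nested arc, reads to the same underlying tensor is exactly why part (2) needs no separate computation. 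The only soft spot is that the general-$n$ telescoping is asserted rather than proved, but it is a routine induction (each additional nested arc contributes one basis sum absorbed by $\sum_a ae_1a^*=1$ and one factor $e_ke_{k+1}e_k=d^{-2}e_k$), so this is a matter of bookkeeping rather than a gap --- indeed your write-up supplies strictly more detail than the paper does.
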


\begin{rem}\label{multisteppicture}
The multistep basic construction projection of Proposition \ref{multistep} is given by
$f^n_{n-k}=d^{-k}\multistep{n-k}{k}\,$. 
\end{rem}

\begin{prop}[Inclusions]
\item[(1)]
Let  $i_n\colon M_0'\cap M_n\to M_0'\cap M_{n+1}$ be the inclusion. Then the inclusion\\ 
$\theta_{n+1}\circ i_n\circ \theta_n\colon P_{n,\pm}\to P_{n+1,\pm}$ is given by $\inclusion{}$\,.
\item[(2)] If $x\in P_{n,-}$, then $\D\minusinclusion{x}=x\in P_{n+1,+}$.
\end{prop}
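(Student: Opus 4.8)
The plan is to verify both identities by reading the displayed tangles through the sliding-line procedure of Section~\ref{padefn} (Figure~\ref{readtangle}) and comparing with the algebraic map each is supposed to implement. The transported maps are $\theta_{n+1}^{-1}\circ i_n\circ\theta_n$ in part~(1), and the operator inclusion $M_1'\cap M_{n+1}\hookrightarrow M_0'\cap M_{n+1}$ in part~(2); I would settle (2) first, since it is essentially formal, and then concentrate on (1).

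For (2): since $M_0\subseteq M_1$ we have $M_1'\subseteq M_0'$, hence $M_1'\cap M_{n+1}\subseteq M_0'\cap M_{n+1}$. Both $P_{n,-}$ and $P_{n+1,+}$ are obtained by applying $\theta_{n+1}^{-1}$ to these relative commutants, so this is a literal inclusion of subspaces of $\bigotimes^{n+1}_{M_0}M_1$, and the claim is merely that the tangle realizes it as the identity on tensors. Reading the tangle by the procedure of Section~\ref{padefn}, the only feature is the shaded strip adjoined on the far left, which relocates the distinguished interval and relabels the left-most region but crosses no maximum or minimum of a string; hence $\eta_y$ is never altered and the output tensor equals the input. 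The change of shading encodes exactly the passage from the $M_1$-invariance defining $P_{n,-}$ to the weaker $M_0$-invariance defining $P_{n+1,+}$, which holds automatically, so the identity follows.

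For (1), I would first read the inclusion tangle. Inserting $\xi=\theta_n^{-1}(x)=x_1\otimes\cdots\otimes x_n$ for the input box by Lemma~\ref{invariant}, the added strand -- which lies on the right in the rectangular picture but falls in the central region once the boundary strings are folded to the top for the standard-form reading -- contributes a single minimum, so exactly one elementary move of Figure~\ref{readtangle} is applied there. By parity this is the first move when $n$ is even (inserting a $1$ into the central unshaded slot, giving $x_1\otimes\cdots\otimes x_{n/2}\otimes 1\otimes x_{n/2+1}\otimes\cdots\otimes x_n$) or the second move when $n$ is odd (splitting the central shaded factor $x_m$ as $x_m\mapsto d^{-1}\sum_b x_mb\otimes b^*$). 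The resulting $\eta\in\bigotimes^{n+1}_{M_0}M_1$ is independent of the Pimsner--Popa basis by Lemma~\ref{invariant} and Remark~\ref{basisproperties}.

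The real work in (1) is to verify that $\theta_{n+1}(\eta)=x$ as operators in $M_{n+1}$, which is what exhibits this reading as the operator inclusion $i_n$. Writing $v_k=E_k\cdots E_1$ and expanding $\theta_{n+1}(\eta)$, the inserted data sits between the blocks $v_m$ and $v_{m+1}$; I would collapse it using the Temperley--Lieb relations $E_iE_{i\pm1}E_i=E_i$ and $E_iE_j=E_jE_i$ for $|i-j|>1$, together with $\sum_b be_1b^*=1$ in the odd case, checking that the surplus Jones projections telescope and the indices realign to recover $x_1v_1\cdots v_{n-1}x_n=\theta_n(\xi)$. I expect this telescoping to be the main obstacle: one must track the powers of $d$ and the nested Temperley--Lieb cancellations uniformly in $n$ and in both parities, and confirm that the standard-form folding genuinely places the added minimum in the central region, so that the cancellations close up symmetrically. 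The small cases $n\le 4$ already show the mechanism (for instance $E_1E_2E_1=E_1$ absorbs the inserted $1$ when $n=2$), and I would organize the general argument as an induction on $n$ along the tower.
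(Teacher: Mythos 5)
Your reading of the two tangles is largely correct, and for context the paper itself supplies no argument here: this proposition sits under the blanket remark that the proofs are ``similar to those in \cite{math/9909027}'' and are omitted, so your proposal must stand on its own. Part (2) you get right in substance: $P_{n,-}=\theta_{n+1}^{-1}(M_1'\cap M_{n+1})\subseteq\theta_{n+1}^{-1}(M_0'\cap M_{n+1})=P_{n+1,+}$ is a literal inclusion of subspaces of $\bigotimes^{n+1}_{M_0}M_1$, and the tangle realizes it as the identity. But your stated mechanism is off: in the standard form that the reading procedure requires, the added left string joins the first and last output points, so it must dip under the input box and necessarily has a minimum. Passing that minimum creates a new shaded factor $1\in M_1$ (first move of Figure~\ref{readtangle}), and this factor disappears only because the $(n,-)$-box is inserted over $M_1$ (Lemma~\ref{invariant} with $l=1$), giving $1\otimes_{M_1}x\otimes_{M_1}1=x$; the claim that ``no maximum or minimum is crossed'' is false, even though the conclusion survives.

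The genuine gap is in part (1), precisely where you flag it. Your reading of the inclusion tangle in the $+$ case is correct (insert $1$ in the middle slot for $n=2k$; split the middle factor as $x_m\mapsto d^{-1}\sum_b x_mb\otimes b^*$ for $n$ odd), so the proposition reduces to the operator identity $\theta_{n+1}(x_1\otimes\cdots\otimes x_k\otimes 1\otimes x_{k+1}\otimes\cdots\otimes x_n)=\theta_n(x_1\otimes\cdots\otimes x_n)$ in $M_{n+1}$, together with its odd analogue. That identity \emph{is} the content of the statement, and you verify it only for $n\le 4$, deferring the general case to an expected telescoping. The telescoping is not routine in the form you suggest: the natural local identity $v_kv_{k+1}\,y\,v_{k+2}=v_k\,y\,v_{k+1}$ (for $y\in M_1$) does hold for $k=2$, but it already fails for $k=1$, where one computes $v_1v_2\,y\,v_3=E_3\,v_1yv_2\neq v_1yv_2$; the stray projections are only killed by cancellations propagating from the right end of the word, where the last tensor factor carries no trailing $v$. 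So the induction has to be formulated globally, e.g.\ via the correct local relation $v_{j+1}\,y\,v_{j+2}=E_{j+1}E_{j+2}\,v_j\,y\,v_{j+1}$ for $y\in M_1$, absorbing the surplus $E$'s from the right; until this is written out, part (1) is not proved. In addition, part (1) asserts the statement for both $P_{n,+}\to P_{n+1,+}$ and $P_{n,-}\to P_{n+1,-}$, and you never address the $-$ half: there the shadings are reversed (the cup's inside is shaded exactly when $n$ is odd), the elements have $n+1$ tensor factors, and the comparison runs through $\theta_{n+1}$ and $\theta_{n+2}$, so your parity rule, stated without reference to $\pm$, is wrong for that case.
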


\begin{prop}[Conditional Expectations]
\item[(1)]
The conditional expectation $\theta_{n-1}^{-1}\circ E_{M_{n-1}}\circ \theta_n\colon P_{n,+}\to P_{n-1,+}$ is given by $$d^{-1}\cdot \conditional{}\,.$$
\item[(2)]
The conditional expectation $\theta_{n}^{-1}\circ E_{M_1'}\circ \theta_n\colon P_{n,+}\to P_{n-1,-}$ (see Proposition \ref{sumb}) is given by 
$$d^{-1}\cdot \commutant{}\,.$$
\end{prop}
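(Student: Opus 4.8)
The plan is to prove both identities the same way the multiplication and $*$-structure propositions are proved: evaluate each side on a general $x=x_1\otimes\cdots\otimes x_n\in P_{n,+}$, reduce the diagrammatic side to normal form by reading it through the local moves of Figure~\ref{readtangle}, reduce the algebraic side using the Jones-tower relations, and compare. Throughout I will use that $E_{M_{n-1}}$ is the $\tr_n$-preserving conditional expectation, so that the Markov normalization $\tr_n(xe_{n-1})=d^{-2}\tr_{n-1}(x)$ gives $E_{M_{n-1}}(e_{n-1})=d^{-2}$.

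For part (1) the tangle closes the rightmost through-string of $x$. Reading it through Figure~\ref{readtangle}, the sliding line meets only maxima of the shaded and unshaded types, and the accumulated $d$'s together with the explicit prefactor are checked to produce the uniform normalization $d^{-1}$ in every parity. On the algebraic side I would write $v_{n-1}=d\,e_{n-1}v_{n-2}$, so that $\theta_n(x)=d\,A\,e_{n-1}\,C$ with $A=\theta_{n-1}(x_1\otimes\cdots\otimes x_{n-1})$ and $C=v_{n-2}x_n$ both in $M_{n-1}$. Bimodularity of $E_{M_{n-1}}$ and $E_{M_{n-1}}(e_{n-1})=d^{-2}$ then give $E_{M_{n-1}}(\theta_n(x))=d^{-1}AC$, and the residual product $AC=x_1v_1\cdots v_{n-2}x_{n-1}v_{n-2}x_n$ must be driven back into $\theta_{n-1}$-normal form using the defining relation $e_1we_1=E_{M_0}(w)e_1$ of the Jones projection together with the Temperley--Lieb relations (Lemma~\ref{TL}). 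Managing this collapse, and in particular the two parity cases $n=2k$ and $n=2k+1$ exactly as in Proposition~\ref{multiplication1}, is the heaviest bookkeeping in the argument.

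I would avoid that collapse by identifying the diagrammatic map $\Phi:=\theta_{n-1}\circ(d^{-1}\cdot[\text{right cap}])\circ\theta_n^{-1}$ through the universal property of the conditional expectation. It is automatic that $\Phi$ sends $P_{n,+}$ into $P_{n-1,+}$, since the right-cap tangle is an $(n-1,+)$-tangle, and it is then enough to show $\langle\Phi(x),a\rangle=\langle x,a\rangle$ for all $a\in M_0'\cap M_{n-1}$, the inner product being given by $\tr_n$. This is one isotopy identity: precomposing the cap with the inclusion tangle of $a$ converts the capped arc into a through-string, so both sides are the single diagram that closes $x$ against $a$. Together with $\Phi(x)\in M_0'\cap M_{n-1}$ this characterizes $\Phi$ as the orthogonal projection onto $L^2(M_0'\cap M_{n-1})$, forcing $\Phi=E_{M_{n-1}}$ by uniqueness, with isotopy invariance (Theorem~\ref{rotation}) justifying the diagram manipulations. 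The main obstacle is thus not the conditional expectation itself but verifying this inner-product identity diagrammatically.

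Part (2) is shorter, since Proposition~\ref{sumb} already supplies the closed, basis-independent formula $E_{M_1'}(y)=d^{-2}\sum_{b\in B}byb^*$. Transporting through $\theta_n$ and using that each $b$ acts by left multiplication on the first tensor leg and each $b^*$ by right multiplication on the last, one obtains
\[
\theta_n^{-1}\bigl(E_{M_1'}(\theta_n(x))\bigr)=d^{-2}\sum_{b\in B} bx_1\otimes x_2\otimes\cdots\otimes x_nb^*,
\]
which is $M_1$-central and so lies in $P_{n-1,-}=\theta_n^{-1}(M_1'\cap M_n)$. This is exactly the element read off the left-cap tangle of part (2): the basis-creation move (the second move of Figure~\ref{readtangle}) applied at the left edge contributes one factor $d^{-1}$ and the sum $\sum_b b\,(\cdot)\,b^*$, which combines with the explicit $d^{-1}$ prefactor to give the $d^{-2}$ of Proposition~\ref{sumb}. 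Well-definedness and $M_1$-invariance of the output are already guaranteed by Lemma~\ref{invariant} and Remark~\ref{basisproperties}, exactly as in the remark following Proposition~\ref{sumb}, so here the only thing to verify is the matching of normalizations, which the same universal-property argument again makes routine.
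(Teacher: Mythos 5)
Your treatment of part (2) is correct and essentially complete: transporting Proposition \ref{sumb} through the $M_1$-bimodule map $\theta_n$ gives $\theta_n^{-1}(E_{M_1'}(\theta_n(x)))=d^{-2}\sum_{b}bx_1\otimes x_2\otimes\cdots\otimes x_nb^*$, and reading the left-cap tangle in standard form (one unshaded minimum contributing $d^{-1}\sum_b b\,(\cdot)\,b^*$ by the second move of Figure \ref{readtangle}, then two unshaded maxima multiplying $b$ into $x_1$ and $b^*$ into $x_n$) gives exactly $d$ times that expression. This is surely the intended argument; note the paper itself omits the proof, deferring to \cite{math/9909027}, and Proposition \ref{sumb} was strengthened precisely to make part (2) immediate.

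Part (1), however, is never actually proved by either of your routes, and the second route is circular as written. In Approach A the computation $E_{M_{n-1}}(\theta_n(x))=d^{-1}AC$ is right, but the content of the statement is exactly the identification of $\theta_{n-1}^{-1}(AC)$ with the reading of the cap tangle, namely $x_1\otimes\cdots\otimes x_kx_{k+1}\otimes\cdots\otimes x_n$ when $n=2k$ and $d\,x_1\otimes\cdots\otimes x_kE_{M_0}(x_{k+1})\otimes x_{k+2}\otimes\cdots\otimes x_n$ when $n=2k+1$; you explicitly set this collapse aside, so nothing is proved. In Approach B the uniqueness logic is sound (both $\Phi(x)$ and $E_{M_{n-1}}(x)$ lie in $M_0'\cap M_{n-1}$, and $E_{M_{n-1}}$ is $M_0$-bilinear, so agreement of inner products against $M_0'\cap M_{n-1}$ forces equality), and the isotopy $a^*\cdot T(x)=T(\iota(a^*)x)$ is legitimate since multiplication and inclusion tangles are available. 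But your justification of the remaining step --- that $\langle\Phi(x),a\rangle$ and $\langle x,a\rangle$ are both ``the single diagram that closes $x$ against $a$'' --- presupposes that the traces $\tr_{n-1}$ and $\tr_n$ defining these inner products are computed by closed diagrams. That diagrammatic trace formula is not available at this stage: it is normally \emph{derived} from the identity being proved, since $\tr_n=\tr_{n-1}\circ E_{M_{n-1}}$, so closing all strings computes $d^n\tr_n$ only once one knows that a right cap is $d\,E_{M_{n-1}}$. After your isotopy reduction, what genuinely remains is the scalar identity $d^{-1}\tr_{n-1}(T(y))=\tr_n(y)$ for $y\in P_{n,+}$, and verifying it requires essentially the same Temperley--Lieb/Markov computation you deferred in Approach A (for instance via $\tr_n(\theta_n(y))=d^{-1}\tr_{n-1}(AC)$, which follows from traciality and the Markov property, and then identifying $AC$ with $\theta_{n-1}(T(y))$). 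So the proposal reorganizes the difficulty of part (1) but does not resolve it.
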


\begin{nota} We use the notation from \cite{0912.1320}:
\item[(1)] Denote the annular capping maps $P_{n,+}\to P_{n-1,+}$ by $\alpha_j$ as shown:
$$
\begin{tikzpicture}[annular]
	\clip (0,0) circle (2cm);
	\draw[ultra thick] (0,0) circle (2cm);	
	\node at (0,0)  [empty box] (T) {};
	\node at (95:1.60cm) [right] {$*$};
	\filldraw[shaded] (100:4cm)--(0,0)--(140:4cm);	
	\filldraw[shaded] (220:4cm)--(0,0)--(260:4cm);          
	\filldraw[shaded] (T.-40) .. controls ++(-40:11mm) and ++           (40:11mm) .. (T.40);
	\draw[ultra thick] (0,0) circle (2cm);
	\node at (0,0)  [empty box] (T) {};
	\node at (180:1cm) {$\cdot$};
	\node at (160:1cm) {$\cdot$};
	\node at (200:1cm) {$\cdot$};
	\node at (T.70) [above] {$*$};		
\end{tikzpicture}\hs,\hs
\begin{tikzpicture}[annular]
	\clip (0,0) circle (2cm);
	\draw[ultra thick] (0,0) circle (2cm);	
	\node at (0,0)  [empty box] (T) {};
	\node at (95:1.60cm) [right] {$*$};
	\filldraw[shaded] (100:4cm)--(0,0)--(140:4cm);	
	\filldraw[shaded] (220:4cm)--(0,0)--(260:4cm);             
	\filldraw[shaded] (0,0) -- (60:3cm) -- (0:4cm) -- (-60:3cm) -- (0,0) .. controls ++(-30:2cm) and ++(30:2cm) .. (0,0);		
	\draw[ultra thick] (0,0) circle (2cm);
	\node at (0,0)  [empty box] (T) {};
	\node at (180:1cm) {$\cdot$};
	\node at (160:1cm) {$\cdot$};
	\node at (200:1cm) {$\cdot$};
	\node at (T.70) [above] {$*$};		
\end{tikzpicture}\hs,\cdots,\hs
\begin{tikzpicture}[annular]
	\clip (0,0) circle (2cm);
	\draw[ultra thick] (0,0) circle (2cm);	
	\node at (0,0)  [empty box] (T) {};
	\node at (180:1.90cm) [right] {$*$};
	\filldraw[shaded] (310:4cm)--(0,0)--(350:4cm);	
	\filldraw[shaded] (190:4cm)--(0,0)--(230:4cm);            
	\filldraw[shaded] (0,0) -- (30:3cm) -- (90:4cm) -- (150:3cm) -- (0,0) .. controls ++(120:2cm) and ++(60:2cm) .. (0,0);	
	\draw[ultra thick] (0,0) circle (2cm);
	\node at (0,0)  [empty box] (T) {};
	\node at (270:1cm) {$\cdot$};
	\node at (290:1cm) {$\cdot$};
	\node at (250:1cm) {$\cdot$};
	\node at (T.90) [above] {$*$};	
\end{tikzpicture}\hs,
$$
i.e., the $i\thh$ and $(i+1)\thh$ (modulo $2n$) internal boundary points are joined by a string and all other internal boundary points are connected to external boundary points such that
\be
\item[(i)] If $i=1$, then the first external point is connected to the third internal point.
\item[(ii)] If $1<i<2n$, then the first external point is connected to the first internal point.
\item[(iii)] If $i=2n$, then the first external point is connected to the $(2n-1)\thh$ internal point.
\ee
\item[(2)] Denote the annular cupping maps $P_{n-1,+}\to P_{n,+}$ by $\beta_j$ as shown:
$$
\begin{tikzpicture}[annular]
	\clip (0,0) circle (2cm);
	\draw[ultra thick] (0,0) circle (2cm);	
	\node at (0,0)  [empty box] (T) {};
	\node at (135:1.80cm) [right] {$*$};
	\filldraw[shaded] (-20:4cm)--(0,0)--(20:4cm);	
	\filldraw[shaded] (160:4cm)--(0,0)--(200:4cm);
	\filldraw[shaded] (250:4cm)--(0,0)--(290:4cm);		             
	\filldraw[shaded] (70:2cm) .. controls ++(250:7mm) and ++(290:7mm) ..         (110:2cm) -- (110:3cm);
	\draw[ultra thick] (0,0) circle (2cm);
	\node at (0,0)  [empty box] (T) {};
	\node at (-135:1cm) {$\cdot$};
	\node at (-145:1cm) {$\cdot$};
	\node at (-125:1cm) {$\cdot$};
	\node at (T.90) [above] {$*$};		
\end{tikzpicture}\hs,\hs
\begin{tikzpicture}[annular]
	\clip (0,0) circle (2cm);
	\draw[ultra thick] (0,0) circle (2cm);	
	\node at (0,0)  [empty box] (T) {};
	\node at (120:1.80cm) [right] {$*$};
	\filldraw[shaded] (160:4cm)--(0,0)--(200:4cm);
	\filldraw[shaded] (250:4cm)--(0,0)--(290:4cm);             
	\filldraw[shaded] (25:2cm) .. controls ++(205:7mm) and ++(245:7mm) .. (65:2cm) -- (65:3cm)--(90:3cm) -- (90:2cm) --(0,0) -- (0:2cm) -- (0:3cm);
	\draw[ultra thick] (0,0) circle (2cm);
	\node at (0,0)  [empty box] (T) {};
	\node at (-135:1cm) {$\cdot$};
	\node at (-145:1cm) {$\cdot$};
	\node at (-125:1cm) {$\cdot$};
	\node at (T.120) [above] {$*$};		
\end{tikzpicture}\hs,\cdots,\hs
\begin{tikzpicture}[annular]
	\clip (0,0) circle (2cm);
	\draw[ultra thick] (0,0) circle (2cm);	
	\node at (0,0)  [empty box] (T) {};
	\node at (135:1.90cm) [right] {$*$};
	\filldraw[shaded] (-20:4cm)--(0,0)--(20:4cm);
	\filldraw[shaded] (250:4cm)--(0,0)--(290:4cm);           
	\filldraw[shaded] (115:2cm) .. controls ++(295:7mm) and ++(335:7mm) .. (155:2cm) -- (155:3cm)--(180:3cm) -- (180:2cm) --(0,0) -- (90:2cm) -- (90:3cm);		
	\draw[ultra thick] (0,0) circle (2cm);
	\node at (0,0)  [empty box] (T) {};
	\node at (315:1cm) {$\cdot$};
	\node at (305:1cm) {$\cdot$};
	\node at (325:1cm) {$\cdot$};
	\node at (T.-155)[below] {$*$};	
\end{tikzpicture}\hs,
$$
i.e., $\beta_j$ is $\alpha_j$ turned inside out. 
\end{nota}

The following lemma is similar to a result in \cite{MR2047470}:
\begin{lem}\label{technical}
Suppose $P_\bullet$ is a planar $*$-algebra with modulus $d\neq 0$ and $Q_{n,\pm}\subset P_{n,\pm}$ are $*$-subalgebras which are closed under the following operations:
\item[(1)] left and right multiplication by tangles $E_n=\D \jonesprojection\in P_{n+1,+}$ for $n\in\N$;
\item[(2)] The maps from $P_{n,+}$ as follows:
\begin{align*}
\alpha_n&=\conditional{}\,\colon P_{n,+}\to P_{n-1,+},
\hspace{.15in}\beta_{n+1}=\inclusion{}\,\colon P_{n,+}\to P_{n+1,+},\\
\gamma_n^+&=\commutant{}\,\colon P_{n,+}\to P_{n-1,-};\hsp{and}
\end{align*}
\item[(3)] the map $i_n^-=\minusinclusion{}\,\colon P_{n,-}\to P_{n+1,+}$.
\item
Then the $Q_{n,\pm}$ define a planar $*$-subalgebra $Q_\bullet\subset P_\bullet$.
\end{lem}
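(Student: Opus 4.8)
The plan is to show that the collection $\{Q_{n,\pm}\}$ is closed under the action of an arbitrary planar tangle, using the fact that every planar tangle can be generated (up to isotopy) by a small set of elementary tangles, and that the operations listed in hypotheses (1)--(3) are enough to produce all these generators. Since $Q_\bullet$ is already assumed to be a collection of $*$-subalgebras, multiplication and $*$ are handled. What remains is to realize the remaining generating tangles --- the identity, the Jones projections, the unit (inclusion of the empty diagram), rotations, and the various capping/cupping maps on \emph{both} shadings --- as compositions of the given operations $\alpha_n, \beta_{n+1}, \gamma_n^+, i_n^-$ together with multiplication by the $E_n$ and the $*$-structure. The key observation is that the listed maps are stated only on the positively-shaded spaces $P_{n,+}$ (plus the single map $i_n^-$ from the negative side into the positive side), so the main work is to \emph{transport} the negatively-shaded operations to the positive side via $i_n^-$ and recover them by capping.

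\textbf{First I would} set up the standard generating set for the planar operad. By the results of \cite{math/9909027} and the annular-category description in \cite{0912.1320}, a planar $*$-algebra structure is closed under all tangles as soon as the relevant subspaces are closed under: (a) multiplication and adjoint; (b) the annular Temperley--Lieb category, i.e.\ the capping maps $\alpha_j$, the cupping maps $\beta_j$, and the rotation $\rho$; and (c) the inclusions between adjacent shadings. Closure under (a) is the hypothesis that each $Q_{n,\pm}$ is a $*$-subalgebra. For (b), I would first note that all the capping maps $\alpha_j$ (for varying $j$) are obtained from the single map $\alpha_n=\conditional{}$ by pre- and post-composing with the rotation, and likewise all $\beta_j$ from $\beta_{n+1}=\inclusion{}$; so it suffices to produce the rotation and one capping and one cupping on each shading. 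The crucial point is that the rotation $\rho$ on $P_{n,+}$ can be written as a composition of a cupping, a multiplication by a Temperley--Lieb element, and a capping --- the standard identity expressing the rotation tangle as a ``conditional-expectation then include'' sandwich --- all of which live among the listed positive-side operations once we also multiply by the $E_n$'s.

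\textbf{The negative-shading operations} are where the argument requires care, and I expect this to be \emph{the main obstacle}. We are given the capping $\gamma_n^+\colon P_{n,+}\to P_{n-1,-}$ landing on the negative side and the inclusion $i_n^-\colon P_{n,-}\to P_{n+1,+}$ going back to the positive side, but we are \emph{not} given directly the negatively-shaded capping, cupping, or rotation maps $P_{n,-}\to P_{n-1,-}$, etc. The plan is to obtain these by conjugating the known positive-side maps through the pair $(\gamma^+, i^-)$: given $x\in Q_{n,-}$, push it into $Q_{n+1,+}$ by $i_n^-$, apply the appropriate positive-side tangle (a capping/cupping/rotation supplied by step (b)), and then return to the negative side by a $\gamma^+$ map. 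Concretely, I would check diagrammatically that for each negative-shading generator there is such a factorization through the positive side, so that closure of the $Q_{n,-}$ under the negative operations follows from closure of the $Q_{m,+}$ under the already-established positive ones. One must verify that the shadings match at each intermediate stage and that the composite tangle is isotopic to the desired negative generator --- this is a finite diagrammatic bookkeeping, but it is the heart of the lemma.

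\textbf{Finally I would} assemble these pieces: having shown $\{Q_{n,\pm}\}$ is closed under a generating set of tangles closed under composition, invoke the operadic generation result to conclude that $Q_\bullet$ is closed under every planar $*$-tangle, hence is a planar $*$-subalgebra of $P_\bullet$. The nontrivial content is entirely in realizing the generators (especially the rotation and the negative-shading maps) from the hypotheses; once that is done the conclusion is formal. I would expect the write-up to proceed by listing each generating tangle, exhibiting it as an explicit word in $\alpha_n,\beta_{n+1},\gamma_n^+, i_n^-$, multiplication by $E_n$, and $*$, with a supporting picture for the rotation and for the $(\gamma^+, i^-)$-conjugation trick.
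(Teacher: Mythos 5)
Your overall reduction is the same as the paper's: since the $Q_{n,\pm}$ are $*$-subalgebras, it suffices to show closure under all annular maps, and for that it suffices to produce all caps $\alpha_j$, all cups $\beta_j$, and the rotations. Your idea of transporting operations to the negative shading through the pair $(\gamma^+,i^-)$ is also in the spirit of the paper. But there is a genuine gap at the step you call the ``crucial point'': the rotation on $P_{n,+}$ is \emph{not} expressible as a composition of a cupping, multiplication by Temperley--Lieb elements, and a capping. Every operation you allow at that stage --- multiplication, $*$, multiplication by the $E_n$'s, $\alpha_n$, $\beta_{n+1}$, and anything composed from them --- is given by a tangle in which the distinguished ($*$) regions of all input disks and of the output disk lie in one and the same region of the tangle, and this property is preserved under operadic composition and under taking mirror images. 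The rotation tangle fails it: the string wrapping around the left side separates the input's $*$-region from the output's. Hence the rotation does not lie in the sub-operad generated by these ``non-wrap-around'' operations, and no diagrammatic bookkeeping will produce it from them. The only hypotheses whose tangles have a string passing around the left of the box are $\gamma^+_n$ and $i^-_n$, so these must be used to build the rotation on the \emph{positive} side as well --- not only for the negative-shading operations, as in your outline. This also makes your proposed order circular: you derive the general $\alpha_j$ from the rotation, and the rotation from cups, caps and Temperley--Lieb multiplication, but the wrap-around cap $\alpha_{2n}$ (equivalently the rotation) is exactly what cannot be reached that way.

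The paper proceeds in the opposite order. It first builds the auxiliary wrap-around maps $\gamma^-_n$ and $i^+_n$ from $\gamma^+$, $i^-$, $\alpha$, $\beta$ and words in the $E_j$'s; it then obtains the caps $\alpha_j$ for $j\neq 2n$ with no rotation at all, by formulas of the shape $\alpha_j(x)=\tfrac{1}{d}\,\alpha_n\alpha_{n+1}\bigl((E_nE_{n-1}\cdots E_j)\cdot\beta_{n+1}(x)\cdot E_n\bigr)$, and the wrap-around cap as $\alpha_{2n}(x)=\alpha_{2n-1}\bigl(i^-_{n-1}(\gamma^+_n(x))\bigr)$; the cups are handled similarly, with $\beta_{2n+2}=\alpha_2\gamma^-_{n+1}\gamma^+_n$; and only at the very end are the two rotations by one produced, e.g.
$$
\rho(x)=\frac{1}{d}\,\gamma^+_{n+1}\,\alpha_{2n+2}\,i^-_{n+1}\,i^+_{n}\,\alpha_n\,\beta_{n+1}(x).
$$
If you replace your rotation step by this construction (caps and cups first, rotation last, with $\gamma^+$ and $i^-$ doing the wrap-around work), the rest of your outline goes through.
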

\begin{proof} As $Q_{n,\pm}$ is closed under multiplication and $*$, it suffices to show $Q_\bullet$ is closed under all annular maps. To show this, it suffices to show all $\alpha_j$'s, all $\beta_j$'s, and both rotations by $1$ preserve $Q_\bullet$. 

First, note that the maps $\gamma^-_n\colon P_{n,-}\to P_{n-1,+}$ and $i_n^+\colon P_{n,+}\to P_{n+1,-}$  given by
\begin{align*}
\gamma^-_n(x)&=\minuscommutant{x}=\frac{1}{d} \alpha_{n+2}(E_n E_{n-1}\cdots E_1 \cdot \beta_{n+2}(i^-_nx))\cdot E_1 E_2\cdots E_n)\hsp{and}\\
i_n^+(x)&=\commutantinclusion{x}=\gamma_{n+2}^+( (E_1 E_2\cdots E_n) \cdot \beta_{n+2}\beta_{n+1}(x) \cdot (E_{n+1} E_n\cdots E_1))
\end{align*}
preserve $Q_\bullet$. 

We show all $\alpha_j$'s preserve $Q_\bullet$. For $j<n$ and $x\in Q_n$, 
$$
\alpha_j(x) = \frac{1}{d} \alpha_{n}\alpha_{n+1}( (E_n E_{n-1}\cdots E_j)\cdot \beta_{n+1}(x) \cdot (E_{n}))).
$$
The case $n<j<2n$ is similar. It is clear $\alpha_{2n}(x)=\alpha_{2n-1}(i^-_{n-1}(\gamma^+_n(x)))$. 

We show all $\beta_j$'s preserve $Q_\bullet$. If $j<n+1$, we have
$$
\beta_j(x) = (E_{j} E_{j-1}\cdots E_n)\cdot \beta_{n+1}(x).
$$
The case $n+1<j<2n+2$ is similar. It is clear $\beta_{2n+2}(x)=\alpha_{2}\gamma^-_{n+1}\gamma^+_n(x)$. 

We show both rotations by $1$ preserve $Q_\bullet$. We have
\begin{align*}
\rotationbyone{x}&=\frac{1}{d}\gamma^+_{n+1}\alpha_{2n+2} i^-_{n+1} i^+_{n} \alpha_n \beta_{n+1}(x)\hsp{and}\\
\minusrotationbyone{x}&=\alpha_{n+1}\beta_{n+2}\alpha_{2n+1} i^-_n(x).
\end{align*}
\end{proof}

\begin{thm}\label{canonical}
Given a strongly Markov inclusion $M_0\subset (M_1,\tr_1)$, there is a unique planar $*$-algebra $P_\bullet$ of modulus $d=[M_1\colon M_0]^{1/2}$ where 
$$
P_{n,+}=\theta^{-1}_n(M_0'\cap M_{n})\hsp{and}P_{n,-}=\theta^{-1}_{n+1}(M_1'\cap M_{n+1})
$$
such that the multiplication is given by Remark \ref{multiplication2},
\item[(0)] for all tangles $T$ with $n$ input disks, $T(\xi_1^*\otimes\cdots \otimes \xi_n^*)=T^*(\xi_1\otimes\cdots\otimes \xi_n)^*$ where  for $\xi_i\in P_{n_i,\pm_i}$, $\xi^*_i$ is as in Proposition \ref{starstructure} and $T^*$ is the mirror image of $T$;
\item[(1)] for $n\in\N$, 
$\D E_n=\jonesprojection\in P_{n+1,+}$;
\item[(2)]
for $x\in P_{n,+}$ and $B$ a Pimsner-Popa basis for $M_1$ over $M_0$,
\begin{align*} 
\conditional{x}&=dE_{M_{n-1}}(x),\hspace{1in}\inclusion{x}=x\in P_{n+1,+},\hsp{and}\\
\commutant{x}&=dE_{M'_1}(x)=d^{-1}\sum\limits_{b\in B} bxb^*;\hsp{and}
\end{align*}
\item[(3)] for $x\in P_{n,-}$, $\D\minusinclusion{x}=x\in P_{n+1,+}$.
\end{thm}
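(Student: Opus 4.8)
The plan is to treat existence and uniqueness separately. The action of planar tangles on the spaces $P_{n,\pm}$ has already been constructed in Subsection \ref{padefn} by sweeping a horizontal line through a tangle in standard form, so existence amounts to checking that this recipe genuinely defines an action of the planar operad; uniqueness will then follow by showing that conditions (0)--(3) together with the multiplication of Remark \ref{multiplication2} pin down the action on a generating set of tangles. For the well-definedness half of existence, I would first observe that every local move in Figure \ref{readtangle} is an $M_1$-$M_1$ bimodule map, except when the capping move is applied to the left- or right-most shaded region, where it is only an $M_0$-$M_0$ bimodule map; as noted in Subsection \ref{padefn}, this exceptional case occurs precisely when $i=0$, i.e.\ the external distinguished interval is unshaded, so in every case the reading preserves the relevant $M_i$-invariance (via Lemma \ref{invariant} for the input disks) and lands in $P_{k,\pm}$. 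Independence of the Pimsner--Popa basis $B$ holds because $B$ enters the reading only through the move introducing a new unshaded region, whose weight $d^{-1}\sum_{b\in B} b\otimes b^*$ is basis-independent by Remark \ref{basisproperties}.

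The crux of existence is isotopy invariance. Having fixed standard form, invariance under arbitrary planar isotopy reduces to a finite list of local moves: cancellation of an adjacent maximum--minimum pair, sliding an extremum or an input disk vertically past another, and invariance of the reading under the one-click rotation of the external disk. The first two families are routine identities coming from the Temperley--Lieb relations and the properties of $E_{M_0}$, and each is a bimodule-map computation. The rotational move carries the genuine content, and this is exactly what the treatment of $\rho$ and $\sigma$ supplies: Theorem \ref{rotation} identifies $\rho$ on $P_{n,+}$ with the cyclic shift $y_1\otimes\cdots\otimes y_n\mapsto y_2\otimes\cdots\otimes y_n\otimes y_1$, Theorem \ref{minusrotation} gives the analogue for $\sigma$ on $P_{n,-}$, and the corollaries $\rho^n=\id$ and $\sigma^n=\id$ guarantee that the reading is unchanged after the starred interval is rotated all the way around. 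Granting these, the horizontal-line reading is isotopy-invariant and compatible with gluing (which is immediate from the locality of the sweep), so $P_\bullet$ is a planar $*$-algebra. Conditions (0)--(3) then hold because they merely restate formulas already computed for this structure; condition (0) in particular follows from Proposition \ref{starstructure} and the reflection symmetry of the sweep.

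For uniqueness, suppose $P_\bullet'$ is any planar $*$-algebra on the same spaces satisfying (0)--(3) and the multiplication formula. Since a planar-algebra action respects operadic composition and adjoints, it suffices to show the distinguished tangles in (0)--(3) generate the whole planar operad. This is the generation fact embodied in Lemma \ref{technical}: its proof writes every annular capping map $\alpha_j$, every cupping map $\beta_j$, and both one-click rotations as composites of multiplication with the distinguished operations $E_n$, $\alpha_n$, $\beta_{n+1}$, $\gamma_n^+$, and $i_n^-$. Building a general multi-input tangle by inserting its input disks one at a time and then applying these annular generators, every tangle becomes a composite of generators whose action is fixed by (0)--(3); hence $P_\bullet'=P_\bullet$.

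The main obstacle is the rotational isotopy invariance in the existence half. Every other move in the reduction is a local bimodule identity, whereas invariance under rotating the distinguished interval is the one genuinely global constraint; it is precisely what Burns' computation for $\rho$ and its $\sigma$-analogue establish, and it is what upgrades the construction from a planar operad action defined only up to rotation into an honest planar $*$-algebra.
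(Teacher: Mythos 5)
Your proposal is correct and takes essentially the same route as the paper: the paper's two-sentence proof derives existence from the canonical relative commutant planar $*$-algebra construction of Subsection \ref{padefn} (whose isotopy invariance rests on the rotation results, Theorems \ref{rotation} and \ref{minusrotation}), and uniqueness from the generation argument of Lemma \ref{technical}, exactly as you do. Your write-up simply makes explicit the details that the paper leaves compressed.
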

\begin{proof}
Uniqueness follows from Lemma \ref{technical}. Existence follows from the existence of the canonical relative commutant planar $*$-algebra.
\end{proof}

\section{The planar algebra isomorphism for finite dimensional C$^*$-algebras}\label{sec:iso}

We now restrict our attention to a connected unital inclusion $M_0\subset M_1$ of finite dimensional C$^*$-algebras with the Markov trace. We will show that in this case, the canonical planar algebra of Theorem \ref{canonical} is isomorphic to the bipartite graph planar algebra \cite{MR1865703} of the Bratteli diagram.

\subsection{Loop algebras}\label{loop}
We define loop algebras in the spirit of \cite{MR1865703} which are another description of Ocneanu and Sunder's path algebras \cite{MR999799},\cite{MR1473221} with a more GNS (rather than spatial) flavor. 

\begin{nota} For this section, let $\Gamma$ be a finite, connected, bipartite multi-graph. Let $\V_\pm$ denote the set of even/odd vertices of $\Gamma$, and let $\E$ denote the edge set of $\Gamma$. Usually we will denote edges by $\varepsilon$ and $\xi$. All edges will be directed from even to odd vertices, so we have source and target functions $s\colon \E\to \V_+$ and $t\colon \E\to \V_-$. We will write $\varepsilon^*$ to denote an edge $\varepsilon$ traversed from an odd vertex to an even vertex, and we define source and target functions $s\colon \E^*=\set{\varepsilon^*}{\varepsilon\in\E}\to \V_-$ and $t\colon \E^*\to \V_+$ by $s(\varepsilon^*)=t(\varepsilon)$ and $t(\varepsilon^*)=s(\varepsilon)$. Let $m_+\colon \V_+\to \N$ be a dimension (row) vector for the even vertices. For $v\in \V_-$, define the dimension (row) vector for the odd vertices by
$$
m_-(v)=\sum\limits_{t(\varepsilon)=v} m_+(s(\varepsilon)).
$$
Let $\Lambda$ be the bipartite adjacency matrix for $\Gamma$ ($\Lambda_{i,j}$ is the number of times the $i\thh$ vertex in $\V_+$ is connected to the $j\thh$ vertex in $\V_-$).
\end{nota}

\begin{rem}
Given $(\Gamma,m_+)$, we can associate a connected unital inclusion of finite dimensional C$^*$-algebras $M_0\subset M_1$. We set
$$
M_0=\bigoplus\limits_{v\in\V_+} M_{m_+(v)}(\C)\hsp{and} M_1=\bigoplus\limits_{v\in \V_-} M_{m_-}(\C),
$$
and the inclusion is such that $\Gamma$ is the Bratteli diagram for the inclusion, and $\Lambda$ is the inclusion matrix ($\Lambda_{i,j}$ is the number of times the $i\thh$ simple summand of $M_0$ is contained in the $j\thh$ simple summand of $M_1$). Conversely, given such an inclusion, we get a finite, connected, bipartite multi-graph (the Bratteli diagram) and a dimension vector $m_+$ (corresponding to the simple summands of $M_0$). 
\end{rem}

\begin{defn}
Let $G_{0,\pm}$ be the complex vector space with basis $\V_\pm$ respectively. For $n\in\N$, $G_{n,\pm}$ will denote the complex vector space with basis loops of length $2n$ on $\Gamma$ based at a vertex in $\V_\pm$ respectively. 
\end{defn}

We discuss the vector spaces $G_{n,+}$. The spaces $G_{n,-}$ are similar, and it is clear what the corresponding notation should be and how they will behave.

\begin{nota}\label{paths}
Loops in $G_{n,+}$ will be denoted $[\varepsilon_1\varepsilon_2^*\cdots \varepsilon_{2n-1}\varepsilon_{2n}^*]$. Any time we write such a loop, it is implied that 
\be
\item[(i)]
$t(\varepsilon_i)=s(\varepsilon_{i+1}^*)=t(\varepsilon_{i+1})$ for all odd $i<2n$,
\item[(ii)]
$t(\varepsilon_{i}^*)=s(\varepsilon_i)=s(\varepsilon_{i+1})$ for all even $i<2n$, and
\item[(iii)]
$t(\varepsilon_{2n}^*)=s(\varepsilon_{2n})=s(\varepsilon_1)$.
\ee
For a loop $\ell= [\varepsilon_1\varepsilon_2^*\cdots \varepsilon_{2n-1}\varepsilon_{2n}^*]\in G_{n,+}$, we define the following paths in $\ell$:
\begin{align*}
\ell_{[1,k]}&=
\begin{cases}
\varepsilon_1\varepsilon_2^*\cdots \varepsilon_{k-1}\varepsilon_k^* &\hsp{$k$ even}\\
\varepsilon_1\varepsilon_2^*\cdots \varepsilon_{k-1}^*\varepsilon_k &\hsp{$k$ odd}
\end{cases}\\
\ell_{[k,2n]}&=
\begin{cases}
\varepsilon_{k}\varepsilon_{k+1}^*\cdots \varepsilon_{2n-1}\varepsilon_{2n}^* &\hsp{$k$ odd}\\
\varepsilon_{k}^*\varepsilon_{k+1}\cdots \varepsilon_{2n-1}\varepsilon_{2n}^* &\hsp{$k$ even}
\end{cases}
\end{align*}
where $1\leq k\leq 2n$. Similarly, we may define the path $\gamma_{[j,k]}(\ell)$ in $\ell$ to be the $j\thh-k\thh$ entries of $\ell$ for $1\leq j\leq k\leq 2n$.
\end{nota}

\begin{defn}\label{adjoint}
Define an antilinear map $*$ on $G_{n,+}$ by the antilinear extension of the map
$$
[\varepsilon_1\varepsilon_2^*\cdots \varepsilon_{2n-1}\varepsilon_{2n}^*]^*=[\varepsilon_{2n}\varepsilon_{2n-1}^*\cdots \varepsilon_{2}\varepsilon_{1}^*].
$$
There is also an obvious notion of taking $*$ of a path $\gamma_{[k,l]}(\ell)$ for a loop $\ell\in G_{n,+}$. We define a multiplication on $G_{n,+}$ by
$$
\ell_1\cdot \ell_2 = \delta_{(\ell_1)_{[n+1,2n]}^*,(\ell_2)_{[1,n]}} [(\ell_1)_{[1,n]}(\ell_2)_{[n+1,2n]}].
$$
It is clear that $*$ is an involution for $G_{n,+}$ under this multiplication.
\end{defn}

\begin{rem}
We can think of a loop in $G_{n,+}$ as a path up and down the multi-graph $\Gamma_n$ corresponding to the Bratteli diagram for the inclusions
$$
M_0\subset M_1\subset \cdots \subset M_{n},
$$
which is obtained by reflecting $\Gamma$ a total of $n-1$ times, as the inclusion matrix of $M_j\subset M_{j+1}$ is given by $\Lambda$ or $\Lambda^T$ if $j$ is even or odd, respectively \cite{MR696688}.
\end{rem}

\begin{defn}
Let $\widetilde{\Gamma}$ be the augmentation of the bipartite graph $\Gamma$ by adding a distinguished vertex $\star$ which is connected to each $v\in \V_+$ by $m_+(v)$ distinct edges. These edges are oriented so they begin at $\star$. We will denote these added edges by $\eta's$ (and $\zeta$'s and $\kappa$'s when necessary). 
\end{defn}

\begin{defn}
For $n\in\Z_{\geq 0}$, let $A_n$ be the algebra defined as follows: a basis of $A_n$ will consist of loops of length $2n+2$ on $\widetilde{\Gamma}$ of the form
$$
[\eta_1 \varepsilon_1\varepsilon_2^*\cdots \varepsilon_{2n-1}\varepsilon_{2n}^* \eta_2^*]
$$
i.e., the loops start and end at $\star$, but remain in $\Gamma$ otherwise. Note that we have an obvious $*$-structure on each $A_n$. Multiplication will be given as follows: if one defines the similar path notation as in Notation \ref{paths}, then we have
$$
\ell_1\cdot \ell_2 = \delta_{(\ell_1)_{[n+2,2n+2]}^*,(\ell_2)_{[1,n+1]}} [(\ell_1)_{[1,n+1]}(\ell_2)_{[n+2,2n+2]}].
$$
\end{defn}

\begin{rem}
We can think of a loop in $A_n$ as a path up and down the multi-graph $\widetilde{\Gamma}_n$ corresponding to the Bratteli diagram for the inclusions
$$
\C\subset M_0\subset M_1\subset \cdots \subset M_{n}.
$$
\end{rem}

\begin{defn}[Inclusions]\label{inclusions}
The inclusion $A_n\to A_{n+1}$ is given by the linear extension of
$$
[\eta_1 \varepsilon_1\varepsilon_2^*\cdots \varepsilon_{2n-1}\varepsilon_{2n}^* \eta_2^*]\longmapsto 
\begin{cases}\D
\sum\limits_{s(\varepsilon)=s(\varepsilon_n)} [\eta_1 \varepsilon_1\varepsilon_2^*\cdots\varepsilon_n^*\varepsilon\varepsilon^*\varepsilon_{n+1} \cdots\varepsilon_{2n-1}\varepsilon_{2n}^* \eta_2^*] & \hsp{$n$ even} \\ \D
\sum\limits_{s(\varepsilon)=t(\varepsilon_n)} [\eta_1 \varepsilon_1\varepsilon_2^*\cdots \varepsilon_n\varepsilon^*\varepsilon\varepsilon_{n+1}^*\cdots\varepsilon_{2n-1}\varepsilon_{2n}^* \eta_2^*] & \hsp{$n$ odd.} 
\end{cases}
$$
We identify $A_n$ with its image in $A_{n+1}$.
\end{defn}

\begin{rem}
The inclusion identifications allow us to define a multiplication $A_m\times A_n\to A_{\max\{m,n\}}$ by including $A_m,A_n$ into $A_{\max\{m,n\}}$ and using the regular multiplication. Explicitly, if $\ell_1\in A_m$ and $\ell_2\in A_n$ with $m\leq n$, then
$$
\ell_1\cdot \ell_2 = \delta_{(\ell_1)_{[m+2,2m+2]},(\ell_2)_{[1,m+1]}} [(\ell_1)_{[1,m+1]} (\ell_2)_{[m+2,2n+2]}].
$$
The case $m\geq n$ is similar. 
\end{rem}

\subsection{Towers of loop algebras}\label{towerofloops}
We provide an isomorphism of the tower $(M_n)_{n\geq 0}$ coming from a connected unital inclusion of finite dimensional C$^*$-algebras with the Markov trace and the corresponding tower $(A_n)_{n\geq 0}$ of loop algebras. Assume the notation of Subsection \ref{loop}.

\begin{prop}
For $n\geq 0$, if $S_i$ is the $i\thh$ simple summand of of $M_n$, then loops $\ell$ in $A_n$ for which $\ell_{[1,n+1]}$ ends at the corresponding vertex of $\widetilde{\Gamma}_n$ form a system of matrix units for a simple algebra isomorphic to $S_i$.
\end{prop}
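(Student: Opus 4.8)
The plan is to realize the prescribed loops as the standard matrix units of $M_{d_i}(\C)$, where $d_i = \dim S_i$ is the entry of the dimension vector at level $n$ corresponding to $S_i$. Let $w$ be the vertex of $\widetilde{\Gamma}_n$ at level $n$ labelling $S_i$, and let $\mathcal{P}_w$ denote the set of paths in $\widetilde{\Gamma}_n$ of length $n+1$ from $\star$ to $w$.

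First I would record the counting fact that $|\mathcal{P}_w| = d_i$. This is the classical statement that the dimension of a simple summand equals the number of Bratteli paths from the root, and it is forced by the setup: $\star$ is joined to each $v \in \V_+$ by $m_+(v)$ edges, so there are $m_+(v)$ paths $\star \to v$, and the recursion $m_-(v) = \sum_{t(\varepsilon)=v} m_+(s(\varepsilon))$ (together with its reflected versions going up the tower) says exactly that the path count propagates like the dimension vector. A short induction on the level then gives $|\mathcal{P}_w| = d_i$.

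Next I would set up the bijection between the prescribed loops and $\mathcal{P}_w \times \mathcal{P}_w$. A loop $\ell \in A_n$ with $\ell_{[1,n+1]}$ ending at $w$ splits into its ascending half $p := \ell_{[1,n+1]}$, which is a path $\star \to w$, and its descending half $\ell_{[n+2,2n+2]}$; reversing the latter gives a second path $q := (\ell_{[n+2,2n+2]})^* \in \mathcal{P}_w$, since the loop must return to $\star$ through $w$. Writing $e_{p,q}$ for the loop reassembled from $p$ and the reverse of $q$, the map $\ell \mapsto (p,q)$ is a bijection onto $\mathcal{P}_w \times \mathcal{P}_w$. I would then verify the matrix-unit relations directly: the multiplication formula in $A_n$ gives $e_{p,q}\cdot e_{p',q'} = \delta_{q,p'}\, e_{p,q'}$, because $(e_{p,q})_{[n+2,2n+2]}^* = q$ and $(e_{p',q'})_{[1,n+1]} = p'$, so the Kronecker delta fires exactly when $q = p'$ and then the surviving loop is $[p\,(q')^*] = e_{p,q'}$. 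The $*$-structure of $A_n$ reverses loops, whence $e_{p,q}^* = e_{q,p}$. As the $e_{p,q}$ are distinct basis loops of $A_n$ they are linearly independent, and these are precisely the relations defining a system of matrix units for $M_{d_i}(\C)$; combined with $|\mathcal{P}_w| = d_i$, the span of $\{e_{p,q}\}_{p,q\in\mathcal{P}_w}$ is a simple $*$-subalgebra isomorphic to $M_{d_i}(\C) \cong S_i$.

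I expect the main obstacle to be not conceptual depth but the careful bookkeeping of the alternating edge directions ($\varepsilon$ versus $\varepsilon^*$, even versus odd levels) needed to confirm that both halves of a loop are genuine paths to the common vertex $w$ and that reversal interacts correctly with the multiplication and $*$; the only substantive external input is the path-counting identity, which the dimension-vector recursion renders routine.
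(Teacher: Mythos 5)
Your proof is correct. The paper in fact states this proposition without proof, treating it as a standard fact about path (loop) algebras in the style of Ocneanu--Sunder, and your argument --- splitting each loop through $w$ into its ascending half $p$ and reversed descending half $q$, checking $e_{p,q}\cdot e_{p',q'}=\delta_{q,p'}e_{p,q'}$ and $e_{p,q}^*=e_{q,p}$ directly from the defining multiplication and involution on $A_n$, and matching $|\mathcal{P}_w|$ with $\dim$ of the simple summand via the dimension-vector recursion along the reflected Bratteli diagram --- is precisely the standard argument that the paper leaves implicit.
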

\begin{cor}\label{iso}
For $n\in\Z_{\geq 0}$, there is an algebra $*$-isomorphism $A_n\cong M_n$.
\end{cor}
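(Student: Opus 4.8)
The plan is to deduce the corollary directly from the preceding Proposition by assembling the block-wise matrix unit systems into a single $*$-isomorphism. First I would recall that, since $M_n$ is a finite dimensional C$^*$-algebra, it splits as a direct sum $M_n=\bigoplus_i S_i$ of its simple summands, and that these summands are in bijection with the top-level vertices of $\widetilde{\Gamma}_n$ (the level labelling $M_n$ in the Bratteli diagram $\C\subset M_0\subset\cdots\subset M_n$). Each such vertex $v_i$ carries a dimension equal to the number of paths in $\widetilde{\Gamma}_n$ from $\star$ to $v_i$, and this number is exactly $d_i$, where $S_i\cong M_{d_i}(\C)$.

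Next I would partition the basis loops of $A_n$ according to the vertex at which $\ell_{[1,n+1]}$ ends --- call it the apex of $\ell$ --- and let $A_n^{(i)}\subset A_n$ denote the span of those loops with apex $v_i$. The crucial observation is that the multiplication $\ell_1\cdot\ell_2=\delta_{(\ell_1)_{[n+2,2n+2]}^*,\,(\ell_2)_{[1,n+1]}}\,[(\ell_1)_{[1,n+1]}(\ell_2)_{[n+2,2n+2]}]$ can be nonzero only when the reverse of the second half of $\ell_1$ agrees with the first half of $\ell_2$; in particular the apex of $\ell_1$ must equal the apex of $\ell_2$, and the product, when nonzero, again has that common apex. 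Hence $A_n^{(i)}A_n^{(j)}=0$ for $i\neq j$, so each $A_n^{(i)}$ is a subalgebra and $A_n=\bigoplus_i A_n^{(i)}$ as algebras. Moreover the $*$-operation reverses a loop while fixing its apex, so it preserves each summand; thus this is a decomposition of $*$-algebras.

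Finally I would invoke the Proposition: for each $i$, the loops with apex $v_i$ form a system of matrix units for a simple algebra isomorphic to $S_i$. Concretely, writing a loop with apex $v_i$ as a pair $(p,q)$ consisting of its first half and the reverse of its second half --- each an index ranging over the $d_i$ paths $\star\to v_i$ --- the multiplication and adjoint become $e_{pq}e_{rs}=\delta_{qr}e_{ps}$ and $e_{pq}^*=e_{qp}$, which identifies $A_n^{(i)}\cong M_{d_i}(\C)=S_i$ as $*$-algebras. Taking the direct sum of these isomorphisms yields a $*$-isomorphism $A_n=\bigoplus_i A_n^{(i)}\cong\bigoplus_i S_i=M_n$, and the dimension count $\dim A_n=\sum_i d_i^2=\dim M_n$ confirms nothing is lost.

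The only real content beyond the Proposition is the orthogonality of distinct blocks, the vanishing $A_n^{(i)}A_n^{(j)}=0$, which is exactly where the apex matching encoded in the multiplication's Kronecker delta does the work; everything else is bookkeeping --- organizing the matrix unit systems and checking that $*$ respects the grading by apex. I therefore expect this block-orthogonality step to be the main (though short) obstacle.
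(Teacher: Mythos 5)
Your proposal is correct and matches the paper's (implicit) argument: the paper states this corollary without further proof precisely because the preceding proposition already identifies the loops with a fixed apex vertex as a system of matrix units for a simple algebra isomorphic to $S_i$, and your block decomposition by apex plus the direct-sum bookkeeping is exactly the intended deduction. The block-orthogonality you highlight is indeed the only thing to check, and it follows from the Kronecker delta in the loop multiplication just as you say.
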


\begin{rem} At this point, we only choose such isomorphisms $\varphi\colon A_n\to M_n$ for $n=0,1$ which respects the inclusion given in \ref{inclusions}. The key step will be to inductively define isomorphisms so we may identify the Jones projections.
\end{rem}

\begin{defn}
Following \cite{MR696688}, let $\lambda_i$ be the Markov trace (column) vector for $M_i$ for $i=0,1$ such that 
$$m_+ \lambda_0=1=m_- \lambda_1,$$ 
so $\lambda_i$ gives the traces of minimal projections in the simple summands of $M_i$ for $i=0,1$. In order for the trace on $M_1$ to restrict to the trace on $M_0$, we must have $\Lambda \lambda_1 = \lambda_0$.

Recall that the inclusion matrix for $M_n\subset M_{n+1}$ is given by $\Lambda$ if $n$ is even and $\Lambda^T$ if $n$ is odd. This means that to extend the trace, we must have $\Lambda\Lambda^T\lambda_0=d^{-2}\lambda_0$, $\Lambda^T\Lambda\lambda_1=d^{-2}\lambda_1$, and $\lambda_n =d^{-2} \lambda_{n-2}$ for all $n\geq 2$,  where  $\lambda_n$ is the Markov trace vector for $M_n$ and $d=\sqrt{\|\Lambda^T\Lambda\|}=\sqrt{\|\Lambda\Lambda^T\|}$. 
\end{defn}

\begin{defn}\label{fp}
Let $\D\lambda=
\begin{pmatrix}
\lambda_0\\
d \lambda_i
\end{pmatrix}$, a Frobenius-Perron eigenvector for $\D
\begin{pmatrix}
0 & \Lambda\\
\Lambda^T & 0
\end{pmatrix}$.
\end{defn}

\begin{defn}[Traces]\label{trace}
We define a trace on $A_0$ by
$$
\tr_0([\eta_1\eta_2^*])=\begin{cases}
\lambda(t(\eta_1))=\lambda_0(t(\eta_1)) &\hsp{if $\eta_1=\eta_2$}\\
0 &\hsp{else.}
\end{cases}
$$
Suppose $\ell=[\eta_1\varepsilon_1\varepsilon_2^*\cdots \varepsilon_{2n-1}\varepsilon_{2n}^*\eta_2^*]\in A_n$ with $n\geq 1$. We define a trace on $A_n$ by
$$
\tr_{n}(\ell) = \begin{cases}
d^{-n}\lambda(s(\varepsilon_n)) &\hsp{if $n$ is even and $\ell=\ell^*$} \\
d^{-n}\lambda(t(\varepsilon_n)) &\hsp{if $n$ is odd and $\ell=\ell^*$}\\
0 &\hsp{if $\ell\neq \ell^*$.} 
\end{cases}
$$
\end{defn}

\begin{rem}
The isomorphisms $\varphi_n$ for $n=0,1$ preserve the trace. Moreover, $\tr_{n+1}|_{A_{n}}=\tr_n$ for all $n\in\N$ as $\lambda$ is a Forbenius-Perron eigenvector.
\end{rem}

\begin{prop}[Conditional Expectations]\label{conditional}
If $\ell=[\eta_1\varepsilon_1\varepsilon_2^*\cdots \varepsilon_{2n-1}\varepsilon_{2n}^*\eta_2^*]\in A_n$, the conditional expectation $A_n\to A_{n-1}$ is given by
\begin{align*}
E_{A_{n-1}}(\ell)
&= 
\begin{cases}\D
d^{-1}\delta_{\varepsilon_n,\varepsilon_{n+1}}
\left(\frac{\lambda(s(\varepsilon_n))}{\lambda(t(\varepsilon_n))}\right)
[\eta_1\varepsilon_1\varepsilon_2^*\cdots \varepsilon_{n-1}\varepsilon_{n+2}^*\cdots\varepsilon_{2n-1}\varepsilon_{2n}^*\eta_2^*]& \hsp{$n$ even}\\ \D
d^{-1}\delta_{\varepsilon_n,\varepsilon_{n+1}} 
\left(\frac{\lambda(t(\varepsilon_n))}{\lambda(s(\varepsilon_n))}\right)
[\eta_1\varepsilon_1\varepsilon_2^*\cdots \varepsilon_{n-1}^*\varepsilon_{n+2}\cdots\varepsilon_{2n-1}\varepsilon_{2n}^*\eta_2^*]& \hsp{$n$ odd.}
\end{cases}
\end{align*}
\end{prop}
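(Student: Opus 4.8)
The plan is to identify the trace-preserving conditional expectation with the orthogonal projection, in the GNS space $L^2(A_n,\tr_n)$, onto the subspace $A_{n-1}$, and then to compute that projection explicitly in the matrix-unit basis supplied by Corollary \ref{iso}. First I would reformulate the loop basis: a loop $\ell=[\eta_1\varepsilon_1\cdots\varepsilon_{2n}^*\eta_2^*]\in A_n$ is the matrix unit $E_{p,q}$, where $p=\ell_{[1,n+1]}$ and $q$ is the reversal of the bottom half $\ell_{[n+2,2n+2]}$ are paths in $\widetilde{\Gamma}_n$ from $\star$ to a common top vertex $v$; multiplication and $*$ become the usual operations $E_{p,q}E_{p',q'}=\delta_{q,p'}E_{p,q'}$ and $E_{p,q}^{*}=E_{q,p}$. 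In this language Definition \ref{trace} reads $\tr_n(E_{p,q})=\delta_{p,q}\,d^{-n}\lambda(v)$, the vertex $v$ being $s(\varepsilon_n)$ for $n$ even and $t(\varepsilon_n)$ for $n$ odd, which reconciles the two cases of the trace formula. A direct computation then gives $\langle E_{P,Q},E_{P',Q'}\rangle=\tr_n(E_{P',Q'}^{*}E_{P,Q})=\delta_{P,P'}\delta_{Q,Q'}\,d^{-n}\lambda(v)$, so the loops are orthogonal with $\|E_{P,Q}\|^2=d^{-n}\lambda(v)$. Since $A_{n-1}$ is a unital $*$-subalgebra on which $\tr_n$ restricts to $\tr_{n-1}$, the orthogonal projection onto $A_{n-1}$ is exactly the unique $\tr_n$-preserving conditional expectation, so it suffices to compute this projection.

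Next I would translate the inclusion of Definition \ref{inclusions} into matrix units. Inserting $\varepsilon\varepsilon^{*}$ (resp.\ $\varepsilon^{*}\varepsilon$) at the peak simply extends both paths by one edge, so $\iota(E_{p,q}^{(n-1)})=\sum_{\varepsilon}E_{p\varepsilon,q\varepsilon}^{(n)}$, the sum over edges $\varepsilon$ joining the level-$(n-1)$ endpoint $w$ of $p,q$ to the top level $n$. Comparing last edges shows these images are mutually orthogonal, so $\{\iota(E_{p,q}^{(n-1)})\}$ is an orthogonal basis of the subspace $A_{n-1}$. Using the Frobenius--Perron relation from Definition \ref{fp}, namely $\sum_{\varepsilon\text{ at }w}\lambda(\text{other end})=d\,\lambda(w)$ (eigenvalue $d$), I then get $\|\iota(E_{p,q}^{(n-1)})\|^2=\sum_{\varepsilon}d^{-n}\lambda(v_\varepsilon)=d^{-(n-1)}\lambda(w)$, consistent with $\iota$ being isometric on $L^2$.

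Finally I would assemble the projection. Since $\langle E_{P,Q}^{(n)},\iota(E_{p,q}^{(n-1)})\rangle=\sum_{\varepsilon}\delta_{P,p\varepsilon}\delta_{Q,q\varepsilon}\,d^{-n}\lambda(v)$ is nonzero only when $P$ and $Q$ share a common last edge $\varepsilon$, the projection of $E_{P,Q}^{(n)}$ vanishes unless $\varepsilon_n=\varepsilon_{n+1}$, which is the Kronecker delta in the statement; writing $P=P'\varepsilon$ and $Q=Q'\varepsilon$, only the term $p=P'$, $q=Q'$ survives, and
$$
E_{A_{n-1}}(E_{P'\varepsilon,Q'\varepsilon})=\frac{\langle E_{P'\varepsilon,Q'\varepsilon},\iota(E_{P',Q'})\rangle}{\|\iota(E_{P',Q'})\|^2}\,\iota(E_{P',Q'})=d^{-1}\,\frac{\lambda(v)}{\lambda(w)}\,E_{P',Q'}^{(n-1)},
$$
where $v$ is the level-$n$ peak and $w$ the level-$(n-1)$ peak. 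Reading $v,w$ off as $s(\varepsilon_n),t(\varepsilon_n)$ for $n$ even and as $t(\varepsilon_n),s(\varepsilon_n)$ for $n$ odd recovers the two displayed formulas, the collapsed loop $E_{P',Q'}^{(n-1)}$ being precisely $[\eta_1\varepsilon_1\cdots\varepsilon_{n-1}\varepsilon_{n+2}^{*}\cdots\eta_2^{*}]$. The main obstacle is purely bookkeeping: keeping the parity-dependent star conventions straight so that the source and target vertices, and hence the ratio $\lambda(v)/\lambda(w)$, land on the correct side, together with pinning down that the Frobenius--Perron eigenvalue is $d$ under the normalization of Definition \ref{fp}. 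I would note the alternative of verifying the formula directly via $A_{n-1}$-bimodularity and trace-invariance; the latter is immediate from the computation above, but bimodularity is messier because of the sum in the inclusion, which is why I prefer the projection route.
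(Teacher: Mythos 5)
Your argument is correct, and it reaches the formula by a genuinely different route than the paper. The paper's proof is a \emph{verification}: it takes the displayed formula as an ansatz and checks the characterizing identity $\tr_n(xy)=\tr_{n-1}(E_{A_{n-1}}(x)y)$ directly on pairs of loops $x\in A_n$, $y\in A_{n-1}$, with the Kronecker deltas coming from loop multiplication doing all of the bookkeeping. You instead \emph{derive} the formula: after recasting loops as matrix units $E_{p,q}$, you show that the images $\iota(E_{p,q})$ of the matrix units of $A_{n-1}$ form an orthogonal basis of $\iota(A_{n-1})\subset L^2(A_n,\tr_n)$, compute their norms from the Frobenius--Perron relation $\sum_{\varepsilon\text{ at }w}\lambda(\text{other end})=d\,\lambda(w)$, and expand the orthogonal projection in that basis. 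Both proofs rest on the same characterization of the trace-preserving conditional expectation (orthogonal projection onto the subalgebra, i.e.\ adjoint of the inclusion for the trace pairings), so the difference is one of direction rather than substance, but each buys something: the paper's check is shorter, though only because the formula is already written down and the coefficient is left unexplained, while your computation shows where $d^{-1}\lambda(v)/\lambda(w)$ comes from (a ratio of $L^2$ norms of basis vectors at consecutive levels) and re-establishes the compatibility $\tr_n|_{A_{n-1}}=\tr_{n-1}$ along the way instead of quoting it. One point worth recording: you correctly pinned down the Frobenius--Perron eigenvalue of $\left(\begin{smallmatrix}0&\Lambda\\ \Lambda^T&0\end{smallmatrix}\right)$ at $\lambda=\left(\begin{smallmatrix}\lambda_0\\ d\lambda_1\end{smallmatrix}\right)$ as $d$, equivalently $\Lambda\lambda_1=\lambda_0$ and $\Lambda^T\lambda_0=d^2\lambda_1$; the relations $\Lambda\Lambda^T\lambda_0=d^{-2}\lambda_0$ and $\Lambda^T\Lambda\lambda_1=d^{-2}\lambda_1$ printed near Definition \ref{fp} have the wrong power of $d$ (they should read $d^{2}$), so your reading---which is what makes your norm computation and hence the final coefficient come out right---is the correct one.
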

\begin{proof}
We consider the case $n$ even. The case $n$ odd is similar. We must show $\tr_{n}(xy)=\tr_{n-1}(E_{A_{n-1}}(x)y)$ for all $x\in A_n$ and $y\in A_{n-1}$. It suffices to check when $x,y$ are loops. If
$$
x=[\eta_1\varepsilon_1\varepsilon_2^*\cdots \varepsilon_{2n-1}\varepsilon_{2n}^*\eta_2^*]\hsp{and}
y=[\eta_3\xi_1\xi_2^*\cdots \xi_{2n-3}\xi_{2n-2}^*\eta_4^*],
$$
using the formula above, we have
{\fontsize{10}{10}{
\begin{align*}
\tr_{n-1}(E_{A_{n-1}}(x)y)&= d^{-1}
\delta_{\varepsilon_n,\varepsilon_{n+1}}\delta_{y_{[1,n]},x_{[n+2,2n+2]}^*}
\frac{\lambda(s(\varepsilon_n))}{\lambda(t(\varepsilon_n))}\tr_{n-1}(
[\eta_1\varepsilon_1\cdots \varepsilon_{n-1} \xi_n^*\xi_{n+1}\cdots \xi_{2n-2}^*\eta_4^*])\\
&=d^{-n}\delta_{y_{[1,n]},x_{[n+2,2n+2]}^*}\delta_{\varepsilon_n,\varepsilon_{n+1}}\delta_{x_{[1,n]},y_{[n+1,2n-2]}^*}\lambda(s(\varepsilon_n))
=\tr_n(xy).
\end{align*}}}
\end{proof}

\begin{defn}[Jones Projections]\label{jonesproj}
For $n\in\N$, define distinguished elements of $A_{n+1}$ as follows: if $n$ is odd, define
{\fontsize{10}{10}{
$$
F_n= \sum\limits_{\vec{i}}\sum_{t(\eta)=s(\varepsilon_{i_1})} \frac{[ \lambda(t(\varepsilon_{i_n}))\lambda(t(\varepsilon_{i_{n+1}}))]^{1/2}}{\lambda(s(\varepsilon_{i_n}))} [\eta\varepsilon_{i_1}\varepsilon_{i_2}^*\cdots \varepsilon_{i_{n-1}}^* \varepsilon_{i_n}\varepsilon_{i_n}^*\varepsilon_{i_{n+1}}\varepsilon_{i_{n+1}}^*\varepsilon_{i_{n-1}}\cdots\varepsilon_{i_{2}}\varepsilon_{i_1}^*\eta^*]
$$}}
where the sum is taken over all vectors  $\vec{i}=(i_1,i_2,\dots,i_{n+1})$ such that 
$$
[\varepsilon_{i_1}\varepsilon_{i_2}^*\cdots \varepsilon_{i_{n-1}}^* \varepsilon_{i_n}\varepsilon_{i_n}^*\varepsilon_{i_{n+1}}\varepsilon_{i_{n+1}}^*\varepsilon_{i_{n-1}}\cdots\varepsilon_{i_{2}}\varepsilon_{i_1}^*]\in G_{n+1,+}
$$
If $n$ is even, then define
{\fontsize{10}{10}{
$$
F_n=
\sum\limits_{\vec{i}}\sum\limits_{t(\eta)=s(\varepsilon_{i_1})} \frac{[ \lambda(s(\varepsilon_{i_n}))\lambda(s(\varepsilon_{i_{n+1}}))]^{1/2}}{\lambda(t(\varepsilon_{i_n}))} [\eta\varepsilon_{i_1}\varepsilon_{i_2}^*\cdots \varepsilon_{i_{n-1}} \varepsilon_{i_n}^*\varepsilon_{i_n}\varepsilon_{i_{n+1}}^*\varepsilon_{i_{n+1}}\varepsilon_{i_{n-1}}^*\cdots\varepsilon_{i_{2}}\varepsilon_{i_1}^*\eta^*]
$$}}
with a similar limitation on the vectors $\vec{i}=(i_1,i_2,\dots,i_{j+1})$.
\end{defn}

\begin{lem}\label{basiclemma}
\item[(1)]
$F_n x F_n = dE_{A_{n-1}}(x)F_n$ for all $x\in A_{n}$,
\item[(2)]
The map $A_{n-1}\to A_{n-1}F_n$ by $y\mapsto y F_n$ is injective, and
\item[(3)] $\tr_{n+1}(xF_n)=d^{-1}\tr_n(x)$ for all $x\in A_n$.
\end{lem}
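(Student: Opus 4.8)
The plan is to treat $F_n$ as an explicit element of the multi-matrix algebra $A_{n+1}$, in which the loops are a system of matrix units (a loop $\ell$ is the matrix unit determined by its up-path $\ell_{[1,n+2]}$ and the reversal of $\ell_{[n+3,2n+4]}$). I would verify the two algebraic identities (1) and (3) by a direct, bookkeeping-heavy computation on loops, and then deduce the injectivity statement (2) formally from (1), (3), and faithfulness of the trace. The only genuine input from the geometry is the eigenvector relation behind Definition \ref{fp}: rewritten vertex-by-vertex, $\Lambda\lambda_1=\lambda_0$ and $\Lambda^{T}\lambda_0=d^{2}\lambda_1$ say that $\sum_{s(\varepsilon)=v}\lambda(t(\varepsilon))=d\,\lambda(v)$ for $v\in\V_+$, and the analogous identity summing over edges into a vertex of $\V_-$. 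A preliminary observation I would record first is that $F_n=F_n^{*}$: each loop in Definition \ref{jonesproj} is a palindrome in its edge word and its scalar coefficient is real and symmetric in $t(\varepsilon_{i_n}),t(\varepsilon_{i_{n+1}})$ (resp. the sources), so the involution of Definition \ref{adjoint} fixes each summand. This is needed for both (1) and (2).

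For (3) it suffices by linearity to take $x$ a single loop in $A_n$. One forms $xF_n$ using the cross-level multiplication rule, which collapses the sum defining $F_n$ to those $\vec{i}$ whose initial path matches the terminal path of $x$; then $\tr_{n+1}$ annihilates every resulting loop that is not self-adjoint. The surviving loop is self-adjoint exactly when $x$ is self-adjoint, and on such $x$ the trace formula of Definition \ref{trace} combines the factor $d^{-(n+1)}\lambda(\cdot)$ with the coefficient $[\lambda\lambda]^{1/2}/\lambda$ of $F_n$ to give precisely $d^{-1}\cdot d^{-n}\lambda(\cdot)=d^{-1}\tr_n(x)$. Taking $x=1$ yields the normalization $\tr_{n+1}(F_n)=d^{-1}$, a useful consistency check.

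The main work, and the place I expect the real obstacle, is (1). For $x$ a loop I would expand $F_n x F_n$ as a triple sum over the indices of the two copies of $F_n$ and of $x$. The matrix-unit matching conditions force the outer halves of the two $F_n$'s to agree and identify the middle edges, leaving a single internal edge (the $\varepsilon_{i_{n+1}}$ of one factor) free to be summed. Summing its weight over all edges at the relevant vertex is exactly where the Frobenius--Perron relation $\sum_{s(\varepsilon)=v}\lambda(t(\varepsilon))=d\,\lambda(v)$ enters: it collapses the free sum, produces the overall factor $d$, and leaves a scalar that must be recognized as the conditional-expectation coefficient of Proposition \ref{conditional}. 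Matching the surviving loop against $E_{A_{n-1}}(x)F_n$, and checking that the $\delta_{\varepsilon_n,\varepsilon_{n+1}}$ pattern of $E_{A_{n-1}}$ appears with the correct weight, finishes the identity. The delicate points are keeping the parity ($n$ even versus odd) and the source/target data straight, and confirming that the square-root weights $[\lambda(t(\varepsilon_{i_n}))\lambda(t(\varepsilon_{i_{n+1}}))]^{1/2}$ recombine into the non-square-root ratio $\lambda(s(\varepsilon_n))/\lambda(t(\varepsilon_n))$ that appears in $E_{A_{n-1}}$.

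Finally, (2) follows with no further computation. For $y\in A_{n-1}$, using $F_n=F_n^{*}$ and (1) applied to $x=y^{*}y\in A_{n-1}\subseteq A_n$ gives $(yF_n)^{*}(yF_n)=F_n\,y^{*}y\,F_n=d\,E_{A_{n-1}}(y^{*}y)F_n=d\,y^{*}y\,F_n$, since $E_{A_{n-1}}$ fixes $A_{n-1}$. Taking $\tr_{n+1}$ and invoking (3) together with the trace-compatibility $\tr_{n+1}|_{A_n}=\tr_n$ yields $\tr_{n+1}\bigl((yF_n)^{*}(yF_n)\bigr)=d\,\tr_{n+1}(y^{*}y\,F_n)=\tr_{n-1}(y^{*}y)$. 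Faithfulness of the trace then forces $y=0$ whenever $yF_n=0$, which is the asserted injectivity.
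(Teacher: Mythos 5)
Your treatment of parts (1) and (3) --- direct loop computations in which the Frobenius--Perron relation $\sum_{s(\varepsilon)=v}\lambda(t(\varepsilon))=d\,\lambda(v)$ collapses the free internal sum --- is exactly the paper's approach: the paper carries out (1) in detail for $n$ odd and dismisses (3) as a straightforward calculation. Where you genuinely diverge is part (2). The paper proves injectivity by a second explicit computation: it expands $yF_n$ for a standard matrix unit $y\in A_{n-1}$ and observes that distinct matrix units are carried to visibly linearly independent combinations of loops. You instead deduce (2) formally from (1), (3), $F_n=F_n^*$, and faithfulness of the trace via $(yF_n)^*(yF_n)=F_n y^*y F_n=d\,E_{A_{n-1}}(y^*y)F_n=d\,y^*yF_n$ and then $\tr_{n+1}\bigl((yF_n)^*(yF_n)\bigr)=\tr_n(y^*y)$. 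Your route is cleaner and avoids a second round of bookkeeping, at the cost of invoking positivity and faithfulness of $\tr_{n+1}$ --- which do hold here, since the weights $d^{-(n+1)}\lambda(v)$ on the diagonal matrix units are strictly positive ($\lambda$ being a Frobenius--Perron eigenvector of a connected graph) --- whereas the paper's computation is self-contained and needs neither faithfulness nor self-adjointness of $F_n$.

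One small repair to your argument: the individual summands of $F_n$ are \emph{not} palindromes, so the involution does not fix each summand; rather, $*$ exchanges the summand indexed by $(\dots,i_n,i_{n+1})$ with the one indexed by $(\dots,i_{n+1},i_n)$. Self-adjointness of $F_n$ nevertheless holds: the loop constraints force $s(\varepsilon_{i_n})=s(\varepsilon_{i_{n+1}})$ when $n$ is odd (and $t(\varepsilon_{i_n})=t(\varepsilon_{i_{n+1}})$ when $n$ is even), so the scalar coefficient is symmetric under the swap and the sum as a whole is $*$-invariant. With that correction your proof of (2) is complete.
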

\begin{proof}
We prove the case $n$ odd. The case $n$ even is similar. 
\item[(1)]
If $x=[\eta_1\varepsilon_1\varepsilon_2^*\cdots\varepsilon_{n-1}\varepsilon_n^*\cdots \varepsilon_{2n-1}\varepsilon_{2n}^*\eta_2^*]\in A_{n}$, then
{\fontsize{10}{10}{
\begin{align*}
F_n xF_n &= \sum\limits_{\vec{i}}\sum_{t(\eta)=s(\varepsilon_{i_1})} \frac{[ \lambda(t(\varepsilon_{i_n}))\lambda(t(\varepsilon_{i_{n+1}}))]^{1/2}}{\lambda(s(\varepsilon_{i_n}))} [\eta\varepsilon_{i_1}\varepsilon_{i_2}^*\cdots \varepsilon_{i_{n-1}}^* \varepsilon_{i_n}\varepsilon_{i_n}^*\varepsilon_{i_{n+1}}\varepsilon_{i_{n+1}}^*\varepsilon_{i_{n-1}}\cdots\varepsilon_{i_{2}}\varepsilon_{i_1}^*\eta^*]\times\\
&\hspace{.2in}x\sum\limits_{\vec{j}}\sum_{t(\kappa)=s(\varepsilon_{j_1})} \frac{[\lambda(t(\varepsilon_{j_n}))\lambda(t(\varepsilon_{j_{n+1}}))]^{1/2}}{\lambda(s(\varepsilon_{j_n}))} [\kappa\varepsilon_{j_1}\varepsilon_{j_2}^*\cdots \varepsilon_{j_{n-1}}^* \varepsilon_{j_n}\varepsilon_{j_n}^*\varepsilon_{j_{n+1}}\varepsilon_{j_{n+1}}^*\varepsilon_{j_{n-1}}\cdots\varepsilon_{j_{2}}\varepsilon_{j_1}^*\kappa^*]\\
&=\sum\limits_{s(\varepsilon)=s(\varepsilon_{n-1})}
\frac{[\lambda(t(\varepsilon))\lambda(t(\varepsilon_{{n+1}}))]^{1/2}}{\lambda(s(\varepsilon))} [\eta_1\varepsilon_1\varepsilon_2^*\cdots \varepsilon_{n-1}^*\varepsilon\varepsilon^*\varepsilon_n\varepsilon_{n+1}^*\cdots \varepsilon_{2n-1}\varepsilon_{2n}^*\eta_2^*]\times\\
&\hspace{.2in}\sum\limits_{\vec{j}}\sum_{t(\kappa)=s(\varepsilon_{j_1})} \frac{[\lambda(t(\varepsilon_{j_n}))\lambda(t(\varepsilon_{j_{n+1}}))]^{1/2}}{\lambda(s(\varepsilon_{j_n}))} [\kappa\varepsilon_{j_1}\varepsilon_{j_2}^*\cdots \varepsilon_{j_{n-1}}^* \varepsilon_{j_n}\varepsilon_{j_n}^*\varepsilon_{j_{n+1}}\varepsilon_{j_{n+1}}^*\varepsilon_{j_{n-1}}\cdots\varepsilon_{j_{2}}\varepsilon_{j_1}^*\kappa^*]\\
&=\delta_{\varepsilon_n,\varepsilon_{n+1}}\frac{ \lambda(t(\varepsilon_{n}))}{\lambda(s(\varepsilon_{n}))} \sum\limits_{\substack{s(\varepsilon)=s(\varepsilon_{n-1})\\s(\xi)=s(\varepsilon_{n+2})}}
\frac{[ \lambda(t(\varepsilon))\lambda(t(\xi))]^{1/2}}{\lambda(s(\varepsilon))} [\eta_1\varepsilon_1\varepsilon_2^*\cdots \varepsilon_{n-1}^*\varepsilon\varepsilon^*\xi\xi^*\varepsilon_{n+2}\cdots \varepsilon_{2n-1}\varepsilon_{2n}^*\eta_2^*]\\
&=dE_{A_{n-1}}(x) F_n.
\end{align*} }}
\item[(2)] 
If $y=[\eta_1\varepsilon_1\varepsilon_2^*\cdots\varepsilon_{2n-3}\varepsilon_{2n-2}^*\eta_2^*]$, then 
{\fontsize{10}{10}{
\begin{align*}
yF_n &= y\sum\limits_{\vec{i}}\sum_{t(\eta)=s(\varepsilon_{i_1})} \frac{[\lambda(t(\varepsilon_{i_n}))\lambda(t(\varepsilon_{i_{n+1}}))]^{1/2}}{\lambda(s(\varepsilon_{i_n}))} [\eta\varepsilon_{i_1}\varepsilon_{i_2}^*\cdots \varepsilon_{i_{n-1}}^* \varepsilon_{i_n}\varepsilon_{i_n}^*\varepsilon_{i_{n+1}}\varepsilon_{i_{n+1}}^*\varepsilon_{i_{n-1}}\cdots\varepsilon_{i_{2}}\varepsilon_{i_1}^*\eta^*]\\
&=\sum\limits_{\substack{s(\varepsilon)=s(\varepsilon_{n-1})\\s(\xi)=s(\varepsilon_{n+2})}}
\frac{[ \lambda(t(\varepsilon))\lambda(t(\xi))]^{1/2}}{\lambda(s(\varepsilon))}[\eta_1\varepsilon_1\varepsilon_2^*\cdots \varepsilon_{n-1}^*\varepsilon\varepsilon^*\xi\xi^*\varepsilon_{n+2}\cdots \varepsilon_{2n-1}\varepsilon_{2n}^*\eta_2^*].
\end{align*}}}
Hence it is clear that the standard matrix units of $A_{n-1}$ are mapped to a linearly independent set.\item[(3)]
Another straightforward calculation.
\end{proof}

\begin{prop}[Basic Construction]\label{basic}
For $n\in\N$, the inclusion $A_{n-1}\subset A_n\subset (A_{n+1},\tr_{n+1},d^{-1}F_n)$ is standard. Hence for all $k\geq 0$, there are isomorphisms $\varphi_k\colon A_k\to M_k$ preserving the trace such that $\varphi_{k+1}|_{A_{k}}=\varphi_{k}$ and $\varphi_{m}(F_n)=E_n$ for all $m>n$.
\end{prop}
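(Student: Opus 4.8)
The plan is to read this off from Lemma \ref{basicconstruction} applied to $M_0=A_{n-1}$, $M_1=A_n$, $P=A_{n+1}$, $\tr_P=\tr_{n+1}$, and $p=d^{-1}F_n$, and then to bootstrap the $\varphi_k$ by induction. For the standardness I would first note that $A_{n-1}\subset(A_n,\tr_n)$ is a connected, unital inclusion of finite dimensional C$^*$-algebras with the Markov trace, hence strongly Markov, and that $\tr_{n+1}$ extends $\tr_n$ (already remarked). Next, $F_n=F_n^*$ is immediate from the symmetric form of Definition \ref{jonesproj}, and setting $x=1$ in Lemma \ref{basiclemma}(1) gives $F_n^2=dE_{A_{n-1}}(1)F_n=dF_n$, so $p=d^{-1}F_n$ is a self-adjoint projection. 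Lemma \ref{basiclemma}(3) says $\tr_{n+1}(xF_n)=d^{-1}\tr_n(x)$ for all $x\in A_n$, i.e. $E_{A_n}(F_n)=d^{-1}$, whence $E_{A_n}(p)=d^{-2}$. Condition (1) of Lemma \ref{basicconstruction}, $pwp=E_{A_{n-1}}(w)p$, is Lemma \ref{basiclemma}(1) scaled by $d^{-2}$, and condition (2), injectivity of $z\mapsto zp$ on $A_{n-1}$, is Lemma \ref{basiclemma}(2). The ambient-representation hypothesis of Lemma \ref{basicconstruction} is harmless, since its proof uses only (1) and (2); alternatively one represents the finite dimensional algebra $A_{n+1}$ on $L^2(A_n,\tr_n)$. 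This establishes that the inclusion is standard.

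The subtle point is that $A_{n+1}$ is defined combinatorially, so before invoking the ``Moreover'' clause I must show $\langle A_n,F_n\rangle=A_nF_nA_n=A_{n+1}$. Closure under multiplication is automatic from Lemma \ref{basiclemma}(1):
\[
(aF_na')(bF_nb')=a\,F_n(a'b)F_n\,b'=d\,aE_{A_{n-1}}(a'b)F_nb'\in A_nF_nA_n,
\]
so $A_nF_nA_n$ is a $*$-subalgebra and equals $\langle A_n,F_n\rangle$. For equality with all of $A_{n+1}$ I would argue by dimension: the first part of Lemma \ref{basicconstruction} gives a $*$-isomorphism $A_nF_nA_n\cong\langle A_n,e_n\rangle$, the basic construction of $A_{n-1}\subset A_n$. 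Through the inductively built trace-preserving isomorphism of inclusions $(A_{n-1}\subset A_n)\cong(M_{n-1}\subset M_n)$ this basic construction is isomorphic to $\langle M_n,e_n\rangle=M_{n+1}$, so $\dim A_nF_nA_n=\dim M_{n+1}=\dim A_{n+1}$ by Corollary \ref{iso}. As $A_nF_nA_n\subseteq A_{n+1}$, the two coincide.

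With $\varphi_0,\varphi_1$ in hand (trace-preserving and inclusion-respecting, from the earlier remark), I would proceed by induction. Given compatible trace-preserving isomorphisms $\varphi_{n-1},\varphi_n$, the ``Moreover'' clause of Lemma \ref{basicconstruction} — now applicable because $A_nF_nA_n=A_{n+1}$ — yields a trace-preserving isomorphism $\widetilde{\psi}\colon A_{n+1}\to\langle A_n,e_n\rangle$ fixing $A_n$ and sending $d^{-1}F_n$ to $e_n$. Composing with the isomorphism $\langle A_n,e_n\rangle\cong\langle M_n,e_n\rangle=M_{n+1}$ induced by $\varphi_n$ (functoriality of the basic construction under trace-preserving isomorphisms of inclusions) defines $\varphi_{n+1}\colon A_{n+1}\to M_{n+1}$. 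It is a trace-preserving isomorphism, it restricts to $\varphi_n$ on $A_n$ since $\widetilde{\psi}$ fixes $A_n$, and $\varphi_{n+1}(F_n)=d\,\varphi_{n+1}(d^{-1}F_n)=d\,e_n=E_n$. Compatibility of the $\varphi_k$ then forces $\varphi_m(F_n)=\varphi_{n+1}(F_n)=E_n$ for every $m>n$, since $F_n\in A_{n+1}\subseteq A_m$.

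The one step that is not mere bookkeeping of Lemma \ref{basiclemma} is matching the combinatorially defined $A_{n+1}$ with the abstract basic construction $A_nF_nA_n$; I expect this to be the main obstacle, and the dimension count through Corollary \ref{iso} is what bridges the combinatorial and operator-algebraic pictures. Everything else is a transcription of the general strong Markov theory of Section \ref{sec:pa}.
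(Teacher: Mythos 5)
Your proposal is correct and follows essentially the same route as the paper: induction starting from $\varphi_0,\varphi_1$, identification of $A_nF_nA_n$ with the basic construction via Lemma \ref{basicconstruction} and Lemma \ref{basiclemma}, the dimension count $\dim(A_{n+1})=\dim(M_{n+1})$ from Corollary \ref{iso} to get $A_nF_nA_n=A_{n+1}$, and then defining $\varphi_{n+1}$ so that $F_n\mapsto E_n$. You are merely more explicit than the paper in verifying the hypotheses of Lemma \ref{basicconstruction} (projection property, $E_{A_n}(d^{-1}F_n)=d^{-2}$, the ambient-representation issue) and in deriving trace preservation from the ``Moreover'' clause rather than from uniqueness of the Markov trace.
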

\begin{proof}
We construct the isomorphisms $\varphi_n$ for $n\geq 1$ by induction on $n$. The base case is finished. Suppose we have constructed $\varphi_n$ for $n\geq 1$. We know that $M_{n+1}=M_n E_n M_n$ and $A_n\cong M_n$ via $\varphi_n$. By Lemmata \ref{basicconstruction} and \ref{basiclemma}, there is an algebra isomorphism $h_{n+1}\colon M_{n+1}=M_nE_nM_n\to A_nF_nA_n\subseteq A_{n+1}$ such that $E_n\mapsto F_n$. But $\dim(M_{n+1})=\dim(A_{n+1})$, so $A_{n+1}=A_n F_n A_n$, and we set $\varphi_{n+1}=h_{n+1}^{-1}$, which extends $\varphi_n$. Finally, note the $\varphi_m$'s preserve the trace by construction and the uniqueness of the Markov trace.
\end{proof}

\subsection{Relative commutants versus loops on $\Gamma$}
We provide isomorphisms between the relative commutants of the tower $(A_n)_{n\geq 0}$ and the spaces $G_{n,\pm}$. 

\begin{prop}[Central Vectors] 
A basis for the central vectors $A_0'\cap A_n$ is given by
$$
S_{0,n}=\set{\sum\limits_{t(\eta)=s(\varepsilon_1)} [\eta\varepsilon_{1}\varepsilon_2^*\cdots \varepsilon_{2n-1}\varepsilon_{2n}^*\eta^*]\in A_n}{[\varepsilon_1\varepsilon_2^*\cdots \varepsilon_{2m-1}\varepsilon_{2m}^*]\in G_{n,+}}.
$$
A basis for the central vectors $A_1'\cap A_{n+1}$ is given by
$$
S_{1,n+1}=\set{\sum\limits_{\substack{t(\eta)=s(\varepsilon)\\t(\varepsilon)=t(\varepsilon_1)}} [\eta\varepsilon\varepsilon_{1}^*\varepsilon_2\cdots \varepsilon_{2n-1}^*\varepsilon_{2n}\varepsilon^*\eta^*]\in A_{n+1}}{[\varepsilon_1^*\varepsilon_2\cdots \varepsilon_{2n-1}^*\varepsilon_{2n}]\in G_{n,-}}.
$$
\end{prop}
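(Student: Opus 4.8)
The plan is to recognize each $A_n$ as a path algebra and then run the standard Ocneanu relative-commutant computation in terms of matrix units. First I would observe that the multiplication rule displays a loop $[\eta_1\varepsilon_1\varepsilon_2^*\cdots\varepsilon_{2n}^*\eta_2^*]$ as the matrix unit $e_{\mu,\nu}$, where $\mu=\eta_1\varepsilon_1\varepsilon_2^*\cdots$ is the ascending half-path from $\star$ to the top vertex at position $n+1$ and $\nu$ is the reversed descending half-path; the rule $\ell_1\ell_2=\delta_{(\ell_1)_{[n+2,2n+2]}^*,(\ell_2)_{[1,n+1]}}[\cdots]$ then reads $e_{\mu,\nu}e_{\sigma,\tau}=\delta_{\nu,\sigma}e_{\mu,\tau}$, so $A_n\cong\bigoplus_w M_{k_w}(\C)$ with $w$ ranging over level-$n$ vertices and $k_w$ the number of paths from $\star$ to $w$. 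Unwinding Definition \ref{inclusions} iteratively, the image of $A_0$ in $A_n$ is spanned by the elements $a_{\eta,\eta'}=\sum_{\rho} e_{\eta\rho,\eta'\rho}$, where $\eta,\eta'$ are $\star$-edges with $t(\eta)=t(\eta')$ and $\rho$ runs over all paths from that vertex up to level $n$.

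Next I would impose the commutation condition. Writing an arbitrary $x=\sum c_{\mu,\nu}e_{\mu,\nu}$, I compute $a_{\eta,\eta'}x$ and $xa_{\eta,\eta'}$ using $e_{\mu,\nu}e_{\sigma,\tau}=\delta_{\nu,\sigma}e_{\mu,\tau}$ and compare coefficients of each $e_{p,q}$. Demanding equality for every $A_0$-matrix unit $a_{\eta,\eta'}$ forces two things: (i) $c_{\mu,\nu}=0$ unless $\mu$ and $\nu$ begin with the same $\star$-edge, and (ii) for a fixed pair of tails $(\alpha,\beta)$ emanating from a common vertex $v\in\V_+$, the coefficient $c_{\eta\alpha,\eta\beta}$ is independent of the $\star$-edge $\eta$ with $t(\eta)=v$. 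Consequently $x=\sum_{\alpha,\beta}c_{\alpha,\beta}\sum_{\eta}e_{\eta\alpha,\eta\beta}$, and each inner sum $\sum_{\eta}e_{\eta\alpha,\eta\beta}$ is exactly the element of $S_{0,n}$ attached to the loop in $G_{n,+}$ obtained by concatenating $\alpha$ with the reverse of $\beta$. This shows $A_0'\cap A_n=\spann S_{0,n}$; linear independence is immediate, since distinct loops in $G_{n,+}$ produce elements supported on disjoint families of matrix units. Running the same coefficient comparison in reverse confirms that each element of $S_{0,n}$ genuinely commutes with $A_0$.

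Finally, the statement for $A_1'\cap A_{n+1}$ is the identical argument one level up: paths from $\star$ to level $1$ are the length-two strings $\eta\varepsilon$, the image of $A_1$ is spanned by $\sum_\rho e_{\eta\varepsilon\rho,\eta'\varepsilon'\rho}$, and the commutation analysis forces the coefficients to be supported on pairs of legs sharing a full bottom string $\eta\varepsilon$, with the coefficient independent of that string. The surviving data is a loop of length $2n$ based at the odd vertex $t(\varepsilon)$, i.e.\ an element of $G_{n,-}$, and summing over the bottom strings reproduces $S_{1,n+1}$. I expect the only real friction to be notational bookkeeping: keeping the two half-paths, the orientation conventions of $G_{n,\pm}$, and the $\star$-augmentation straight, together with the mild care needed to extract both conditions (i) and (ii) simultaneously from the single coefficient comparison.
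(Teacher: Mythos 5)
Your proposal is correct and is essentially the paper's own argument: the paper likewise verifies directly that the elements of $S_{0,n}$ (resp.\ $S_{1,n+1}$) commute with the loop generators of $A_0$ (resp.\ $A_1$), and obtains spanning by compressing, writing $x=\sum_\eta[\eta\eta^*]\,x\,[\eta\eta^*]$ for $x\in A_0'\cap A_n$. If anything, your version is more complete at one point: the paper's displayed compression only yields your condition (i) (block-diagonality in the initial $\star$-edge), and the coefficient-matching condition (ii) — which you extract by commuting with the off-diagonal matrix units $a_{\eta,\eta'}$ — is exactly the step the paper leaves implicit when it asserts that the compressed element lies in $\spann(S_{0,n})$.
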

\begin{proof}
Note that if $[\zeta_1\zeta_2^*]\in A_0$, then we have
{\fontsize{10}{10}{
\begin{align*}
[\zeta_1\zeta_2^*]\cdot\sum\limits_{t(\eta)=s(\varepsilon_1)}[\eta \varepsilon_1\varepsilon_2^*\cdots \varepsilon_{2n-1}\varepsilon_{2n}^*\eta^*]
&=
\sum\limits_{t(\eta)=s(\varepsilon_1)}\delta_{\zeta_2,\eta}[\zeta_1 \varepsilon_1\varepsilon_2^*\cdots \varepsilon_{2n-1}\varepsilon_{2n}^*\eta^*]\\
&=
[\zeta_1 \varepsilon_1\varepsilon_2^*\cdots \varepsilon_{2n-1}\varepsilon_{2n}^*\zeta_2^*]
=
\sum\limits_{t(\eta)=s(\varepsilon_1)}\delta_{\eta,\zeta_1}[\eta \varepsilon_1\varepsilon_2^*\cdots \varepsilon_{2n-1}\varepsilon_{2n}^*\zeta_2^*]\\
&=
\left(\sum\limits_{t(\eta)=s(\varepsilon_1)}[\eta \varepsilon_1\varepsilon_2^*\cdots \varepsilon_{2n-1}\varepsilon_{2n}^*\eta^*]\right)\cdot [\zeta_1\zeta_2^*]
\end{align*}}}
Hence $S_{0,n}\subset A_0'\cap A_n$. Similarly, $S_{1,n+1}\subset A_1'\cap A_n$.

Suppose now that $x\in A_0'\cap A_n$. Then since
$1_{A_0}=\sum_{\eta} [\eta\eta^*]$,
we have
$$
x=\left(\sum\limits_{\eta} [\eta\eta^*]\right) x =\left(\sum\limits_{\eta} [\eta\eta^*]\cdot[\eta\eta^*]\right) x
=\sum\limits_{\eta} [\eta\eta^*]\cdot x\cdot [\eta\eta^*] \in\spann(S_{0,n}).
$$
Similarly, $A_1'\cap A_{n+1}\subseteq \spann(S_{1,n+1})$.
\end{proof}

\begin{defn}\label{commutant}
For $n\in\Z_{\geq 0}$, let $H_{n,+}=A_0'\cap A_n$, $H_{n,-}=A_1'\cap A_{n+1}$, $Q_{n,+}=M_0'\cap M_n$, and $Q_{n,-}=M_1'\cap M_{n+1}$.
\end{defn}

\begin{cor}\label{gpaspaceiso}
There are canonical algebra $*$-isomorphisms $\phi_{n,\pm}\colon G_{n,\pm}\to H_{n,\pm}$. If $n=0$, the isomorphisms are given by
$$
\phi_{0,+}(v_+)=\sum\limits_{t(\eta)=v_+} [\eta\eta^*]\hsp{and} \phi_{0,-}(v_-)=\sum\limits_{t(\eta)=s(\varepsilon);t(\varepsilon)=v_-} [\eta\varepsilon\varepsilon^*\eta^*].
$$
For $n\in\N$, the isomorphisms are given by
\begin{align*}
\phi_{n,+}([\varepsilon_1\varepsilon_2^*\cdots \varepsilon_{2n-1}\varepsilon_{2n}^*])&= \sum\limits_{t(\eta)=s(\varepsilon_1)}[\eta \varepsilon_1\varepsilon_2^*\cdots \varepsilon_{2n-1}\varepsilon_{2n}^*\eta^*]\hsp{and}\\ 
\phi_{n,-}([\varepsilon_1^*\varepsilon_2\cdots \varepsilon_{2n-1}^*\varepsilon_{2n}])&= \sum\limits_{\substack{t(\eta)=s(\varepsilon)\\t(\varepsilon)=t(\varepsilon_1)}}[\eta\varepsilon \varepsilon_1^*\varepsilon_2\cdots \varepsilon_{2n-1}^*\varepsilon_{2n}\varepsilon^*\eta^*].
\end{align*}
\end{cor}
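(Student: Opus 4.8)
The plan is to build on the preceding Central Vectors proposition, which has already shown that $S_{0,n}$ is a basis of $H_{n,+}=A_0'\cap A_n$ and $S_{1,n+1}$ is a basis of $H_{n,-}=A_1'\cap A_{n+1}$. Since $\phi_{n,+}$ sends the basis loops of $G_{n,+}$ bijectively onto $S_{0,n}$ (and $\phi_{n,-}$ sends the basis loops of $G_{n,-}$ onto $S_{1,n+1}$), each $\phi_{n,\pm}$ is automatically a linear isomorphism. Hence the only content left is to verify that $\phi_{n,\pm}$ intertwines the $*$-operations and the multiplications of Definition \ref{adjoint}; being bijective $*$-algebra homomorphisms, they will then be $*$-isomorphisms. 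For $n=0$ I would simply note that $\phi_{0,+}(v_+)=\sum_{t(\eta)=v_+}[\eta\eta^*]$ is the minimal central idempotent of $A_0$ supported at $v_+$, so $\phi_{0,+}$ is the obvious identification of $G_{0,+}=\C\V_+$ with the (commutative) center, and similarly for $\phi_{0,-}$.

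The key observation for $n\geq 1$ is that $\phi_{n,+}$ is precisely the operation ``pad the loop $\ell$ on the left by a boundary edge $\eta$ and on the right by $\eta^*$, then sum over all admissible $\eta$,'' and I would show that this padding intertwines both structures by inspection of the multiplication rule of $A_n$. Concretely, for $\ell_1,\ell_2\in G_{n,+}$ I would expand the product $\phi_{n,+}(\ell_1)\phi_{n,+}(\ell_2)=\sum_{\eta,\zeta}[\eta\,\ell_1\,\eta^*]\cdot[\zeta\,\ell_2\,\zeta^*]$ using the $A_n$ rule, and check that the matching condition $(\cdot)_{[n+2,2n+2]}^*=(\cdot)_{[1,n+1]}$ on the padded loops of length $2n+2$ factors as the condition $\eta=\zeta$ on the padding edges together with exactly the $G_{n,+}$-matching condition $(\ell_1)_{[n+1,2n]}^*=(\ell_2)_{[1,n]}$. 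The double sum over padding edges then collapses to a single sum, and the surviving loop is $[\eta\,(\ell_1)_{[1,n]}(\ell_2)_{[n+1,2n]}\,\eta^*]$, which is exactly $\phi_{n,+}(\ell_1\cdot\ell_2)$; here one uses that when the $G$-product is nonzero the product loop is based at $s(\varepsilon_1)$, so the range of $\eta$ agrees on both sides. The $*$-check is the analogous but shorter verification that reversing $[\eta\,\ell\,\eta^*]$ returns $[\eta\,\ell^*\,\eta^*]$, using that $\ell$ is a loop, so $s(\varepsilon_1)=s(\varepsilon_{2n})$ and the summation range is unchanged.

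For $\phi_{n,-}$ I would run the same argument inside $A_{n+1}$, the only difference being that the padding is now by a pair $\eta\varepsilon$ on the left and $\varepsilon^*\eta^*$ on the right, with the extra edge $\varepsilon$ bridging $\star$ through an even vertex up to the base odd vertex $t(\varepsilon)=t(\varepsilon_1)$. I would check that the half-splitting condition on the length-$(2n+4)$ loops now forces both $\eta=\zeta$ and $\varepsilon=\varepsilon'$ on the two connecting edges, again leaving precisely the $G_{n,-}$-matching condition, so that the double sum over the pairs $(\eta,\varepsilon)$ collapses to a single sum reproducing $\phi_{n,-}(\ell_1\cdot\ell_2)$.

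I expect the main obstacle to be purely the index bookkeeping rather than any conceptual difficulty: one must carefully align the half-splitting of the length-$(2n+2)$ loops in $A_n$ (respectively length-$(2n+4)$ in $A_{n+1}$) with the half-splitting of the length-$2n$ loops in $G_{n,\pm}$, and track the starred/unstarred parities through the reversal defining $*$, so that the $A$-matching delta genuinely factors into the padding-edge constraint times the $G$-matching delta. Once that correspondence of matching conditions is set up cleanly, both multiplicativity and $*$-preservation follow immediately, and combined with the bijectivity inherited from the Central Vectors proposition this yields the asserted $*$-isomorphisms $\phi_{n,\pm}$.
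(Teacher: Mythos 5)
Your proposal is correct and follows exactly the route the paper intends: the paper states this as a corollary of the Central Vectors proposition with no further written proof, since the maps $\phi_{n,\pm}$ visibly carry the loop bases of $G_{n,\pm}$ bijectively onto the bases $S_{0,n}$ and $S_{1,n+1}$, and the compatibility with multiplication and $*$ is the routine half-splitting bookkeeping you describe. Your explicit verification that the $A_n$-matching condition factors as the padding-edge constraint $\eta=\zeta$ (and $\varepsilon=\varepsilon'$ in the $-$ case) times the $G_{n,\pm}$-matching condition, together with the base-point observation ensuring the summation ranges agree, is precisely the content the paper leaves implicit.
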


\begin{rem}\label{multiso}
For $n\geq 0$, $\psi_{n,\pm}=\varphi_n|_{H_{n,\pm}}\circ \phi_{n,\pm}\colon G_{n,\pm}\to Q_{n,\pm}$ are isomorphisms.
\end{rem}

It will be helpful to have an explicit Pimsner-Popa basis for $A_1$ over $A_0$:

\begin{prop}[Pimsner-Popa Bases]\label{basis} For each $v_+\in \V_+$, pick a distinguished $\eta_{v_+}$ with $t(\eta_{v_+})=v_+$. Set
\begin{align*}
B_{1}&=\set{ \left(\frac{d\lambda(s(\varepsilon_2))}{\lambda(t(\varepsilon_2))}\right)^{1/2}\sum\limits_{t(\eta)=s(\varepsilon_{1})}[\eta\varepsilon_1\varepsilon_2^*\eta^*]}{[\varepsilon_1\varepsilon_2^*]\in G_{1,+}}\hsp{and}\\
B_{2}&=\set{\left(\frac{d\lambda(s(\varepsilon_2))}{\lambda(t(\varepsilon_2))}\right)^{1/2}[\eta_1\varepsilon_1\varepsilon_2^*\eta_{s(\varepsilon_2)}^*]}{s(\varepsilon_1)\neq s(\varepsilon_2)}.
\end{align*}
Then $B=B_{1}\amalg B_{2}$ is a Pimsner-Popa basis for $A_1$ over $A_0$.
\end{prop}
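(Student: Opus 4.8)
The plan is to verify condition (ii) of Proposition \ref{ppbasis} directly, namely that $x = \sum_{b\in B} b\, E_{A_0}(b^*x)$ for every $x\in A_1$. This is preferable to condition (i), since (ii) only invokes the conditional expectation $E_{A_0}\colon A_1\to A_0$, for which we already have the closed formula of Proposition \ref{conditional} in the case $n=1$, whereas (i) would force us into the basic construction $A_2$ and its explicit Jones projection. Recall from that formula that $E_{A_0}$ annihilates every loop $[\eta_1\varepsilon_1\varepsilon_2^*\eta_2^*]$ unless $\varepsilon_1=\varepsilon_2$, in which case it returns $d^{-1}(\lambda(t(\varepsilon_1))/\lambda(s(\varepsilon_1)))[\eta_1\eta_2^*]$.

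By linearity it suffices to check the identity on a single basis loop $x=[\eta_1'\xi_1\xi_2^*\eta_2'^*]$. Being a genuine loop in $A_1$ forces $t(\xi_1)=t(\xi_2)$, but the sources $s(\xi_1)$ and $s(\xi_2)$ need not agree; this dichotomy is exactly what the two families $B_1$ and $B_2$ (which are plainly disjoint, the former being diagonal sums and the latter single loops with distinguished outgoing edge) are designed to resolve. The heart of the matter is to evaluate $b\,E_{A_0}(b^*x)$ for one $b$, which is a routine application of the multiplication rule in $A_1$ together with the mixed multiplication $A_1\times A_0\to A_1$.

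For $b\in B_1$ indexed by a parallel pair $[\varepsilon_1\varepsilon_2^*]\in G_{1,+}$, multiplying out $b^*x$, applying $E_{A_0}$, and multiplying back by $b$ collapses all internal sums over $\eta$ and leaves $b\,E_{A_0}(b^*x)=\delta_{\varepsilon_1,\xi_1}\delta_{\varepsilon_2,\xi_2}\,x$, \emph{provided} the scalar prefactor $c(\varepsilon_1,\varepsilon_2)^2\,d^{-1}\,\lambda(t(\varepsilon_2))/\lambda(s(\varepsilon_2))$ equals $1$; this is precisely why the normalizing constant is taken to be $c(\varepsilon_1,\varepsilon_2)=(d\,\lambda(s(\varepsilon_2))/\lambda(t(\varepsilon_2)))^{1/2}$. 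Summing over $B_1$ therefore contributes $x$ exactly when $s(\xi_1)=s(\xi_2)$, so that $[\xi_1\xi_2^*]\in G_{1,+}$, and contributes $0$ otherwise. An entirely parallel computation for $b\in B_2$, indexed by triples $(\eta_1,\varepsilon_1,\varepsilon_2)$ with $s(\varepsilon_1)\neq s(\varepsilon_2)$, gives $b\,E_{A_0}(b^*x)=\delta_{\eta_1,\eta_1'}\delta_{\varepsilon_1,\xi_1}\delta_{\varepsilon_2,\xi_2}\,x$ with the same scalar cancellation; here the role of the distinguished edge $\eta_{s(\varepsilon_2)}$ is to pin down the outgoing half of $b$ so that no overcounting occurs. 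Summing over $B_2$ contributes $x$ exactly when $s(\xi_1)\neq s(\xi_2)$ and $0$ otherwise. Hence for every basis loop $x$ precisely one of the two families produces $x$ and the other produces $0$, so their total is $x$, establishing (ii).

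The main obstacle is bookkeeping rather than any genuine difficulty: one must track the matching conditions in the loop multiplication (the Kronecker deltas forcing $\eta=\eta'$ and $\varepsilon_i=\xi_i$) carefully enough to see that the internal $\eta$-sums telescope, and one must verify the single scalar identity $c^2 d^{-1}\lambda(t(\varepsilon_2))/\lambda(s(\varepsilon_2))=1$ that makes the normalizations work out. The conceptually important point, which dictates the two-family structure of $B$, is the source dichotomy $s(\xi_1)=s(\xi_2)$ versus $s(\xi_1)\neq s(\xi_2)$: the diagonal loops are reconstructed by $B_1$ and the off-diagonal loops by $B_2$, with the distinguished-edge convention in $B_2$ being exactly what is needed to hit the off-diagonal loops without redundancy.
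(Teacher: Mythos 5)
Your proposal is correct and follows essentially the same route as the paper: the paper also verifies condition (ii) of Proposition \ref{ppbasis} on a single basis loop $x=[\zeta_1\xi_1\xi_2^*\zeta_2^*]$, splitting into the cases $s(\xi_1)=s(\xi_2)$ and $s(\xi_1)\neq s(\xi_2)$, where in each case the ``wrong'' family is annihilated by the delta functions $\delta_{\xi_i,\varepsilon_i}$ inside $E_{A_0}(b^*x)$ and the matching family telescopes to $x$ via the same scalar cancellation $c^2 d^{-1}\lambda(t(\varepsilon_2))/\lambda(s(\varepsilon_2))=1$. The only cosmetic difference is that the paper carries out the loop-multiplication bookkeeping explicitly rather than summarizing it, so there is nothing to change.
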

\begin{proof}
Suppose $x=[\zeta_1\xi_1\xi_2^* \zeta_2^*]\in A_1$. 
\itt{Case 1} Suppose that $s(\xi_1)=s(\xi_2)$, so $[\xi_1\xi_2^*]\in G_{1,+}$. If $b\in B_{2}$, then
$E_{A_0}(b^*x)=0$ as the formula will have delta functions $\delta_{\xi_i,\varepsilon_i}$ for $i=1,2$.
Hence we have 
{\fontsize{10}{10}{
\begin{align*}
\sum\limits_{b\in B} bE_{A_0}(b^*x) &=\sum\limits_{b\in B_{1}} bE_{A_0}(b^*x)
=\sum\limits_{b\in B_1}\frac{d\lambda(s(\varepsilon_2))}{\lambda(t(\varepsilon_2))}\sum\limits_{\substack{t(\eta)=s(\varepsilon_{1})\\t(\zeta)=s(\varepsilon_{1})}}
[\eta \varepsilon_1\varepsilon_2^* \eta^*] E_{A_0}\big([\zeta\varepsilon_2\varepsilon_1^* \zeta^*]\cdot[\zeta_1\xi_1\xi_2^* \zeta_2^*]\big)\\
&=\sum\limits_{b\in B_1}\frac{d\lambda(s(\varepsilon_2))}{\lambda(t(\varepsilon_2))}\sum\limits_{t(\eta)=s(\varepsilon_{1})} \delta_{\zeta_1,\zeta} \delta_{\xi_1,\varepsilon_1}[\eta \varepsilon_1\varepsilon_2^* \eta^*] E_{A_0}([\zeta\varepsilon_2\xi_2^* \zeta^*_2])\\
&=\sum\limits_{b\in B_1}\frac{d\lambda(s(\varepsilon_2))}{\lambda(t(\varepsilon_2))}\sum\limits_{t(\eta)=s(\xi_{1})} [\eta \xi_1\varepsilon_2^* \eta^*] E_{A_0}([\zeta_1\varepsilon_2\xi_2^* \zeta^*_2])\\
&=\sum\limits_{b\in B_1}\sum\limits_{t(\eta)=s(\xi_{1})} \delta_{\xi_2,\varepsilon_2}[\eta \xi_1\varepsilon_2^* \eta^*] \cdot[\zeta_1 \zeta^*_2]
=[\zeta_1 \xi_1\xi_2^* \zeta_2]=x.
\end{align*}}}
\itt{Case 2}
Suppose that $s(\xi_1)\neq s(\xi_2)$. If $b\in B_{1}$, then similarly,
$E_{A_0}(b^*x)=0$. Hence
{\fontsize{10}{10}{
\begin{align*}
\sum\limits_{b\in B} bE_{A_0}(b^*x) &=\sum\limits_{b\in B_{2}} bE_{A_0}(b^*x)
=\sum\limits_{b\in B_2}\frac{d\lambda(s(\varepsilon_2))}{\lambda(t(\varepsilon_2))}[\eta_1 \varepsilon_1\varepsilon_2^* \eta_{s(\varepsilon_2)}^*] E_{A_0}\big([\eta_{s(\varepsilon_2)}\varepsilon_2\varepsilon_1^* \eta_1^*]\cdot[\zeta_1\xi_1\xi_2^* \zeta_2^*]\big)\\
&= [\zeta_1 \xi_1\xi_2^* \eta_{s(\xi_2)}^*] \cdot[\eta_{s(\xi_2)}\zeta_2^*]=[\zeta_1 \xi_1\xi_2^* \zeta_2]=x.
\end{align*}}}
\end{proof}
\begin{rem}
One could also take
$$
B_{2}=\set{\left(\frac{d\lambda(s(\varepsilon_2))}{m_+(s(\varepsilon_2))\lambda(t(\varepsilon_2))}\right)^{1/2}[\eta_1\varepsilon_1\varepsilon_2^*\eta_2^*]}{s(\varepsilon_1)\neq s(\varepsilon_2)}.
$$
\end{rem}

\begin{cor}[Commutant Conditional Expectations]\label{commutantconditional}
If 
$$x=\sum\limits_{t(\zeta)=s(\xi_1)} [\zeta\xi_1\xi_2^*\cdots \xi_{2n-1}\xi_{2n}^*\zeta^*]\in A_0'\cap A_n,$$ 
the conditional expectation $A_0'\cap A_n\to A_1'\cap A_{n}$ is given by
$$
E_{A_1'}(x)
= 
d^{-1}\delta_{\xi_1,\xi_{2n}}
\left(\frac{\lambda(s(\xi_1))}{\lambda(t(\xi_1))}\right)
\sum\limits_{t(\zeta)=s(\varepsilon); t(\varepsilon)=t(\xi_2)}
[\eta\varepsilon\xi_2^*\xi_3\cdots \xi_{2n-2}^*\xi_{2n-1}\varepsilon^*\eta^*].
$$
\end{cor}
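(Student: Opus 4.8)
The plan is to combine the explicit Pimsner--Popa basis of Proposition \ref{basis} with the commutant conditional-expectation formula of Proposition \ref{sumb}, and then simply read off the delta functions and scalars produced by the loop multiplication. First I would justify the starting formula: since the loop tower $(A_n)$ realizes the strongly Markov inclusion $M_0\subset M_1$ through the trace-preserving isomorphisms $\varphi_n$ of Proposition \ref{basic} (which carry $A_0,A_1$ to $M_0,M_1$ and a Pimsner--Popa basis for $A_1$ over $A_0$ to one for $M_1$ over $M_0$), Proposition \ref{sumb} transfers to give $E_{A_1'}(x)=d^{-2}\sum_{b\in B}bxb^*$. Because every $b\in A_1\subseteq A_n$, the right-hand side lands in $A_1'\cap A_n$, so this is indeed the conditional expectation $A_0'\cap A_n\to A_1'\cap A_n$. (Alternatively, one verifies directly that the claimed formula is $A_1'$-bimodular and $\tr_n$-preserving, exactly as in Proposition \ref{conditional}.) I would then take $B=B_1\amalg B_2$.

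The heart of the proof is computing $bxb^*$ for $x=\sum_{t(\zeta)=s(\xi_1)}[\zeta\xi_1\xi_2^*\cdots\xi_{2n-1}\xi_{2n}^*\zeta^*]$. For $b\in B_1$, indexed by a length-two loop $[\varepsilon_1\varepsilon_2^*]$ with $s(\varepsilon_1)=s(\varepsilon_2)$, the mixed loop multiplication forces $\varepsilon_2=\xi_1$ when forming $bx$ and $\varepsilon_2=\xi_{2n}$ when forming $(bx)b^*$; together these impose $\delta_{\xi_1,\xi_{2n}}$ and replace the two outer edges $\xi_1=\xi_{2n}$ by $\varepsilon_1$, with $\varepsilon_1$ ranging over edges satisfying $s(\varepsilon_1)=s(\xi_1)$ and $t(\varepsilon_1)=t(\xi_1)$. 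For $b\in B_2$, indexed by a non-loop $[\varepsilon_1\varepsilon_2^*]$ with $s(\varepsilon_1)\neq s(\varepsilon_2)$, the same edge-matching analysis again forces $\varepsilon_2=\xi_1=\xi_{2n}$ and replaces the outer edges by $\varepsilon_1$, now with $s(\varepsilon_1)\neq s(\xi_1)$ (and still $t(\varepsilon_1)=t(\xi_1)$). In both families the normalization scalar squares to $\tfrac{d\lambda(s(\varepsilon_2))}{\lambda(t(\varepsilon_2))}=\tfrac{d\lambda(s(\xi_1))}{\lambda(t(\xi_1))}$, using $\varepsilon_2=\xi_1$.

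The key point, and the step most likely to be mishandled, is that $B_1$ and $B_2$ must be combined rather than treated as one giving a vanishing contribution: $B_1$ supplies exactly the terms with $s(\varepsilon_1)=s(\xi_1)$ and $B_2$ supplies exactly those with $s(\varepsilon_1)\neq s(\xi_1)$, so their union assembles into the single unrestricted sum over all edges $\varepsilon$ with $t(\varepsilon)=t(\xi_1)=t(\xi_2)$ (and $t(\zeta)=s(\varepsilon)$) that appears in the statement. Pulling out the common scalar and dividing by $d^2$ turns $\tfrac{d\lambda(s(\xi_1))}{\lambda(t(\xi_1))}$ into $d^{-1}\tfrac{\lambda(s(\xi_1))}{\lambda(t(\xi_1))}$, yielding precisely the claimed formula with its factor $\delta_{\xi_1,\xi_{2n}}$.

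The only obstacle is the combinatorial bookkeeping of the edge-matching in the loop products and the verification that the two families of basis elements reassemble into one unrestricted sum over the new edge $\varepsilon$; no conceptual input beyond Propositions \ref{sumb} and \ref{basis} is required, and as usual only one parity of $n$ needs to be written out since the other is entirely analogous.
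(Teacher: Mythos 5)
Your proposal is correct and follows essentially the same route as the paper: apply Proposition \ref{sumb} with the explicit Pimsner--Popa basis $B=B_1\amalg B_2$ of Proposition \ref{basis}, compute the loop products $bxb^*$ separately over $B_1$ and $B_2$ to get the constrained sums $s(\varepsilon)=s(\xi_1)$ and $s(\varepsilon)\neq s(\xi_1)$ respectively (each with the factor $\delta_{\xi_1,\xi_{2n}}\,d\,\lambda(s(\xi_1))/\lambda(t(\xi_1))$), and recombine them into the single unrestricted sum before dividing by $d^2$. This is exactly the paper's argument, including the key bookkeeping point about how the two basis families reassemble.
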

\begin{proof}
Let $B$ be as in \ref{basis}. By \ref{sumb}, we have
$$
d^2E_{A_1'}(x)=\sum\limits_{b\in B} bxb^* =  \sum\limits_{b\in B_1} bxb^*+ \sum\limits_{b\in B_2} bxb^*.
$$
We treat each sum separately:
{\fontsize{10}{10}{
\begin{align*}
\sum\limits_{b\in B_1} bxb^*
&= \sum\limits_{b\in B_1}\left(\frac{d\lambda(s(\varepsilon_2))}{\lambda(t(\varepsilon_2))}\right)\sum\limits_{\substack{t(\eta)=s(\varepsilon_{1})=t(\kappa)\\t(\zeta)=s(\xi_1)}}[\eta\varepsilon_1\varepsilon_2^*\eta^*]\cdot [\zeta\xi_1\xi_2^*\cdots \xi_{2n-1}\xi_{2n}^*\zeta^*]\cdot [\kappa\varepsilon_2\varepsilon_1^*\kappa^*]\\
&= d\sum\limits_{\substack{s(\varepsilon)=s(\varepsilon_2)\\t(\varepsilon)=t(\xi_2)}}\left(\frac{\lambda(s(\varepsilon_2))}{\lambda(t(\varepsilon_2))}\right)\sum\limits_{\substack{t(\eta)=s(\varepsilon_{1})=t(\kappa)\\t(\zeta)=s(\xi_1)}}
\delta_{\eta,\zeta}\delta_{\zeta,\kappa} \delta_{\varepsilon_2,\xi_1}\delta_{\varepsilon_2,\xi_{2n}}
[\eta\varepsilon\xi_2^*\cdots \xi_{2n-1}\varepsilon^*\kappa^*]\\
&= d\sum\limits_{\substack{t(\eta)=s(\varepsilon)=s(\xi_1)\\ t(\varepsilon)=t(\xi_2)}}\left(\frac{\lambda(s(\xi_1))}{\lambda(t(\xi_1))}\right)
\delta_{\xi_1,\xi_{2n}}
[\eta\varepsilon\xi_2^*\cdots \xi_{2n-1}\varepsilon^*\eta^*]
\end{align*}}}
Similarly, we have
$$
\sum\limits_{b\in B_2} bxb^* = d\sum\limits_{\substack{t(\eta)=s(\varepsilon)\neq s(\xi_1)\\ t(\varepsilon)=t(\xi_2)}}\left(\frac{\lambda(s(\xi_1))}{\lambda(t(\xi_1))}\right)
\delta_{\xi_1,\xi_{2n}}
[\eta\varepsilon\xi_2^*\cdots \xi_{2n-1}\varepsilon^*\eta^*].
$$
Putting these two together, we get the desired formula for $E_{A_1'}$.
\end{proof}

\subsection{The bipartite graph planar algebra and the isomorphism}

We refer the reader to \cite{MR1865703} for the full definition of the planar algebra of a bipartite graph.

Let $G_\bullet$ be the planar algebra of the bipartite graph $\Gamma$ with spin vector $\lambda$ as in Subsections \ref{loop} and \ref{towerofloops}. We briefly recall the action of tangles on the $G_{n,\pm}$, and we calculate some necessary examples.

A state $\sigma$ of a tangle $T$ is a way of assigning the regions and strings of $T$ with compatible vertices and edges of $\Gamma$ respectively, i.e., if a string $S$ of $T$ partitions the unshaded region $R_+$ from the shaded region $R_-$, then for $\sigma(S)\in \E$, $s(\sigma(S))=\sigma(R_+)\in \V_+$ and $t(\sigma(S))=\sigma(R_-)\in \V_-$.

Define the output loop $\ell_\sigma$ as the loop obtained by reading clockwise around the outer boundary of $T$ once it has been labeled by $\sigma$.

Suppose now that $T$ has $n$ input disks, and $\ell=\ell_1\otimes\cdots\otimes\ell_n$ is a simple tensor of loops where $\ell_i$ is a loop in $G_{n_i,\pm_i}$. Then the action of $T$ on $\ell$ is given by
$$
T(\ell)=\sum\limits_{\text{states }\sigma} c(\sigma,\ell) \ell_\sigma,
$$
where $c(\sigma,\ell)$ is a correction factor defined as follows:
\be
\item[(1)] First, label the regions and strings of $T$ adjacent to the input disks with the edges and vertices which compose the $\ell_i$'s. If the labeling contradicts $\sigma$, then $c(\sigma,\ell)=0$.
\item[(2)] If the labels agree, put the tangle in a standard form similar to Section \ref{padefn}, where the only difference is that the half the strings emanate from the top of the input rectangles, and half the strings emanate down, but the $*$ is still on the left side. Let $E(T)$ be the set of local extrema of the strings of the standard form of the tangle. For each $e\in E(T)$, let $\conv(e)$ be the vertex assigned by $\sigma$ to the convex region of the extrema, and let $\conc(e)$ be the vertex assigned to the concave region. Set
$$
k_e=\sqrt{\frac{\lambda(\conv(e))}{\lambda(\conc(e))}}.
$$
Below is an example of an extrema $e$ on a string $S$ with $\sigma(S)=\varepsilon$, connecting vertices $w,v$:
$$
\comment{\begin{tikzpicture}[rectangular]
	\node at (0,0){w};
	\draw[ultra thick] (.2,0)--(1.8,0);
	\node at (2,0){v};
	\node at (1,.3){$\varepsilon$};
\end{tikzpicture}
\hspace{1in}}
\begin{tikzpicture}[rectangular]
	\draw (-2,-.75) .. controls ++(90:1.5cm) and ++(90:1.5cm) ..  (2,-.75);	
	\node at (0,-.5){convex};
	\node at (0,0){w};
	\node at (0,1.25){concave};
	\node at (0,.75){v};
	\node at (-2,0){$\varepsilon$};
\end{tikzpicture}\longrightarrow k_e=\sqrt{\frac{\lambda(w)}{\lambda(v)}}.
$$
Note that $\conv(e)$ may be in either $\V_+$ or $\V_-$. Finally, set
$$
c(\sigma,\ell)=\prod\limits_{e\in E(T)} k_e.
$$
\ee
The $*$-structure on the bipartite graph planar algebra is given as follows: if $T$, $\ell$ are as above, then
$$
T(\ell_1^*\otimes\cdots\otimes \ell_n^*)=T^*(\ell_1\otimes\cdots\otimes \ell_n)^*
$$
where $T^*$ is the mirror image of $T$, and the adjoint of a loop is the loop traversed backwards as in Definition \ref{adjoint}.

\begin{rem}
Closed, contractible strings are traded for a multiplicative factor of $d$ as $\lambda$ is a Frobenius-Perron eigenvector (see Definition \ref{fp}).
\end{rem}

\begin{rem}\label{commutantinclude}
Note from Corollary \ref{gpaspaceiso} that there is a natural inclusion identification $G_{n,-}\to G_{n+1,+}$ given by
$$
[\varepsilon_1^*\varepsilon_2\cdots\varepsilon_{2n-1}^*\varepsilon_{2n}]\longmapsto\sum\limits_{t(\varepsilon)=s(\varepsilon_1)}[\varepsilon\varepsilon_1^*\varepsilon_2\cdots\varepsilon_{2n-1}^*\varepsilon_{2n}\varepsilon^*].
$$
\end{rem}

\begin{exs}\label{gpa}
\item[(0)] If $\ell_1,\ell_2\in G_{n,\pm}$, then $\D\ell_1\cdot\ell_2=\multiplication{\ell_1}{\ell_2}\hs$, the shading depending on $n,\pm$.
\item[(1)] For $n\in\N$ odd, 
$$
\jonesprojection= 
{\fontsize{10}{10}{
\sum\limits_{\vec{i}}\frac{[ \lambda(t(\varepsilon_{i_n}))\lambda(t(\varepsilon_{i_{n+1}}))]^{1/2}}{\lambda(s(\varepsilon_{i_n}))} [\varepsilon_{i_1}\cdots \varepsilon_{i_{n-1}}^* \varepsilon_{i_n}\varepsilon_{i_n}^*\varepsilon_{i_{n+1}}\varepsilon_{i_{n+1}}^*\varepsilon_{i_{n-1}}\cdots\varepsilon_{i_1}^*]},}
$$
where the sum is taken over all vectors  $\vec{i}=(i_1,i_2,\dots,i_{n+1})$ such that 
$$
[\varepsilon_{i_1}\varepsilon_{i_2}^*\cdots \varepsilon_{i_{n-1}}^* \varepsilon_{i_n}\varepsilon_{i_n}^*\varepsilon_{i_{n+1}}\varepsilon_{i_{n+1}}^*\varepsilon_{i_{n-1}}\cdots\varepsilon_{i_{2}}\varepsilon_{i_1}^*]\in G_{n+1,+}.
$$
There is a similar formula for $n$ even. (Compare with Definition \ref{jonesproj}.)
\item[(2)] Suppose $\ell=[\varepsilon_1\varepsilon_2^*\cdots\varepsilon_{2n-1}\varepsilon_{2n}^*]\in G_{n,+}$.
\be
\item[(i)]
If $n$ is even, then 
$$\conditional{\ell}=\delta_{\varepsilon_n,\varepsilon_{n+1}}\frac{\lambda(s(\varepsilon_n))}{\lambda(t(\varepsilon_n))}[\varepsilon_1\varepsilon_2^*\cdots \varepsilon_{n-1}\varepsilon_{n+2}^*\varepsilon_{2n-1}\cdots\varepsilon_{2n}^*],
$$
with a similar formula for $n$ odd. (Compare with Proposition \ref{conditional}.)
\item[(ii)] If $n$ is even, then
$$
\inclusion{\ell}=\sum\limits_{s(\varepsilon)=s(\varepsilon_n)} [\varepsilon_1\varepsilon_2^*\cdots\varepsilon_n^*\varepsilon\varepsilon^*\varepsilon_{n+1} \cdots\varepsilon_{2n-1}\varepsilon_{2n}^*],
$$
with a similar formula for $n$ odd. (Compare with Definition \ref{inclusions}.)
\item[(iii)] 
$\D
\commutant{\ell}=\delta_{\varepsilon_1,\varepsilon_{2n}}\frac{\lambda(s(\varepsilon_1))}{\lambda(t(\varepsilon_1))}[\varepsilon_2^*\varepsilon_3\cdots \varepsilon_{2n-2}^*\varepsilon_{2n-1}].
$\\
(Compare with Proposition \ref{commutantconditional} and Remark \ref{commutantinclude}.)
\ee
\item[(3)] 
If $\ell=[\varepsilon_1^*\varepsilon_2\cdots\varepsilon_{2n-1}^*\varepsilon_{2n}]\in G_{n,-}$, then
$$
\minusinclusion{\ell}=\sum\limits_{t(\varepsilon)=s(\varepsilon_1)}[\varepsilon\varepsilon_1^*\varepsilon_2\cdots\varepsilon_{2n-1}^*\varepsilon_{2n}\varepsilon^*],
$$
which may be identified with $\ell\in G_{n+1,+}$ by Remark \ref{commutantinclude}.
\end{exs}

\begin{thm}
The canonical relative commutant planar $*$-algebra $P_\bullet$ is isomorphic to the bipartite graph planar $*$-algebra $G_\bullet$ of the Bratteli diagram $\Gamma$ for the inclusion $M_0\subset M_1$.
\end{thm}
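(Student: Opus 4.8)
The plan is to write down the isomorphism explicitly using the identifications already constructed, and then to reduce everything to the uniqueness assertion of Theorem \ref{canonical}. For each $n$ and each sign I would set
$$
\Psi_{n,+}=\theta_n^{-1}\circ\psi_{n,+}\colon G_{n,+}\to P_{n,+},\qquad \Psi_{n,-}=\theta_{n+1}^{-1}\circ\psi_{n,-}\colon G_{n,-}\to P_{n,-},
$$
where $\psi_{n,\pm}\colon G_{n,\pm}\to Q_{n,\pm}$ are the $*$-algebra isomorphisms of Remark \ref{multiso} (built from $\phi_{n,\pm}$ of Corollary \ref{gpaspaceiso} and $\varphi_n$ of Proposition \ref{basic}), and $\theta_n,\theta_{n+1}$ identify $P_{n,\pm}$ with the relative commutants $Q_{n,\pm}$ of Definition \ref{commutant}. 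Since the multiplication and $*$ on $P_\bullet$ are by construction those of the relative commutants (Remark \ref{multiplication2} and Proposition \ref{starstructure}), each $\theta$ is a graded $*$-algebra isomorphism, so each $\Psi_{n,\pm}$ is a $*$-algebra isomorphism of graded pieces. Both planar algebras carry modulus $d=[M_1\colon M_0]^{1/2}=\sqrt{\|\Lambda^T\Lambda\|}$. What remains is to show that the family $\Psi=\{\Psi_{n,\pm}\}$ intertwines the action of all planar tangles.

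Here I would invoke uniqueness rather than checking arbitrary tangles directly. By Theorem \ref{canonical} (whose uniqueness rests on Lemma \ref{technical}), a planar $*$-algebra supported on the spaces $P_{n,\pm}$, with the prescribed multiplication and $*$, is completely determined by the values of the four families of generators in conditions (1)--(3): the Jones projections $E_n$, the conditional-expectation (cap) tangle, the two inclusion (cup) tangles, and the commutant conditional expectation $\gamma^+$. Indeed Lemma \ref{technical} expresses every $\alpha_j$, every $\beta_j$, and both rotations by one in terms of these generators together with multiplication and $*$, so these generate the full planar operad action. I would therefore transport the structure of $G_\bullet$ to the spaces $P_{n,\pm}$ along $\Psi$, obtaining a planar $*$-algebra $\widetilde P_\bullet$ on the same underlying graded vector spaces as $P_\bullet$, and verify that $\widetilde P_\bullet$ satisfies conditions (0)--(3) of Theorem \ref{canonical}; uniqueness then forces $\widetilde P_\bullet=P_\bullet$, whence $\Psi\colon G_\bullet\to P_\bullet$ is an isomorphism of planar $*$-algebras.

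The verification is a term-by-term comparison, and each comparison has been prepared in advance by the examples of the previous subsection. The $*$-structure matches by Proposition \ref{starstructure} and Definition \ref{adjoint}; multiplication matches by Remark \ref{multiplication2} and Example \ref{gpa}(0); the Jones projection $E_n$ corresponds under $\varphi$ to the loop element $F_n$ of Definition \ref{jonesproj} computed in Example \ref{gpa}(1), matching via Proposition \ref{basic}; the cap (the conditional expectation $dE_{M_{n-1}}$) matches Example \ref{gpa}(2)(i) via Proposition \ref{conditional}; the two cups (inclusions) match Example \ref{gpa}(2)(ii) and Example \ref{gpa}(3) via Definition \ref{inclusions}; and the commutant conditional expectation $dE_{M_1'}$ matches Example \ref{gpa}(2)(iii) via Corollary \ref{commutantconditional} together with Proposition \ref{sumb}. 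In each case the $G_\bullet$-formula, pulled back through $\phi_{n,\pm}$ and $\varphi_n$, is literally the $P_\bullet$-formula, so $\Psi$ intertwines every generator.

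The main obstacle is the scalar bookkeeping. In $G_\bullet$ the action of a tangle accumulates the Frobenius--Perron correction factors $k_e=\sqrt{\lambda(\conv(e))/\lambda(\conc(e))}$ at the local extrema, whereas in $P_\bullet$ the normalizations surface as powers of $d$ together with the basis sums of Proposition \ref{sumb}; matching them demands that the explicit Pimsner--Popa basis for $A_1$ over $A_0$ of Proposition \ref{basis} reproduce exactly the $\lambda$-ratios, and that the normalizations of $F_n$ (Definition \ref{jonesproj}) and of the multistep projections line up precisely. Because these are exactly the scalar identities that the preceding examples were computed to confirm, the remaining work is organizational: assemble the matches on the generators of Lemma \ref{technical} and appeal to the uniqueness of Theorem \ref{canonical}.
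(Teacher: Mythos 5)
Your proposal is correct and follows essentially the same route as the paper: the paper's proof likewise transports the bipartite graph planar algebra structure to the spaces $P_{n,\pm}$ along the composite isomorphisms $\theta_n^{-1}\circ\psi_{n,+}$ and $\theta_{n+1}^{-1}\circ\psi_{n,-}$, and then concludes by combining Lemma \ref{technical} (the uniqueness mechanism behind Theorem \ref{canonical}) with the generator-by-generator matches recorded in Examples \ref{gpa}. Your write-up simply makes explicit the term-by-term comparisons (Jones projections, caps, cups, and the commutant expectation against Definition \ref{jonesproj}, Proposition \ref{conditional}, Definition \ref{inclusions}, and Corollary \ref{commutantconditional}) that the paper leaves as citations.
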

\begin{proof}
Under the $*$-algebra isomorphisms $\theta_n^{-1}\circ \psi_{n,+}\colon G_{n,+}\to P_{n,+}$ and $\theta_{n+1}^{-1}\circ\psi_{n,-}\colon G_{n,-}\to P_{n,-}$, we may transport the graph planar algebra structure of $G_\bullet$ to the algebras $P_{n,\pm}$. The result now follows by Lemma \ref{technical} and Examples \ref{gpa}.
\end{proof}

\section{The Embedding Theorem}\label{sec:embed}
Let $Q_\bullet$ be a finite depth subfactor planar algebra of modulus $d$. Pick $r\geq 0$ minimal such that $Q_{2r,+}\subset Q_{2r+1,+}\subset (Q_{2r+2,+},e_{2r+1})$ is standard (with the usual trace), and set $s=2r$ (this is possible if and only if $Q_\bullet$ is of finite depth). In fact, $Q_{k,+}\subset Q_{k+1,+}\subset (Q_{k+2,+},e_{k+1})$ is standard for all $k\geq s$. For $n\geq 0$, set $M_n=Q_{s+n,+}$, $F_{n+1}=E_{s+n+1}$ (shifted Jones projections), 
\begin{align*}
P_{n,+}&=M_0'\cap M_n = Q_{s,+}'\cap Q_{s+n,+},\hsp{and}\\
P_{n,-}&=M_1'\cap M_{n+1} = Q_{s+1,+}'\cap Q_{s+n+1,+}.
\end{align*}
Then $P_\bullet$ has a canonical planar algebra structure (we suppress the isomorphisms $\theta_n$ with the tensor products of $Q_{s+1,+}$ over $Q_{s,+}$).

\begin{thm}\label{embed}
Define $\Phi\colon Q_\bullet\to P_\bullet$ by adding $s=2r$ strings to the left for $x\in Q_{n,+}$ and adding $s+1$ strings to the left for $x\in Q_{n,-}$.
$$\begin{tikzpicture}[baseline]
	\clip (1.5,1.5) --(-1.5,1.5) -- (-1.5,-1.5) -- (1.5,-1.5);
	\filldraw[shaded] (-.8,1.5)--(-.8,-1.5)--(-.4,-1.5)--(-.4,1.5);
	\draw  (.8,-1.5)--(.8,1.5);
	\draw [ultra thick] (1.5,1.5) --(-1.5,1.5) -- (-1.5,-1.5) -- (1.5,-1.5)--(1.5,1.5);
	\filldraw[thick, unshaded] (1,.5) --(-1,.5) -- (-1,-.5) -- (1,-.5)--(1,.5);
	\node at (0,-1.1) {$\underbrace{\qquad \qquad}_{n}$};
	\node at (.2,1) {$\cdots$};
	\node at (.2,-.75) {$\cdots$};
	\node at (0,0) {$x$};
\end{tikzpicture}\longmapsto
\begin{tikzpicture}[baseline]
	\clip (1.5,1.5) --(-3.5,1.5) -- (-3.5,-1.5) -- (1.5,-1.5);
	\filldraw[shaded] (-3,1.5)--(-3,-1.5)--(-2.6,-1.5)--(-2.6,1.5);
	\filldraw[shaded] (-1.8,1.5)--(-1.8,-1.5)--(-1.4,-1.5)--(-1.4,1.5);	
	\filldraw[shaded] (-.8,1.5)--(-.8,-1.5)--(-.4,-1.5)--(-.4,1.5);
	\draw  (.8,-1.5)--(.8,1.5);
	\draw [ultra thick] (1.5,1.5) --(-3.5,1.5) -- (-3.5,-1.5) -- (1.5,-1.5)--(1.5,1.5);
	\filldraw[thick, unshaded] (1,.5) --(-1,.5) -- (-1,-.5) -- (1,-.5)--(1,.5);
	\node at (-2.2,-1.1) {$\underbrace{\qquad \qquad}_{s=2r}$};
	\node at (0,-1.1) {$\underbrace{\qquad \qquad}_{n}$};
	\node at (.2,1) {$\cdots$};
	\node at (.2,-.75) {$\cdots$};
	\node at (-2.2,0) {$\cdots$};
	\node at (0,0) {$x$};
\end{tikzpicture}$$
Then $\Phi$ is an inclusion of planar $*$-algebras.
\end{thm}
\begin{proof} We use Lemma \ref{technical}. Note that $\Phi(x^*)=\Phi(x)^*$ and $\Phi(xy)=\Phi(x) \Phi(y)$ for all $x,y\in Q_{n,\pm}$.
\item[(1)] Since $\Phi(E_j)=E_{s+j}=F_j$ for all $j\in\N$, we have $\Phi(E_j  x)=F_j\Phi(x)$ and $\Phi(x E_j)=\Phi(x) F_j$ for all $x\in Q_{n,\pm}$ and all $j\in\N$.
\item[(2)] Note that
\be
\item[(i)] For $n\in \N$, $\Phi(E_{Q_{n-1,+}}(x))= E_{P_{n-1,+}}(\Phi(x))$ since 
$$E_{Q_{s+n-1,+}}|_{Q_{s,+}'\cap Q_{s+n,+}} = E_{Q_{s+n-1,+}}|_{P_{n,+}}=E_{P_{n-1,+}}$$ 
(since $Q_{s,+}\subset Q_{s+n-1,+}$, we have that $E_{Q_{s+n-1,+}}$ preserves $Q_{s,+}$-central vectors as it is $Q_{s+n-1,+}$-bilinear).
\item[(ii)] $\Phi( \beta_{n+1}(x))= \beta_{n+1}(\Phi(x))$ for all $x\in Q_{n,+}$ since the inclusion $P_{n,+}\to P_{n+1,+}$ is the restriction of the inclusion $Q_{s+n,+}\to Q_{s+n+1,+}$.
\item[(iii)] Let $B=\{b\}$ be a Pimsner-Popa basis for $M_1= Q_{s+1,+}$ over $M_{0}=Q_{s,+}$. Since each $b\in B$ is an $(s+1,+)$-box in $Q_{s+1,+}$, 
$$\frac{1}{d}
\sum\limits_{b\in B}\begin{tikzpicture}[rectangular]
	\clip  (-2.25,1.8)--(1, 1.8) -- (1,-1.8) -- (-2.25,-1.8) -- (-2.25,1.8); 
	\filldraw[shaded] (-1.8,3)--(-1.8,-3)--(-1.4,-3)--(-1.4,3);
	\filldraw[shaded] (-.6,3)--(-.6,-3)--(-.2,-3)--(-.2,3);
	\filldraw[shaded] (.2,3)--(.2,.5) arc (-180:0:.3cm) --(.8,2) arc (180:0:.3cm) --(1.4,-2) arc (0:-180:.3cm) --(.8,-.5) arc (0:180:.3cm) -- (.2,-3) -- (1.8, -3) -- (1.8,3);
	\draw (3,3)--(3,-3);
	\draw [ultra thick] (3.7,2.5) --(-2.5,2.5) -- (-2.5,-2.5) -- (3.7,-2.5)--(3.7,2.5);
	\draw[thick, unshaded] (-2, 1.5)--(.4, 1.5) --(.4,.5) -- (-2,.5)--(-2,1.5);
	\draw[thick, unshaded] (-2, -1.5)--(.4, -1.5) --(.4,-.5) -- (-2,-.5)--(-2,-1.5);
	\draw[thick, unshaded] (1.2,.5)--(3.2,.5) --(3.2,-.5) -- (1.2,-.5)--(1.2,.5);
	\draw[dashed] (-2.25,1.8)--(1, 1.8) -- (1,-1.8) -- (-2.25,-1.8) -- (-2.25,1.8); 
	\node at (-.8,1){$b$};
	\node at (-.8,-1){$b^*$};
	\node at (2.2,0){$x$};
	\node at (-.95,2.2) {$\cdots$};
	\node at (-.95,-2.2) {$\cdots$};
	\node at (-.95,0) {$\cdots$};
	\node at (2.4,1.5) {$\cdots$};
	\node at (2.4,-1.5) {$\cdots$};
\end{tikzpicture}=\sum\limits_{b\in B} be_{s+1}b^* = 1_{P_{s+2}} = \begin{tikzpicture}[rectangular]
	\clip (-2.25,1.8)--(.8, 1.8) -- (.8,-1.8) -- (-2.25,-1.8) -- (-2.25,1.8); 
	\filldraw[shaded] (-1.8,3)--(-1.8,-3)--(-1.5,-3)--(-1.5,3);
	\filldraw[shaded] (-.4,3)--(-.4,-3)--(-.1,-3)--(-.1,3);
	\filldraw[shaded] (.2,3)--(.2,-3) --(1.6,-3)--(1.6,3);
	\filldraw[unshaded] (.6,2) arc (180:0:.3cm) --(1.2,-2) arc (0:-180:.3cm) --(.6,2);	\draw (2.8,3)--(2.8,-3);
	\draw [ultra thick] (3.5,2.5) --(-2.5,2.5) -- (-2.5,-2.5) -- (3.5,-2.5)--(3.5,2.5);
	\draw[unshaded] (1,.5)--(3,.5) --(3,-.5) -- (1,-.5)--(1,.5);
	\draw[dashed] (-2.25,1.8)--(.8, 1.8) -- (.8,-1.8) -- (-2.25,-1.8) -- (-2.25,1.8); 
	\node at (-.8,.5) {$\underbrace{\qquad \qquad}_{s+1\hs}$};	
	\node at (2,0){$x$};
	\node at (-.95,-.5) {$\cdots$};
	\node at (2.2,1.5) {$\cdots$};
	\node at (2.2,-1.5) {$\cdots$};
\end{tikzpicture}\,.
$$
Then by Proposition \ref{sumb} and Theorem \ref{canonical}, for all $x\in Q_{n,+}$,
\ee
\begin{align*}
\gamma_n^+(\Phi(x))&=\frac{1}{d}\sum\limits_{b\in B} b\Phi(x)b^*=\frac{1}{d}
\sum\limits_{b\in B}\begin{tikzpicture}[rectangular]
	\clip (2.5,2.5) --(-2.5,2.5) -- (-2.5,-2.5) -- (2.5,-2.5);
	\filldraw[shaded] (-1.8,3)--(-1.8,-3)--(-1.4,-3)--(-1.4,3);
	\filldraw[shaded] (-.6,3)--(-.6,-3)--(-.2,-3)--(-.2,3);
	\filldraw[shaded] (.2,3)--(.2,-3)--(.6,-3)--(.6,3);
	\draw (1.8,3)--(1.8,-3);
	\draw [ultra thick] (2.5,2.5) --(-2.5,2.5) -- (-2.5,-2.5) -- (2.5,-2.5)--(2.5,2.5);
	\draw[thick, unshaded] (-2, 2)--(.4, 2) --(.4,1) -- (-2,1)--(-2,2);
	\draw[thick, unshaded] (-2, -2)--(.4, -2) --(.4,-1) -- (-2,-1)--(-2,-2);
	\draw[thick, unshaded] (0,.5)--(2,.5) --(2,-.5) -- (0,-.5)--(0,.5);
	\node at (-1,.5) {$\underbrace{\qquad \qquad}_{s}$};	
	\node at (1,-1) {$\underbrace{\qquad \qquad}_{n}$};
	\node at (-.8,1.5){$b$};
	\node at (-.8,-1.5){$b^*$};
	\node at (1,0){$x$};
	\node at (-.95,2.2) {$\cdots$};
	\node at (-.95,-2.2) {$\cdots$};
	\node at (-.95,-.5) {$\cdots$};
	\node at (1.2,1.5) {$\cdots$};
	\node at (1.2,-2) {$\cdots$};
\end{tikzpicture}\\
&=\frac{1}{d}\sum\limits_{b\in B}\begin{tikzpicture}[rectangular]
	\clip (3.7,2.5) --(-2.5,2.5) -- (-2.5,-2.5) -- (3.7,-2.5);
	\filldraw[shaded] (-1.8,3)--(-1.8,-3)--(-1.4,-3)--(-1.4,3);
	\filldraw[shaded] (-.6,3)--(-.6,-3)--(-.2,-3)--(-.2,3);
	\filldraw[shaded] (.2,3)--(.2,.5) arc (-180:0:.3cm) --(.8,2) arc (180:0:.3cm) --(1.4,-2) arc (0:-180:.3cm) --(.8,-.5) arc (0:180:.3cm) -- (.2,-3) -- (1.8, -3) -- (1.8,3);
	\draw (3,3)--(3,-3);
	\draw [ultra thick] (3.7,2.5) --(-2.5,2.5) -- (-2.5,-2.5) -- (3.7,-2.5)--(3.7,2.5);
	\draw[thick, unshaded] (-2, 1.5)--(.4, 1.5) --(.4,.5) -- (-2,.5)--(-2,1.5);
	\draw[thick, unshaded] (-2, -1.5)--(.4, -1.5) --(.4,-.5) -- (-2,-.5)--(-2,-1.5);
	\draw[thick, unshaded] (1.2,.5)--(3.2,.5) --(3.2,-.5) -- (1.2,-.5)--(1.2,.5);
	\draw[dashed] (-2.25,1.8)--(1, 1.8) -- (1,-1.8) -- (-2.25,-1.8) -- (-2.25,1.8); 
	\node at (-.8,1){$b$};
	\node at (-.8,-1){$b^*$};
	\node at (2.2,0){$x$};
	\node at (-.95,2.2) {$\cdots$};
	\node at (-.95,-2.2) {$\cdots$};
	\node at (-.95,0) {$\cdots$};
	\node at (2.4,1.5) {$\cdots$};
	\node at (2.4,-1.5) {$\cdots$};
\end{tikzpicture}
=\begin{tikzpicture}[rectangular]
	\clip (3.5,2.5) --(-2.5,2.5) -- (-2.5,-2.5) -- (3.5,-2.5);
	\filldraw[shaded] (-1.8,3)--(-1.8,-3)--(-1.5,-3)--(-1.5,3);
	\filldraw[shaded] (-.4,3)--(-.4,-3)--(-.1,-3)--(-.1,3);
	\filldraw[shaded] (.2,3)--(.2,-3) --(1.6,-3)--(1.6,3);
	\filldraw[unshaded] (.6,2) arc (180:0:.3cm) --(1.2,-2) arc (0:-180:.3cm) --(.6,2);	\draw (2.8,3)--(2.8,-3);
	\draw [ultra thick] (3.5,2.5) --(-2.5,2.5) -- (-2.5,-2.5) -- (3.5,-2.5)--(3.5,2.5);
	\draw[thick, unshaded] (1,.5)--(3,.5) --(3,-.5) -- (1,-.5)--(1,.5);
	\draw[dashed] (-2.25,1.8)--(.8, 1.8) -- (.8,-1.8) -- (-2.25,-1.8) -- (-2.25,1.8); 
	\node at (-.8,.5) {$\underbrace{\qquad \qquad}_{s+1\hs}$};	
	\node at (2,0){$x$};
	\node at (-.95,-.5) {$\cdots$};
	\node at (2.2,1.5) {$\cdots$};
	\node at (2.2,-1.5) {$\cdots$};
\end{tikzpicture}
=\Phi(\gamma_n^+(x)).
\end{align*}

\item[(3)] The inclusion $i^-_n\colon P_{n,-}\to P_{n+1,+}$ is the identity by the canonical structure of the relative commutant planar algebra. If $x\in Q_{n,-}$, then we have
$$
i^-_n(\Phi(x))=\Phi(x)=\begin{tikzpicture}[rectangular]
	\clip (3,1.5) --(-2.5,1.5) -- (-2.5,-1.5) -- (3,-1.5);
	\filldraw[shaded] (-1.8,3)--(-1.8,-3)--(-1.5,-3)--(-1.5,3);
	\filldraw[shaded] (-.4,3)--(-.4,-3)--(-.1,-3)--(-.1,3);
	\filldraw[shaded] (.2,3)--(.2,-3) --(.7,-3)--(.7,3);
	\draw [ultra thick] (3,1.5) --(-2.5,1.5) -- (-2.5,-1.5) -- (3,-1.5)--(3,1.5);
	\draw (2.3,3)--(2.3,-3);
	\draw[thick, unshaded] (.5,.5)--(2.5,.5) --(2.5,-.5) -- (.5,-.5)--(.5,.5);
	\node at (-.8,-.5) {$\underbrace{\qquad \qquad}_{s+1\hs}$};	
	\node at (1.5,0){$x$};
	\node at (-.95,.5) {$\cdots$};
	\node at (1.5,1) {$\cdots$};
	\node at (1.5,-1) {$\cdots$};
\end{tikzpicture}=\Phi(i^-_n(x)).
$$
\end{proof}

\begin{cor}
Let $N\subset M$ be a finite index, finite depth $II_1$-subfactor, and let $P_\bullet$ be the canonical associated subfactor planar algebra. Let $\Gamma$ be the principal graph of $N\subset M$, and let $G_\bullet$ be the bipartite graph planar algebra of $\Gamma$. Then there is an embedding of planar algebras $P_\bullet\to G_\bullet$.
\end{cor}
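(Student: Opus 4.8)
The plan is to combine Theorem \ref{embed} with the isomorphism theorem of Section \ref{sec:iso}, so the only genuine content is to recognize the intermediate inclusion as one whose Bratteli diagram is the principal graph. To avoid a clash with the notation of Theorem \ref{embed}, I would write $Q_\bullet$ for the subfactor planar algebra of $N\subset M$ (denoted $P_\bullet$ in the statement); it is a finite depth subfactor planar algebra of modulus $d=[M\colon N]^{1/2}$, with $Q_{0,+}=\C$.

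First I would feed $Q_\bullet$ into Theorem \ref{embed}: choose $r\geq 0$ minimal so that $Q_{2r,+}\subset Q_{2r+1,+}\subset (Q_{2r+2,+},e_{2r+1})$ is standard, set $s=2r$, $M_0=Q_{s,+}$, $M_1=Q_{s+1,+}$, and let $\widetilde{P}_\bullet$ be the canonical relative commutant planar algebra of the strongly Markov inclusion $M_0\subset M_1$ (Theorem \ref{canonical}). Theorem \ref{embed} then produces an inclusion of planar $*$-algebras $\Phi\colon Q_\bullet\hookrightarrow \widetilde{P}_\bullet$ given by adding strings on the left.

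Next I would identify the inclusion $M_0\subset M_1$. Since $Q_\bullet$ is a finite depth subfactor planar algebra with $Q_{0,+}=\C$, each box space $Q_{n,+}$ is a finite dimensional C$^*$-algebra, and for $s$ even and past the depth the inclusion $M_0=Q_{s,+}\subset Q_{s+1,+}=M_1$ is a connected unital inclusion of finite dimensional C$^*$-algebras, with the planar-algebra trace on $M_1$ restricting to that on $M_0$; this is the Markov trace. By the definition of the principal graph (cf.\ \cite{0808.0764}) together with the finite-depth stabilization of the Bratteli diagram of the tower of relative commutants, the Bratteli diagram of $M_0\subset M_1$ is exactly the principal graph $\Gamma$. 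I expect this identification to be the main point to verify: namely, that once we are past the depth the inclusion $Q_{s,+}\subset Q_{s+1,+}$ genuinely realizes $\Gamma$ and carries the Markov trace. Connectedness is immediate from $Q_{0,+}=\C$.

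Finally, I would invoke the isomorphism theorem of Section \ref{sec:iso}: since $M_0\subset M_1$ is a connected unital inclusion of finite dimensional C$^*$-algebras with the Markov trace and Bratteli diagram $\Gamma$, its canonical relative commutant planar $*$-algebra $\widetilde{P}_\bullet$ is isomorphic to the bipartite graph planar $*$-algebra $G_\bullet$ of $\Gamma$. Composing the inclusion $\Phi\colon Q_\bullet\hookrightarrow\widetilde{P}_\bullet$ with this isomorphism $\widetilde{P}_\bullet\xrightarrow{\ \sim\ }G_\bullet$ yields the desired embedding of planar algebras $P_\bullet\to G_\bullet$.
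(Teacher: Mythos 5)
Your proposal is correct and is precisely the argument the paper intends: the corollary is stated without proof because it is exactly the composition of Theorem \ref{embed} with the isomorphism theorem of Section \ref{sec:iso}, using the standard fact that past the depth the inclusion $Q_{s,+}\subset Q_{s+1,+}$ (for $s$ even) is a connected unital inclusion of finite dimensional C$^*$-algebras carrying the Markov trace whose Bratteli diagram is the principal graph. Your identification of that intermediate inclusion is the only point of substance, and you handle it as the paper's introduction outlines.
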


\bibliographystyle{amsalpha}
\bibliography{bibliography}
\end{document}